 \numberwithin{equation}{section}
  \numberwithin{theorem}{section}
  \def\nb{\nonumber}
\def \Vh0{\stackrel{\circ}{V}_h} 
\def\Om{\Omega}   
\newcommand{\q}{\quad}    \def\R{{\mathbb R}}
   \def\eps{\varepsilon}
\def\m{\mbox}
\newcommand{\lc}
{\mathrel{\raise2pt\hbox{${\mathop<\limits_{\raise1pt\hbox
{\mbox{$\sim$}}}}$}}}
\newcommand{\gc}
{\mathrel{\raise2pt\hbox{${\mathop>\limits_{\raise1pt\hbox{\mbox{$\sim$}}}}$}}}
\newcommand{\ec}
{\mathrel{\raise2pt\hbox{${\mathop=\limits_{\raise1pt\hbox{\mbox{$\sim$}}}}$}}}
\def\bb{\begin{equation}} \def\ee{\end{equation}}
\def\beqn{\begin{eqnarray}}  \def\eqn{\end{eqnarray}}
\def\beqnx{\begin{eqnarray*}} \def\eqnx{\end{eqnarray*}}
\newtheorem{assumption}[theorem]{Assumption}
\newtheorem{example}[theorem]{Example}
\newtheorem{remark}[theorem]{Remark}
\newtheorem{experiment}{Experiment}
\newcommand{\hmaxell}{h_{\ell}}
\newcommand{\hbarkr}{\overline{h}(k,r)} 
\newcommand{\Ccont}{C_{\mathrm{cont}}}
\newcommand{\Ccoer}{C_{\mathrm{coer}}}
\newcommand{\Cpou}{1}
\newcommand{\tO}{{\Omega}}
\newcommand{\Vhl}{\mathcal{V}^h_\ell}
\newcommand{\tVhl}{{\mathcal{V}}^h_\ell}
\newcommand{\hprob}{h_{\mathrm{prob}}}
\newcommand{\epsprob}{\abs_{\mathrm{prob}}}
\newcommand{\Hprec}{H_{\mathrm{prec}}}
\newcommand{\epsprec}{\abs_{\mathrm{prec}}}
\newcommand{\cH}{{\cal H}}
\newcommand{\cI}{{\mathcal I}}
\newcommand{\cT}{{\mathcal T}}
\newcommand{\cD}{{\mathcal D}}
\newcommand{\cV}{{\mathcal V}}
\newcommand{\cN}{{\mathcal N}}
\newcommand{\cO}{{\mathcal O}}
\newcommand{\bx}{\boldsymbol{x}}
\newcommand{\bV}{\mathbf{V}}
\newcommand{\bU}{\mathbf{U}}
\newcommand{\bW}{\mathbf{W}}
\newcommand{\bn}{\mathbf{n}}
\newcommand{\bF}{\mathbf{F}}
\newcommand{\tOmega}{{\Omega}}
\newcommand{\ri}{{\rm i}}
\newcommand{\rd}{{\rm d}}
\newcommand{\beq}{\begin{equation}}
\newcommand{\eeq}{\end{equation}}
\newcommand{\beqs}{\begin{equation*}}
\newcommand{\eeqs}{\end{equation*}}
\newcommand{\bit}{\begin{itemize}}
\newcommand{\eit}{\end{itemize}}
\newcommand{\ben}{\begin{enumerate}}
\newcommand{\een}{\end{enumerate}}
\newcommand{\bal}{\begin{align}}
\newcommand{\eal}{\end{align}}
\newcommand{\bals}{\begin{align*}}
\newcommand{\eals}{\end{align*}}
\newcommand{\bse}{\begin{subequations}}
\newcommand{\ese}{\end{subequations}}
\newcommand{\bpr}{\begin{proposition}}
\newcommand{\epr}{\end{proposition}}
\newcommand{\bre}{\begin{remark}}
\newcommand{\ere}{\end{remark}}
\newcommand{\bpf}{\begin{proof}}
\newcommand{\epf}{\end{proof}}
\newcommand{\ble}{\begin{lemma}}
\newcommand{\ele}{\end{lemma}}
\newcommand{\bco}{\begin{corollary}}
\newcommand{\eco}{\end{corollary}}
\newcommand{\bex}{\begin{example}}
\newcommand{\eex}{\end{example}}
\newcommand{\bth}{\begin{theorem}}
\newcommand{\enth}{\end{theorem}}
\newcommand{\Rea}{\mathbb{R}}
\newcommand{\supp}{\mathop{{\rm supp}}}
\newcommand{\rI}{\mathrm{I}}
\newcommand{\gu}{\nabla u}
\newcommand{\gv}{\nabla v}
\newcommand{\LtG}{L^2(\GammaN)}
\newcommand{\tendi}{\rightarrow \infty}
\newcommand{\matrixA}{A}
\def\XXint#1#2#3{{\setbox0=\hbox{$#1{#2#3}{\int}$}
     \vcenter{\hbox{$#2#3$}}\kern-.5\wd0}}
\newcommand{\bv}{\overline{v}}
\newcommand{\bgv}{\overline{\gv}}
\newcommand{\GammaN}{\Gamma_I}
\newcommand*{\N}[1]{\left\|#1\right\|}
\newcommand{\oOmega}{\overline{\Omega}}
\newcommand{\matrixS}{{ S}}
\newcommand{\matrixM}{{ M}}
\newcommand{\matrixN}{{ N}}
\definecolor{darkred}{RGB}{139,0,0}
\definecolor{darkgreen}{RGB}{0,100,0}
\definecolor{darkmagenta}{RGB}{139,0,139}
\definecolor{darkpurple}{RGB}{110,0,180}
\definecolor{darkblue}{RGB}{40,0,200}
\definecolor{myblue}{rgb}{0,0,0.6}
\newcommand{\tfa}{\text{ for all }}
\newcommand{\tfor}{\text{ for }}
\newcommand{\tand}{\text{ and }}
\newcommand{\noi}{\noindent}
\newcommand{\abs}{\eps}
\title{Domain  Decomposition with local impedance conditions for the Helmholtz equation with absorption
  \thanks{IGG  thanks the  
Department of  Mathematics at  
the Chinese University of Hong Kong for providing 
generous support and a stimulating research environment during 
his visits; he also thanks Paul Childs for first motivating him to study this problem. 
The authors thank Eero Vainikko (University of Tartu) for generously computing the  numerical experiments 
given in the paper.  We also thank  Eric Chung (Chinese University of Hong Kong) and Shihua Gong (University of Bath) for very  useful discussions. 
IGG acknowledges support from EPSRC grant EP/S003975/1,   EAS acknowledges support from EPSRC grant EP/R005591/1. and JZ acknowledges support from Hong Kong RGC General Research Fund (Project 14306718)
and NSFC/Hong Kong RGC Joint Research Scheme 2016/17 (N\_CUHK437/16).
We thank the anonymous referees for their   careful reading of the manuscript and perceptive comments  
which have improved the paper. }
}
\author{
I.~G.~Graham, 
\thanks{Department of Mathematical Sciences, University of Bath, Bath BA2
7AY,  UK \ \ 
}
  \and
E.~A.~Spence, 
\thanks{Department of Mathematical Sciences, University of Bath, Bath BA2 7AY,  UK \ \ 
}
\and 
  J.~Zou
  \thanks{Department of Mathematics, Chinese University of Hong Kong, Shatin,  N.T., Hong Kong\ \   
}
}
\begin{document}

\maketitle

\numberwithin{equation}{section}
\numberwithin{theorem}{section}
 
\begin{abstract}  We  consider one-level additive Schwarz preconditioners for a family of 
Helmholtz  problems  with absorption and   increasing   wavenumber $k$.   These problems are  discretized using the Galerkin method with    
nodal conforming  finite elements of any (fixed) order on  meshes with diameter $h = h(k)$,    
chosen  to maintain accuracy as $k$ increases. The action of the preconditioner requires solution of   independent (parallel) 
subproblems   (with impedance boundary conditions) on overlapping subdomains 
of diameter $H$ and overlap $\delta\leq H$. The solutions of these subproblems are linked together using    prolongation/restriction   
 operators defined using a  partition of unity; this formulation was previously proposed in [J.H. Kimn and M. Sarkis, {\em Comp. Meth. Appl. Mech. Engrg.}~196, 1507-1514, 2007]. In numerical experiments 
 (with $\delta \sim H$) for a model interior impedance problem, we observe robust (i.e.~$k-$independent) GMRES convergence as $k$ increases,
 with  $H\sim k^{-\alpha}$ and $\alpha \in [0,0.4]$  as  $k$ increases. This provides a highly-parallel, $k-$robust one-level domain decomposition method. We provide supporting theory by studying the preconditioner  applied to a range 
of absorptive problems, $k^2\mapsto k^2+ \ri \abs$, with absorption parameter $\abs$. 
Working in the Helmholtz ``energy''  inner product,  
and using the  underlying theory of  Helmholtz boundary-value problems,
we prove a $k-$independent  upper bound on the norm of the preconditioned 
matrix, valid for all   $\vert \abs\vert \lesssim  k^2$. 
We also prove a strictly-positive lower bound   
on the distance of the field of values of the preconditioned matrix from the origin which holds
when  $\eps/k$ is constant or 
growing arbitrarily slowly with $k$.  These results imply robustness of the preconditioner for the corresponding absorptive
problem as $k$ increases (given an appropriate choice of $H$). Since it is known that the absorptive problem provides a good preconditioner for the pure
Helmholtz problem when $\eps \sim k$,   our results  provide some theoretical support for
the observed robustness of  the preconditioner for the pure Helmholtz problem.
Since the subdomains used in  our preconditioner shrink only slowly  (relative to the fine grid size) as $k$ increases,
cheaper approximate  (two- or multi-level) versions of the preconditioner analysed here are important in practice  
and are reviewed here.
\end{abstract} 

 \begin{keywords}
 Helmholtz equation, high frequency, preconditioning, GMRES, domain decomposition, 
 subproblems with impedance conditions, robustness
 \end{keywords}

 \begin{AMS}
 65F08, 65F10, 65N55
 \end{AMS}

\section{Introduction} \label{sec:intro}
The efficient solution of the wave equation  
is of intense current  interest because of the equation's  many applications 
(in, e.g., computational medicine, underwater acoustics, earthquake modelling, and seismic imaging).  
This paper concerns efficient iterative methods  for computing conforming finite-element approximations of any fixed order  
of the  Helmholtz equation (i.e.~the wave equation in the frequency domain)  
in 2-d or 3-d.  We formulate  and analyse parallel preconditioners for use with GMRES  and provide theory indicating   
that our preconditioners  should remain 
effective as the wavenumber $k$  increases.  

As $k$ increases, there are several difficulties that make the Helmholtz problem hard,  both mathematically and numerically: (i)  the solution  becomes more oscillatory and,  in general,  meshes need to be 
increasingly  refined,  leading to huge linear systems with  dimension growing at least with  $\mathcal{O}(k^{d})$; (ii)     
the linear systems become more indefinite; (iii) many ``standard'' preconditioning techniques that are motivated by positive-definite problems become unusable in practice; (iv) there is relatively little rigorous  
theory for justifying effective preconditioning of such large and indefinite problems.
Regarding (i), we recall that \cite[Chapter 4]{Ih:98} shows that in the linear finite
  element method for  a 1D Helmholtz problem,  $h\sim k^{-3/2}$ is necessary to ensure a bounded relative error as $k$ increases; the extension of this result to higher dimensions is given in \cite{LaSpWu:19}.

Our analysis is   carried out for the model Helmholtz problem with absorption:   
\begin{equation}\label{eq:PDE} 
-\Delta u  - (k^2+ \ri \abs)u = f \ , \end{equation} 
on an open  bounded  
polygonal (for $d=2$) or Lipschitz polyhedral (for $d=3$) domain   
$\Omega\subset \Rea^d$, 
with mixed boundary conditions
\begin{equation}\label{eq:ImpBC} 
\frac{\partial u }{\partial n} - \ri \eta  u = g  \quad \text{on} \quad \Gamma_I,  \quad \text{and} \quad  u = 0 \quad \text{on} \quad \Gamma_D , 
\end{equation}
where the wavenumber   $k>0$,  and $\Gamma = \Gamma_I \cup \Gamma_D$ is the boundary of $\Omega$, partitioned into $\Gamma_I$ and $\Gamma_D$, where $\Gamma_I$ has positive surface  measure. In applications, $k = \omega/c$,  with  $\omega$  the angular frequency and $c$  the wave speed. 
Here we restrict to the case when $c$ is a positive constant.  We allow the {\em absorption parameter}  $\abs$ 
to be negative, zero or positive (with $\abs = 0$ corresponding to the ``pure Helmholtz'' case);  more details on $\abs$ and   $\eta$  are given   in \S \ref{sec:fem_and_ddm}. 

{In practical wave scattering problems, the PDE \eqref{eq:PDE} is commonly posed 
on the infinite domain exterior to a bounded scatterer, which is then truncated using an artificial boundary. 
The significance of the impedance boundary condition in \eqref{eq:ImpBC} is that (with $\eta=\sqrt{k^2 + \ri \abs}$) it is the simplest possible approximation to the Sommerfeld radiation condition. The problem \eqref{eq:PDE}, \eqref{eq:ImpBC} can therefore model acoustic scattering by 
a sound-soft scatterer. 
Also included in \eqref{eq:PDE}, \eqref{eq:ImpBC} is the {\em interior impedance problem}, where $\Gamma_D = \emptyset$, and $\Gamma_I$ is the boundary of  $\Omega$. We assume that if $\Gamma_D \not = \emptyset$ then the surface measure of $\Gamma_D$ is positive.

The standard variational formulation for \eqref{eq:PDE}, \eqref{eq:ImpBC}  is:
Given $f \in L^2(\Omega)$, $g\in \LtG$,  find $u \in H^1_{0,D}(\Omega)  : = \left\{v \in H^1(\Omega): v = 0 \ \text{on} \, \Gamma_D \right\}$,  such that 
\beq\label{eq:vp}
a_\abs(u,v) = F(v) \quad \text{ for all }\,  v \in H^1_{0,D}(\Omega),
\eeq
where 
\beq\label{eq:Helmholtzvf_intro}
a_\abs(u,v) := \int_\Omega \gu \cdot \bgv - (k^2 + \ri \abs) \int_\Omega u \bv - \ri \eta  \int_{\GammaN} u\bv \quad\tand\quad F(v) := \int_\Omega f\bv + \int_{\GammaN} g \bv;
\eeq
when $\abs = 0$ and $\eta = k$ we write $a$ instead of $a_\abs$.
We approximate  \eqref{eq:vp} using the Galerkin method in a  conforming finite-element space $\cV^h \subset H^1_{0,D}(\Omega)$  (consisting of continuous piecewise polynomials of arbitrary fixed order), 
on a shape-regular mesh $\cT^h$ with mesh diameter $h$ (assumed to resolve the interface $\overline{\Gamma_I}\cap \overline{\Gamma_D}$ when this is non-empty and  points on the interface  are treated as Dirichlet points). This  yields  the linear system 
 \beq\label{eq:discrete}
 \matrixA_\abs \bU  \ := \ (\matrixS - (k^2+ \ri \abs)  \matrixM - \ri \eta \matrixN) \bU \ =\  \bF , 
 \eeq   
where $\bU$ is the vector of nodal values of the finite-element approximation $u_h \approx u$,  
$\matrixS$ is the stiffness matrix for the negative Laplace operator, $\matrixM$ is the 
domain mass matrix and $\matrixN$ is the boundary mass 
matrix (corresponding, respectively, to each of the terms in $a_\eps(u,v)$ in \eqref{eq:Helmholtzvf_intro}, and described in more detail 
in \S\ref{subsec:fem}).  $A_\abs $ is  large, sparse, and indefinite.  
When $\abs = 0$ and $\eta = k$ we write $A$ instead of $A_\abs$.
In common with many other investigations in the literature, we consider the situation where, following Point (i) on the previous page, $h$ is chosen  as a function of $k$ to maintain accuracy as $k$ increases (see Remark \ref{rem:accuracy} below for more details).

One way to understand the essential difficulty in 
preconditioning $A$ (as $k$ increases)  is to recall  
that the 
fundamental solution
of the  operator  in 
\eqref{eq:PDE} with $\eps=0$ (in three dimensions)   
is $ G(x, y) = \exp(\ri k  r)/r$, where  $r = \vert x - y\vert$,  
with    $\vert \cdot \vert$ denoting  the Euclidean norm,
and so 
a good preconditioner for \eqref{eq:PDE} with $\eps=0$  should,  roughly speaking,  approximate the  integral 
operator with kernel  $G$.
When $k = 0$  this operator is ``data-sparse'', since     the $j$th derivative of $G$ decays 
with order  $\mathcal{O}(r^{-(j+1)})$,  when  $x$ and $y$ are well-separated. Thus,  a  source in a given region is only felt weakly far 
away, a   fact that  underlies many successful preconditioners for 
Laplace-like  problems (e.g. multigrid, domain decomposition, or $\cH$-matrices). However,  when $k$ is large,     
the $j$th derivative of $G$ decays with the  much slower rate    $\mathcal{O}(k^{j} r^{-1})$, 
and the application of Laplace-like preconditioning strategies becomes problematic.
While directional clustering methods (see, e.g., \cite{EnYi:07}, \cite{BoMe:17} and the references therein) have been developed for  homogeneous Helmholtz  problems,  formulated using  boundary integral equations,
domain-based methods such as those considered here remain of great importance, due to  their
applicability  to general problems with sources and heterogeneities.   


Introducing absorption, $\abs \not = 0$, has the effect of improving the decay of the Green's function.
While absorptive problems do appear in applications (and our results here cover  these), our deeper motivation 
for including  $\abs $  is that it has proved useful for both constructing and providing the theory for 
preconditioners for  the case $\abs = 0$.  In    \cite{GaGrSp:15} it was proved 
(subject to certain natural conditions on  $\Omega$,    $h$ and $\abs$), that  
there is a constant $K$, independent of $h$, $k$, and $\abs$, such that  
\begin{align} \label{eq:GGS} \Vert I - A_\abs^{-1} A \Vert_2 \ \leq\  K \frac{|\abs|}{k}\ .
\end{align} 
Thus the  left-hand side of \eqref{eq:GGS} can  then be   made small  by choosing  $\abs$ to be a small-enough 
multiple of $k$.   However    $A_\abs^{-1}$ is not  
a practical preconditioner for $A$, and we therefore replace  it by a  approximation 
$B_\abs^{-1} \approx A_\abs^{-1}$. 
Using the classical results about GMRES in \cite{EiElSc:83}, we  say  that  
$B_\abs^{-1}$ is a {\em good preconditioner}  for $A$ if both (i) the matrix $B^{-1}_\abs A$ has 
Euclidean norm   bounded above, and (ii) the field of values  (in the Euclidean norm) 
of $B^{-1}_\abs A$ is bounded away from the origin, with both bounds independent of  $k$ and $\abs$. 
If both (i) and (ii) are satisfied then, by \cite{EiElSc:83}, GMRES for $B_{\eps}^{-1}A$ will converge in a number of iterations independent of $k$ and $\abs$.  

In order to characterize good choices of    $B_\abs^{-1}$,  we can write   
\begin{align} \label{eq:pert}  
B_\abs^{-1}A \ = \ B_\eps^{-1} A_\abs \ -\ B_\abs^{-1} A_\abs (I - A_\abs^{-1} A).
\end{align} 
Then \eqref{eq:GGS} combined with   \eqref{eq:pert} 
suggests that $B_\abs^{-1}$ will be a good preconditioner for $A$ provided that
 \begin{align}\label{eq:c}  \
B_{\abs}^{-1}  \ \text{is a good preconditioner for} \  A_{\abs}   \text{ when } {|}\abs{|}=ck, \text{ with $c$ sufficiently small.}
\end{align}
(An argument making this statement  rigorous is given in Appendix \S\ref{sec:app1}).


\subsection{The novel  results of the paper}
\label{subsec:main} 

We  give a new and rigorous proof that  $B_\abs^{-1}$ is a good preconditioner for $A_\abs$ when $B_\abs^{-1}$  is a
  simple  additive Schwarz preconditoner,
  constructed by solving  independent  (local) Helmholtz impedance subproblems on overlapping subdomains of $\Omega$, 
  linked  by   prolongation/restriction operators defined via a partition of unity (see  \S \ref{subsec:prec}).

   Theorem \ref{thm:main_intro} gives
general estimates for the norm and the distance of the   field of values  from the origin of
 $B_\abs^{-1}A_\abs$,  under the general 
assumption that the local solvers are sufficiently good approximate inverses for the localised global problem (assumption \eqref{localstar}). The estimates are explicit in the wavenumber $k$, the fine mesh diameter $h$, the number of overlaps $\Lambda$, the subdomain diameter $H$, the overlap size  $\delta$ and the absorption parameter $\abs$. Corollaries \ref{cor:upper_bound} and \ref{cor:lower_bound} then provide more concrete estimates
under additional conditions on  $H$,  $\delta$ and  $\abs$.   In particular:
\begin{itemize}
  \item[(1)] Corollary
\ref{cor:upper_bound}  provides conditions under which 
the norm of $B_\abs^{-1}A_\abs$  is uniformly bounded from above for all $0 \leq \vert \eps\vert  \leq k^2$
\item[(2)] Corollary \ref{cor:lower_bound} provides conditions under which the field of values of $B_\abs^{-1}A_\abs$
  is uniformly bounded away from the origin.  These hold (for appropriate $H = H(k)$), 
  when  ${|}\abs{|} \sim k^{1+\beta}$, with $\beta$ arbitrarily close to $0$ or when  
  ${|}\abs{|} = Ck$, for some large enough  constant $C$.
\end{itemize}
 Although  both the latter requirement
   in (2) and the requirement in \eqref{eq:c} suggest we should take  $\eps$ proportional to $k$, the required constants $C,c$
    are not explicitly known and so a rigorous lower bound   on the field of values can not be deduced in the   pure Helmholtz case. Nevertheless  numerical experiments in   \S \ref{sec:Numerical} still suggest that $B_0^{-1} $ is  a good preconditioner for the pure Helmholtz problem, for certain choices of $H(k)$, decreasing as $k \rightarrow \infty$.

Important features of the  results of Theorem \ref{thm:main_intro} and Corollaries \ref{cor:upper_bound} and \ref{cor:lower_bound} are that (a) they hold for
bounded polygonal or Lipschitz polyhedral domains and  cover sound-soft scattering problems, truncated 
using  first order absorbing boundary conditions; 
(b) the theory allows  finite element methods of any fixed order on shape-regular meshes;
and  general  shape-regular subdomains; (c) the  proof constitutes
a substantial extension of classical Schwarz theory to the non-self-adjoint case; 
 (d) via a duality argument,  the theory covers both left- and right-preconditioning simultaneously.

  To  achieve the  goal of a highly-parallel and provably $\cO(n)$
  solver for the Helmholtz equation as $k$ increases, one would need:  
\bit
\item[(i)] a $k$-independent (i.e.~$\cO(1)$) number of iterations, 
\item[(ii)] the action of the preconditioner to be $\cO(n)$, and 
\item[(iii)] (roughly speaking) the preconditioner to be as parallel as possible. 
\eit
Since we propose here a one-level additive Schwarz method,  (iii) is achieved.
The main achievement of our paper   
is  fundamental theory obtaining conditions under which (i) is achieved for Schwarz methods  
even without global coarse solver.
 As can be seen from the experiments in \S \ref{sec:Numerical},
the  subdomain size $H$ needed to  ensure 
robustness can shrink to zero as $k$ increases, but remains large
relative to the fine grid size $h$.  Further work is needed to achieve requirement (ii).  However, in \S \ref{sub:pract} we briefly discuss
  the cost of the subdomain problems,  together with ways of reducing this cost (some of which have been recently implemented and tested  \cite{GrSpVa:17,GrSpVa:17a,BoDoGrSpTo:17a,BoDoGrSpTo:17b,BoDoGrSpTo:17c}).
  Relatively large subdomain problems are also encountered in sweeping-style preconditioners,
  although in this case, they are typically posed on ``quasi $(d-1)$ dimensional'' slices of an original $d-$dimensional domain,
 and efficient direct solvers have been developed for these (e.g. \cite{PoEnLiYi:13}).


Finally, we note that it is perhaps  remarkable that this  one-level additive Schwarz method
can be robust when the subdomain size $H \rightarrow 0$. 
This conflicts with standard intuition and existing understanding,    
even for self-adjoint coercive PDEs; there, if $H \rightarrow 0$,  
the condition number of the one-level preconditioned problem grows like $\cO((\delta H)^{-1})$.
In the Helmholtz case, however, we are solving a family of problems parametrized by $k$. 
Even though the problem itself becomes ``harder" as $k$ increases, the results of this paper show that the one-level preconditioner can still remain 
robust.

\subsection{The preconditioner}
\label{subsec:prec} 

Our algorithm is a variation of the simple  one-level additive 
Schwarz method and is   based on a set of open polyhedral    
subdomains   $\{\Omega_\ell\}_{\ell = 1}^N$,  
forming  an overlapping cover of $\Omega$.
We assume that each $\overline{\Omega_\ell}$  
is non-empty and is  a union of  elements of the mesh $\cT^h$. 
The key component of the  preconditioner for \eqref{eq:discrete} is the solution of discrete ``local''  
versions of \eqref{eq:PDE}: 
\begin{equation}\label{eq:PDElocal} 
-\Delta u  - (k^2+ \ri \abs)u = f  \quad \text{on} \quad \Omega_\ell ,   \end{equation} 
subject to  boundary conditions
\begin{align} \label{eq:impBClocal} \       \frac{\partial u }{\partial n} - \ri \eta  u = 0  \quad \text{on} \quad \partial \Omega_\ell 
\backslash \Gamma_D \ \ \ \text{(assumed non-empty)}, \quad \text{and} \quad  u = 0 \quad \text{on} \quad \partial \Omega_\ell 
\cap \Gamma_D  .
\end{align}
We assume that if $\partial \Omega_\ell 
  \cap \Gamma_D  \not = \emptyset$,  then it has positive surface measure. Because $\Omega_\ell$ consists
  of a union of fine grid elements,  $\partial \Omega_\ell 
  \cap \Gamma_D $ 
  then contains at least one fine grid element.

To connect these local problems, we use a partition of unity $\{\chi_\ell\}_{\ell = 1}^N$ with properties 
  \begin{align}
  \text{for each} \ \ell: \   \chi_\ell: \oOmega \rightarrow \R, \quad \supp \chi_\ell \subseteq {\oOmega_\ell} \quad  \text{and} \quad 
  0 \leq  \chi_\ell(\bx) \leq 1, \ \  \text{when } \bx \in \oOmega,   
  \label{POUstar}
\end{align}
and such that
$$ \sum_\ell  \chi_\ell (\bx)  \ = \ 1 \ \quad
\text{for all } \  \bx \in \overline{\Omega}. $$ (Here we define  $\supp \chi_\ell :=  \{ \bx \in \oOmega: \chi_\ell(\bx) \not =0\}$.) 

The finite-element space $\cV^h\subset H_{0,D}^1(\Omega)$ underlying \eqref{eq:discrete} is assumed to have a nodal basis so that   each  $v_h \in \cV^h$  
is uniquely determined by its values $\{V_p : = v_h (\bx_p), \ p \in \cI^h\}$,   at nodes   $\{\bx_p: p \in \cI^h\} \subset \overline{\Omega} $  (where $\cI^h$ is a suitable  index set).  Nodes on the subdomain  $\overline{\Omega_\ell}$ 
are denoted $ \{ \bx_p: p \in \cI^h(\overline{\Omega_\ell})\} $.    
Using this notation, we can define a restriction matrix  $R_\ell$ that uses $\chi_\ell$ to map  
a nodal vector defined on $\overline{\Omega}$ to a nodal vector on $\overline{\Omega_\ell}$:     
\beq 
\label{eq:explicit}  
(R_\ell \bV)_p \ = \ \chi_\ell(\bx_p) V_p , \quad p \in \cI^h(\overline{\Omega_\ell}) . 
\eeq
We denote by $A_{\abs,\ell}$ the matrix obtained by 
approximating \eqref{eq:PDElocal} and \eqref{eq:impBClocal} in  $\cV^h$ (restricted to $\overline{\Omega_\ell}$); this matrix is a local analogue of the matrix  $A_\abs$ in \eqref{eq:discrete}.
Our preconditioner for $A_\abs$ is then simply: 
\begin{equation} \label{eq:ASpc} 
B_\abs^{-1}  :=  \sum_{\ell = 1}^N 
R_\ell^\top (A_{\abs,\ell})^{-1} R_\ell  \ ,
\end{equation}
where  $R_\ell^\top $ is the   transpose of $R_\ell$.  
Hence the action of $B_\eps^{-1}$  consists of $N$ parallel ``local impedance solves''  added up with the aid of  
appropriate restrictions/prolongations.  
$B_\abs^{-1}$ 
coincides with the {\em ``OBDD-H"}  preconditioner proposed (without theory) by
Kimn and Sarkis in \cite{KiSa:07} 
(also called SORAS -- 
  see, e.g., \cite[\S7.7.2]{DoJoNa:15}.)

In the analysis we use
  the $k-$dependent inner product and norm:  
\begin{align}\label{eq:wip}
  \langle \bV ,\bW \rangle_{D_k}:=  \bW^*D_k \bV, \quad \Vert \bV \Vert_{D_k} =  \langle \bV ,\bV \rangle_{D_k}^{1/2} , \quad \text{where} \quad D_k = (S+k^2 M). 
\end{align} 
In fact, $D_k$ is the  stiffness matrix arising from approximating via the Galerkin method in  $\cV^h$ the Helmholtz energy norm
\begin{align} \label{eq:HelmE} 
\Vert v \Vert_{1,k} := (v,v)_{1,k}^{1/2} \ \quad \text{where} \quad (v,w)_{1,k} \ := \ (\nabla v, \nabla w)_{L^2(\Omega)} + k^2 (v,w)_{L^2(\Omega)}.  
\end{align}
When $\widetilde{\Omega}$ is any subdomain of $\Omega$ we write $(\cdot, \cdot)_{1,k,\widetilde{\Omega}}$ and $\Vert \cdot \Vert_{1,k,\widetilde{\Omega}}$ for the corresponding inner product and norm on $\widetilde{\Omega}$.

\subsection{Related literature}
\label{subsec:related}  
There have been  two important recent  ideas that have had a large effect on the field of iterative solvers for the Helmholtz equation.  
The first is the ``shifted Laplace''  preconditioner, arising from initial ideas by   in \cite{BaGoTu:83} and   
\cite{LaGi:02}, and then developed and advocated in \cite{ErVuOo:04, ErOoVu:06,  VaErVu:07}. 
Since the  fundamental solution of \eqref{eq:PDE}  enjoys ``Laplace-like'' decay when $\abs$ is large enough, the  ``shifted Laplace'' preconditioner 
 uses  a multigrid approximation  of the absorptive problem to  precondition the ``pure Helmholtz''  problem $\abs = 0$.

The second  concerns   a class of multiplicative domain decomposition methods that fall 
under the general heading of ``sweeping'', e.g.~\cite{EnYi:11b, EnYi:11c, EnYi:11a, St:13,ChXi:13, PoEnLiYi:13, ZeDe:16, GaZh:16}. 
Restricting to a simple context, suppose  \eqref{eq:PDE} is  discretized on a tensor product grid
on  the unit square   and the unkowns are ordered lexicographically, yielding a 
block tridiagonal system matrix, each block corresponding to a row of nodes.
Sweeping methods can be thought of as    
approximate block-elimination methods for this system.
The Schur complement that  arises in  the block-elimination at a given line corresponds to
the solution of a Helmholtz problem in the domain below that line,
and  these problems  can be suitably truncated, to thinner strips  
either by ``moving {perfectly-matched layer} (PML)'' or $\mathcal{H}-$matrix approximation.
  The polarized trace algorithm \cite{ZeDe:16} takes this idea a step further
by pre-computing and compressing the solution operators on each strip, expediting the online process.

  
Both these ideas  have led to computation of challenging industrial strength applications, 
but neither of them have a rigorous theory.
For  ``sweeping'', 
the underpinning physical principle   applies only to  rectangular 2-d domains and tensor-product  discretizations 
(since the relevant low-rank result \cite{MaRo:07}  does not hold
 for  general domains and discretizations \cite{EnZh:18}), 
 and to the elimination of nodes in blocks, each consisting  of a small number of rows. Although
 the overarching principle of sweeping methods is  serial, there have been considerable innovations to enhance parallel efficiency. For example  
\cite{PoEnLiYi:13, ZeDe:16} propose recursive subdivision of the inner solves in each multiplicative 
sweeping step. Very recently \cite{taus2019sweeps} proposed the ``L-sweeps''  algorithm in which information propagates in
a 90-degree cone, allowing checkerboard domain decomposition.  In  \cite{LeJu:19} an overlapping domain decomposition solver
is proposed, with independent  subdomain solves at each step.

On the other hand the ``shifted Laplace'' algorithm    
is not in general robust with respect to $k$, 
since the  choice ${|}\abs{|} \sim k^2 $,
which is needed to make multigrid work \cite{CoGa:17},
turns out to be too large a perturbation of the pure Helmholtz problem to remain robust as $k \rightarrow \infty$.
Although recent enhancements based on deflation     \cite{ShLaVu:13, ShLaRaNaVu:16, ErRaNa:17,DwVu:18}
have greatly improved  the shifted Laplace preconditioner, a full theory is still missing.
A recent survey  of  shifted Laplace and related preconditioners is given in \cite{LaTaVu:17}.

Domain decomposition methods offer the attractive feature that their coarse grid and local 
problems can be adapted to allow for ``wave-like'' behaviour. 
There is a large 
literature (mostly empirical) on this (see, e.g., \cite{BeDe:97,Fa:00,GaMaNa:02, KiSa:07,KiSa:11, HuPeSc:13, HuSc:14, GaZh:16a}). A recent example is \cite{LuXu:19},  which proposes a multiplicative overlapping domain decompositon method
  as  a smoother in a multigrid algorithm for Helmholtz problems discretized by the continuous interior penalty method.      However there is no rigorous theory when $k$ is large,  for methods with \emph{either} many subdomains of general shape \emph{or} coarse grids.
The paper \cite{GrSpVa:17} provided the first such rigorous analysis for the problem with absorption, but the bounds for ${|}\abs{|}\ll k^2$ in \cite{GrSpVa:17} were very pessimistic.
The current paper extends this line of research to the case when wave-like components are inserted into the domain decomposition method. The results we obtain for the one-level method (i.e.~with no coarse {solver}) with impedance boundary conditions on the subdomains give practical bounds for much lower levels of absorption than in \cite{GrSpVa:17}.

Finally, we remark that  domain decomposition methods (with and without global coarse solver)
  for the  case when  $k$ is fixed and $h \rightarrow 0$ are in principle analysed in  Cai and Widlund \cite{CaWi:92}, since for small enough $h$ the Laplacian becomes the
dominant term in the discrete Helmholtz equation. However the current paper concentrates instead
on analysis for the more challenging case that allows   $k \rightarrow \infty$. 

  \subsection{Cost of the Preconditioner}\label{sub:pract}
  Here we discuss the cost of the preconditioner \eqref{eq:ASpc},
along with its possible approximations. We also  give a brief
  comparison with other preconditioners.    
The action of \eqref{eq:ASpc}
  requires the solution of $\cO(H^{-d})$ subproblems each of size
  $\cO((H/h)^d)$, with $d$ being the physical dimension.
  If $h \sim k^{-\gamma}$ with $\gamma \geq 1$ and $H \sim k^{-\alpha}$, with $0<\alpha < \gamma$,
  then the dimension of the global system grows  quickly with $k$, having dimension $n \sim k^{\gamma d}$.
The action of the preconditioner then requires the solution of  $\cO(k^{\alpha d})$ subproblems, each of size
$\cO(k^{(\gamma - \alpha )d})$. Since  $k \sim n^{1/\gamma d}$, this is equivalent to
\begin{align} \label{comp1} 
\cO(n^{\alpha/\gamma})\quad  \text{independent subproblems, each of size } \quad \cO(n^{(1- \alpha/\gamma)}).  
\end{align}

In the case $\alpha = 0.5$ (seen in Table \ref{tab:4} in \S\ref{sec:Numerical} to have an iteration count growing slowly with   $k$)
and $\gamma = 1.5$ (needed for accuracy of linear elements), the preconditioner has
\begin{align} \label{comp2} 
\cO(n^{1/3})\quad  \text{independent subproblems, each of size } \quad \cO(n^{2/3}).  
\end{align}

In the case $\alpha = 0.5$ and $\gamma = 1$ (e.g. a fixed number of grid points per wavelength, commonly used in practice and reasonable for higher order methods), the preconditioner has
\begin{align} \label{comp3} 
\cO(n^{1/2}) \quad  \text{ independent subproblems, each of size } \quad \cO(n^{1/2}) . 
\end{align} 
These subproblem sizes are comparable to those arising  from the (very successful) sweeping preconditioners, although (as pointed out above) the systems arising in sweeping are on thin rectangular subdomains and hence have beneficial special structure. In sweeping methods,  an
approximate inverse of $A$ is computed by an approximate $LDL^\top$ factorization. In the moving PML variant
(formulated  for a cubic domain with tensor product grid and appropriate boundary conditions)
one solves sequentially $\cO(n^{1/d})$ subproblems (on slices of the domain),  each of dimension
 $\cO(n^{(1-1/d)})$. 
When $d = 3$ this  coincides with  \eqref{comp2} and when $d=2$
 it coincides with  \eqref{comp3}.
The sweeping method in its basic format \cite{EnYi:11c} is multiplicative, whereas our preconditioner is fundamentally additive. On the other
hand sweeping provides an approximate inverse of the Helmholtz operator, while our aim here is only  to provide
 a good preconditioner (a somewhat weaker requirement). As a result,  our method is applicable in much more general geometrical situations.

Several other practical implementations of the  preconditioner analysed here have been tested.
For example,  
\cite{BoDoGrSpTo:17a}  reduced the subproblem size and added a coarse grid solver to reduce iteration count.
Although,  now not completely robust as $k$ (and hence $n$) increases, a slow  growth of iteration count of about
$\cO(n^{0.1})$ for 3D Helmholtz problems of size up to $n = 10^7$ was observed.
A similar method was  used for 3D Maxwell systems in \cite{BoDoGrSpTo:17b,BoDoGrSpTo:17c},
where good parallel performance was reported on systems of size up to
1 billion. Here the fact that absorption is added into the preconditioner turns out to be advantageous in
practice, since  the absorptive coarse grid problem can  be quickly solved with an inner iterative method and does not dominate the overall  cost.

Another approach  to reduce the cost of the preconditioner is to observe that the local \linebreak impedance solves are
local copies of the original problem (but on smaller domains and hence with  smaller effective wavenumber). This
allows them to be quickly resolved by an (inner) preconditioned GMRES combined with the same preconditioner. A
preliminary (serial) implementation of this method is given in \cite[\S 5.2.2]{GrSpVa:17a} where, on a 2D domain of size $\cO(1)$ the outer preconditioner was formulated on subdomains of size $\cO(k^{-0.4})$ and the inner preconditioner on subdomains of size $\cO(k^{-0.8})$. This was implemented on a fine discretization with
10 grid points per wavelength, in which only very small  problems of size $\cO((k^{-0/8}/ k^{-1})^2) = \cO(k^{0.4})$
had to be  solved directly. Results for $k $ up to $300$ are given in  \cite[\S 5.2.2]{GrSpVa:17a}, showing very low
inner iteration counts and outer iteration counts growing slowly ($\sim \cO(n^{0.2})$). For $k = 300$,  direct solvers were needed
for systems of size only  few hundred.  
The idea of recursive subdivision of subdomains also features heavily in efficient versions of sweeping
\cite{LiYi:16a}, and also in the polarized trace algorithm \cite{ZeDe:16}.

\subsection{Structure of the paper} 
In \S\S \ref{sec:2.1}, \ref{subsec:fem} we provide key   
estimates for the  local impedance solution operator at the continuous (PDE) level, 
and its  discretization. The properties of the preconditioner are established via its
interpretation as a sum of projections; this is set up  in \S \ref{sec:POU}.   
We prove the main results in  \S \ref{sec:Theory}  and present  
 numerical experiments in \S \ref{sec:Numerical}. In Appendix \ref{sec:app1} we give a rigorous basis for the discussion around \eqref{eq:c}. 

\section{Preliminaries}
\label{sec:fem_and_ddm} 

Throughout  we write $a\lesssim b$ when there exists a $C>0$, independent of all parameters of interest 
(here $\abs, k, h,H, \delta$, $\Lambda$, and  $\ell$ - with some of these defined later),  such that $a\leq Cb$. We write $a\sim b$ if $a\lesssim b$ and $b\lesssim a$. We make the following basic assumptions on $ k, \abs$ and $\eta$ throughout the paper. 
\begin{assumption} \label{ass:basic} 
The parameters $k$, $\abs$ and $\eta$ satisfy 
\begin{align} \label{eq:abs_limit} 
k \gtrsim 1, \quad 0 \leq \vert \abs\vert \leq   k^2 , \quad \text{and} \quad \vert \eta \vert  \sim k . 
\end{align} 
\end{assumption}

We recall the inequalities (valid for all $a,b > 0 $ and $\epsilon>0$), 
 \beq\label{eq:Cauchy}
 2ab\leq \frac{a^2}{\epsilon} + \epsilon b^2, \quad \text{and} \quad 
 \frac{1}{\sqrt{2}}(a+b) \leq \sqrt{a^2+b^2} \leq a+b . 
 \eeq

\subsection{A priori estimates}\label{sec:2.1}

The basic well-posedness of \eqref{eq:vp} is classical:

\begin{proposition}\label{prop:eu}
If either (i)\  $\abs>0$ and  $\Re(\eta)>0$, or (ii) \  $\abs<0$ and  $\Re(\eta)<0$,  
or (iii) \ $\abs=0$,   $\Re(\eta)\neq 0$,
the problem   
 \eqref{eq:Helmholtzvf_intro} has a unique solution. 
\end{proposition}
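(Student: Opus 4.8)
The plan is to invoke the Fredholm alternative. First I would check that $a_\abs$ is a bounded sesquilinear form on $H^1_{0,D}(\Omega)$: by Cauchy--Schwarz, the trace inequality $\N{v}_{\LtG}\lesssim\N{v}_{\HoO}$, and the bounds $|\eta|\sim k$, $|k^2+\ri\abs|\le 2k^2$ from Assumption~\ref{ass:basic}, one gets $|a_\abs(u,v)|\lesssim(1+k^2)\N{u}_{\HoO}\N{v}_{\HoO}$. Next I would write $a_\abs=b+c$ with the coercive part $b(u,v):=(\gu,\gv)_{\LtO}+(u,v)_{\LtO}$ (the full $H^1$-inner product, which is coercive on $H^1_{0,D}(\Omega)$ regardless of whether $\Gamma_D$ is empty) and the lower-order part $c(u,v):=-(1+k^2+\ri\abs)(u,v)_{\LtO}-\ri\eta(u,v)_{\LtG}$. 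Since $H^1(\Omega)\hookrightarrow\LtO$ and the trace map $H^1(\Omega)\to\LtG$ are compact (Rellich, and compactness of the trace on a bounded Lipschitz domain), the operator $T$ on $H^1_{0,D}(\Omega)$ defined by $b(Tu,v)=a_\abs(u,v)$ has the form $I+(\text{compact})$, hence is Fredholm of index zero. Therefore existence of a solution to \eqref{eq:vp} follows once uniqueness is established.

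For uniqueness I would set $f=g=0$, test \eqref{eq:vp} with $v=u$, and take imaginary parts in $a_\abs(u,u)=0$; using $\ImN(-\ri\eta)=-\ReN(\eta)$ this gives
\begin{equation*}
\abs\,\N{u}_{\LtO}^2+\ReN(\eta)\,\N{u}_{\LtG}^2=0.
\end{equation*}
In case~(i) both summands are nonnegative and in case~(ii) both are nonpositive, so in either case both vanish; since $\abs\neq0$, this forces $\N{u}_{\LtO}=0$, hence $u=0$. In case~(iii), $\abs=0$ leaves $\ReN(\eta)\N{u}_{\LtG}^2=0$, so $u=0$ on $\Gamma_I$, and the impedance condition in \eqref{eq:ImpBC} then gives $\partial u/\partial n=\ri\eta u=0$ on $\Gamma_I$ as well. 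Thus $u$ has vanishing Cauchy data on $\Gamma_I$, which (being a subset of $\partial\Omega$ of positive surface measure, made up of flat pieces of faces) contains a relatively open, flat portion; extending $u$ by zero across that portion yields an $H^1$ function solving $\Delta u+k^2u=0$ on a strictly larger domain and vanishing on an open set, so unique continuation for the Helmholtz operator gives $u\equiv0$.

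The estimates in the first step are entirely routine. The one genuinely non-elementary ingredient is the unique-continuation argument in case~(iii): unlike cases~(i)--(ii), which close by a purely algebraic sign argument once the imaginary part is taken, case~(iii) cannot be handled by energy identities alone and must appeal to a unique continuation principle for $-\Delta-k^2$. This is exactly why the proposition is stated as ``classical''; a self-contained treatment of all three cases can be assembled from standard references on Helmholtz boundary-value problems (cf.\ the literature cited in \S\ref{subsec:related}).
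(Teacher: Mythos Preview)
Your proposal is correct and follows essentially the same route as the paper's sketch proof: uniqueness in cases (i)--(ii) by taking the imaginary part of $a_\abs(u,u)=0$, uniqueness in case (iii) via the unique continuation principle, and existence in all cases from the Fredholm alternative via a G\aa rding inequality. You have simply fleshed out the details (the explicit compact-perturbation decomposition and the reflection-across-a-flat-face argument for unique continuation) that the paper leaves to references.
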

\bpf[Sketch proof]
For cases (i) and (ii),   uniqueness  can be   established  by {taking} $v = u$ {and $F=0$ in the weak form \eqref{eq:vp}} and {then} taking the imaginary part to show that $u = 0$.  Case (iii) is the standard ``pure Helmholtz'' case; uniqueness can be obtained by the unique continuation principle (e.g.~\cite[{Remark 8.1.1}]{Me:95}, {\cite[\S3]{GrSa:20}}). 
  Existence then follows for all cases via the Fredholm alternative,  since $a_\abs$ satisfies a G\v{a}rding inequality.
\epf 
 
In the domain decomposition method below  we will be  interested in local impedance solves on subdomains that may shrink 
in diameter as $k \rightarrow \infty$. For this reason we introduce the following.
  
\begin{definition}[Characteristic length scale]\label{def:cls}
 A domain  has \emph{characteristic length scale}  $L$ if its diameter $\sim L$, its surface area  $\sim L^{d-1}$, and its volume $\sim L^d$.
\end{definition}

\ble[Continuity and coercivity of the sesquilinear form $a_\abs$]\label{lem:cont_coer} 
\

\noi (i) Assume that $\Omega$ has characteristic length scale $L$  and that $\abs$ and $\eta$ satisfy \eqref{eq:abs_limit}. Then the sesquilinear form $a_\abs$ is continuous, i.e.
\beqs
\vert a_\abs(u,v)\vert \leq  \Ccont \Vert u \Vert_{1,k} \Vert v \Vert_{1,k}, \quad \text{ with }\quad
\Ccont \lesssim 
\left(1+ (kL)^{-1}\right),  \quad \text{for all} \ \  u,v \in H^1(\Omega).
\eeqs
\noi (ii) Let  $\sqrt{k^2 + \ri \abs}$ be defined via the square root with the branch cut on the positive real axis. If $\eta$ satisfies
\beq\label{eq:eta}
\Re \big( \eta\overline{\sqrt{k^2 + \ri \abs}}\big)\geq 0,
\eeq
 then  $a_\abs$ is coercive, i.e. 
\beqs
\vert a_\abs(v,v)\vert \ \gtrsim \ \Ccoer \Vert v \Vert_{1,k}^2, \quad\text{ with } \quad \Ccoer\sim \frac{|\abs|}{k^2},   \quad \text{for all} \ \  v \in H^1(\Omega).
\eeqs
\ele

\bpf
The assertion (ii) is Lemma 2.4 in \cite{GrSpVa:17} (note that the omitted constants in that result do not depend on $L$). The assertion (i) follows from the Cauchy-Schwarz inequality and  the multiplicative trace inequality, 
$
\N{v}^2_{L^2(\Gamma)} 
\lesssim \left( \frac{1}{L}\N{v}^2_{L^2(\Omega)} + \N{\gv}_{L^2(\Omega)}\N{v}_{L^2(\Omega)}\right)
$, 
(see, e.g., \cite[Last equation on p. 41]{Gr:85})
and the inequalities \eqref{eq:Cauchy}.
\epf

\begin{remark}[Adjoint coercivity]\label{rem:adj}
The definition of $\sqrt{k^2 + \ri \abs}$ implies that when $\eta$ is chosen to satisfy \eqref{eq:eta}, the coercivity constant for $a_\abs$ is exactly the same as the coercivity constant for the sesquilinear form for the adjoint problem obtained by 
replacing  $\abs$  by $-\abs$ and $\eta$  by  $ -\eta$.
\end{remark} 

\begin{definition}\label{def:25star}
    A Lipschitz open set $D$ is called \emph{starshaped with respect to a ball} if there exists 
    a point
  $\bx_0 \in D$ and a  $\gamma>0$ such that the position vector of any point $\bx \in D$ satisfies
$(\bx-\bx_0) \cdot \bn(\bx) \geq \gamma$  when  the normal vector $\bn(\bx)$ is defined;
see, e.g., \cite[Lemma 5.4.1]{Mo:11}.
\end{definition} 

\begin{theorem}[A priori bound on solution of \eqref{eq:vp}]
\label{thm:IIP_M}
Let $\Omega$ be starshaped with respect to a ball and have characteristic length scale $L$, and recall that 
we have assumed that $\Gamma_I$ has positive measure. Let $u$ be \emph{either} the solution to \eqref{eq:vp} with $f\in L^2(D)$ and $g=0$, \emph{or} the solution to the adjoint problem under the same assumptions on $f$ and $g$. Then, there exists $C_1,C_2$ (independent of $k,\abs,\eta,$ and $L$) such that
\beq\label{eq:3}
\N{u}_{1,k} \leq C_1L \N{f}_{L^2(\Omega)}, 
\quad \text{provided that} \quad 
\frac{|\abs| L}{k} \leq C_2.
\eeq
\end{theorem}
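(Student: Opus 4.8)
\emph{Approach.} The plan is to prove this by the Morawetz--Rellich multiplier method, following the argument used for the interior impedance problem in \cite{GrSpVa:17} (which adapts the $\abs=0$ bounds of Melenk and of Cummings--Feng to the absorptive case). It suffices to treat the forward problem, since the adjoint problem corresponds to the substitution $(\abs,\eta)\mapsto(-\abs,-\eta)$, and by Remark~\ref{rem:adj} together with the form of the estimates below every bound is insensitive to these sign changes. I argue as if $u$ were smooth, the general Lipschitz case being recovered by the usual density/regularisation procedure (approximating $\Omega$ from inside by smooth star-shaped domains, or mollifying $u$); verifying this is one of the technical points. I present the argument for $\GammaN=\partial\Omega$ (the interior impedance problem), which is the setting in which the theorem is used; if $\Gamma_D\neq\emptyset$, the Dirichlet part of $\partial\Omega$ contributes an extra integral $\int_{\Gamma_D}(\bx\cdot\bn)\,|\partial u/\partial n|^2$ requiring additional geometric hypotheses.

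\emph{Step 1 (boundary trace bound).} Take $v=u$ in \eqref{eq:vp} (recall $g=0$) and compare imaginary parts; this gives $\abs\,\N{u}_{\LtO}^2+\Re(\eta)\,\N{u}_{\LtG}^2=-\Im\int_\Omega f\,\overline{u}$, whence, using $\Re(\eta)\gtrsim k$ (which holds under the hypotheses on $\eta$ in force here; for the canonical $\eta=\sqrt{k^2+\ri\abs}$ one has $\Re\eta\sim k$ since $|\arg(k^2+\ri\abs)|\le\pi/4$),
\[
k\,\N{u}_{\LtG}^2\ \lesssim\ |\abs|\,\N{u}_{\LtO}^2+\N{f}_{\LtO}\,\N{u}_{\LtO}.
\]
\emph{Step 2 (Rellich identity).} Multiply $-\Delta u-(k^2+\ri\abs)u=f$ by $\overline{\cM u}$ with $\cM u:=\bx\cdot\nabla u+\tfrac{d-1}{2}u$, take twice the real part, and integrate over $\Omega$; integration by parts yields
\[
\N{u}_{1,k}^2\ =\ \int_{\partial\Omega}\mathbf{W}\cdot\bn\,\rd s\ +\ 2\Re\!\int_\Omega\overline{\cM u}\,f\ +\ 2\Re\!\int_\Omega\overline{\cM u}\,(\ri\abs u),
\]
where $\mathbf{W}$ is the usual Rellich boundary vector field and the coefficient $\tfrac{d-1}{2}$ is chosen precisely so that the interior contribution of $\cM u$ reproduces $\N{\nabla u}_{\LtO}^2+k^2\N{u}_{\LtO}^2=\N{u}_{1,k}^2$.

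\emph{Step 3 (estimate and close).} On $\partial\Omega$, substitute $\partial u/\partial n=\ri\eta u$ and split $\nabla u$ into normal and tangential parts. Star-shapedness with respect to a ball supplies both $\bx\cdot\bn\ge\gamma\gtrsim L$ (so the tangential cross-term is absorbed, via the weighted Cauchy--Schwarz inequality in \eqref{eq:Cauchy}, into the favourable term $-(\bx\cdot\bn)|\nabla_{\GammaN}u|^2$) and $|\bx-(\bx\cdot\bn)\bn|\lesssim L$; the boundary integral then reduces to $\int_{\partial\Omega}\mathbf{W}\cdot\bn\lesssim Lk^2\N{u}_{\LtG}^2$, which by Step~1 is $\lesssim\tfrac{|\abs|L}{k}\N{u}_{1,k}^2+L\N{f}_{\LtO}\N{u}_{1,k}$. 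For the volume terms, $\N{\cM u}_{\LtO}\lesssim L\N{u}_{1,k}$ (using $\N{\nabla u}_{\LtO}\le\N{u}_{1,k}$, $\N{u}_{\LtO}\le k^{-1}\N{u}_{1,k}$ and $kL\gtrsim1$; this last condition may be assumed, as it holds in all the applications — the subdomains to which the theorem is applied satisfy $kL\gtrsim1$), so the two volume integrals are $\lesssim L\N{u}_{1,k}\N{f}_{\LtO}$ and $\lesssim\tfrac{|\abs|L}{k}\N{u}_{1,k}^2$. Collecting, there are absolute constants $C_3,C_4$ with $\N{u}_{1,k}^2\le C_3\tfrac{|\abs|L}{k}\N{u}_{1,k}^2+C_4L\N{u}_{1,k}\N{f}_{\LtO}$; applying \eqref{eq:Cauchy} to the last term and imposing $|\abs|L/k\le C_2:=(4C_3)^{-1}$ lets the two $\N{u}_{1,k}^2$-terms on the right be absorbed into the left, giving $\N{u}_{1,k}\le C_1L\N{f}_{\LtO}$ with $C_1$ absolute.

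\emph{Main obstacle.} I expect the hard part to be the boundary bookkeeping of Step~3 together with its rigorous justification on a Lipschitz polygonal/polyhedral domain: it is precisely here that star-shapedness \emph{with respect to a ball}, rather than mere star-shapedness, is used, in order to secure the uniform lower bound $\bx\cdot\bn\ge\gamma\gtrsim L$ that controls the tangential cross-term; and, since $u$ need not lie in $H^2(\Omega)$, the Rellich identity and the interpretation of its boundary integrand must be obtained by a limiting argument. The absorption hypothesis $|\abs|L/k\le C_2$ plays no role in the geometry and is needed only to absorb the two $|\abs|$-dependent contributions (from $2\Re\int_\Omega\overline{\cM u}(\ri\abs u)$ and from the $|\abs|$-term in the trace bound of Step~1).
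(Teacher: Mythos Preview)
Your approach is essentially the same as the paper's: the paper's proof is a citation to \cite[Theorem 2.9 and Remark 2.5]{GaGrSp:15}, with brief remarks on how the $L$-dependence emerges from tracking the parameter $R\sim L$ there; the Morawetz--Rellich multiplier argument you sketch \emph{is} the proof underlying that reference, so you have simply written out what the paper delegates.

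Two small corrections are worth noting. First, your hedge about $\Gamma_D\neq\emptyset$ requiring ``additional geometric hypotheses'' is too cautious: on $\Gamma_D$ one has $u=0$, hence $\nabla u=(\partial_n u)\,\bn$ and $\bx\cdot\nabla u=(\bx\cdot\bn)\partial_n u$, so the Rellich boundary contribution there reduces to $-(\bx\cdot\bn)\,|\partial_n u|^2\le0$ by star-shapedness, and can simply be discarded. No further assumption is needed, and the theorem \emph{is} applied in the paper to subdomains $\Omega_\ell$ that may touch $\Gamma_D$ (see Theorem~\ref{thm:estimate}(ii)). Second, the extra hypothesis $kL\gtrsim1$ you introduce to control the zeroth-order part $\tfrac{d-1}{2}u$ of the multiplier is not stated in the theorem. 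It can be removed: either drop that term from $\cM$ and instead combine the resulting identity (which yields $(d-2)\|\nabla u\|^2+dk^2\|u\|^2$ on the left) with a multiple of the real part of $a_\abs(u,u)=F(u)$ to reconstitute $\|u\|_{1,k}^2$, or follow the parameter choices the paper indicates ($\delta_3=1/(2R)$, $\delta_4\sim k^2$) in the cited proof. Either way the bound \eqref{eq:3} holds without assuming $kL\gtrsim1$.
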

\bpf 
This result is essentially given by \cite[Theorem 2.9 and Remark 2.5]{GaGrSp:15}, except the dependence of the constants on $L$ is not kept track of there. To see that the condition 
${|}\abs{|}/k\leq c$ in \cite[Theorem 2.9]{GaGrSp:15} is really the right-hand inequality in \eqref{eq:3}, one needs to examine the  argument near  the end of the proof of \cite[Theorem 2.9]{GaGrSp:15}  (just before Remark 2.16) and observe that $R \,(:= \sup_{\bx\in \Omega}|\bx|) \sim L$. 
To see why the bound \eqref{eq:3} has the  factor of $L$ on the right-hand side, observe that choosing 
$\delta_3 = 1/(2R)$ and $\delta_4 \sim k^2$ in the proof of   \cite[Theorem 2.9]{GaGrSp:15} means that, 
in \cite[(2.29)]{GaGrSp:15},   the factor multiplying  $\N{f}^2_{L^2(\Omega)}$   is $\sim L^2$. (The $L$-explicit bound \eqref{eq:3} in the case $\abs=0$ is also obtained in \cite[Remark 3.6]{MoSp:14}.)
\epf

For simplicity, in the rest of the paper we assume that either $\eta={\rm sign}(\abs)k$ or $\eta= \sqrt{k^2+ \ri \abs}$; observe that both these choices satisfy
 the requirements on $\eta$ in \eqref{eq:abs_limit}, the conditions for uniqueness of the solution of \eqref{eq:vp} 
in Proposition \ref{prop:eu}, and the more-restrictive condition for coercivity \eqref{eq:eta} (see \cite[Remark 2.5]{GrSpVa:17}).

\subsection{Finite element method and subproblems} 
\label{subsec:fem} 
Let 
 $\cT^h$ be  a family of
conforming simplicial   meshes 
that are shape regular as the  
mesh diameter  $h \rightarrow 0$.  
A typical element of $\cT^h$ is written $\tau \in \cT^h$ and is considered as a closed subset of
$\overline{\Omega}$. 
Our approximation space $\cV^h$ is then the space of all continuous 
functions on $\Omega$ that  are polynomial of  
(total) degree $r-1$ with $r \geq 2$ (when restricted to any $\tau$) and  vanish on $\Gamma_D$. 
We assume we have a nodal basis for this space (for example the standard Lagrange basis), i.e.~with nodes  
$\cN^h = \{\bx_q:q \in \cI^h\}$, where $\cI^h$ is  a suitable index
set and corresponding basis  
$\{ \phi_p : p \in \cI^h \}$ with $\phi_p(\bx_q) = \delta_{p,q}$. 
For any continuous function $g$ on $\overline{\Omega}$,  we  introduce the standard 
nodal interpolation operator 
$\Pi^h g = \sum_{p \in \cI^h} g(\bx_p) \phi_p \ $. 
and we  assume 
the standard  error estimate (e.g. \cite[\S3.1]{Ci:78}):
\begin{align}
\Vert  (I - \Pi^h) v \Vert_{L^2(\tau)} + h \vert  (I - \Pi^h) v \vert_{H^1(\tau)} \ \leq \ C h^r \vert v \vert_{H^r(\tau)} ,  \quad \text{for all} \ v \in H^r(\Omega),    \label{eq:Ciarlet}
\end{align} 
for each $\tau \in \cT^h$, with $C$ indepedent of $\tau$,  provided $v \in H^r(\tau)$. 
The Galerkin approximation of \eqref{eq:vp} in the space 
$\cV^h$ is equivalent to the  linear system \eqref{eq:discrete}  
where  $F_\ell := \int_{\Omega} f \phi_\ell + \int_{\Gamma_I} g \phi_\ell$, and 
\begin{equation}\label{eq:matrices}
\matrixS_{\ell,m} = \int_{\Omega} \nabla \phi_\ell \cdot \nabla
\phi_m , \quad   \matrixM_{\ell,m} = \int_{\Omega}
\phi_\ell  \phi_m , \quad     
\matrixN_{\ell,m} = \int_{\Gamma} \phi_\ell  \phi_m, \quad  
\ell, m \in \cI^h\ . \end{equation}

We assume that the subdomains $\Omega_\ell$  introduced in \S \ref{subsec:prec}, 
 are 
Lipschitz polyhedra (polygons in 2-d) that are \emph{shape regular with parameter $H_\ell$} in the sense that 
each $\Omega_\ell$ has characteristic length scale $H_\ell$,  and we set $H = \max_\ell H_\ell$.
 In our analysis  we allow $H$ to depend on $k$ in such a way that $H$ could approach $0$ as $k\tendi$.
Some of the results below require  that each $\Omega_\ell$ is starshaped with respect to a ball, 
with the corresponding parameters $\gamma = \gamma_\ell$ in Definition \ref{def:25star} satisfying
$\gamma_\ell \geq \gamma_* > 0 $ for all $\ell$.
We 
describe this property by saying that the  $\Omega_\ell$ are 
\emph{starshaped  with respect to a ball, uniformly in $\ell$}.

Concerning the overlap, for each $\ell = 1, \ldots , N$, 
let $\mathring{\Omega}_\ell$ denote the
part of $\Omega_\ell$ that is not overlapped by any other subdomains. (Note that  $\mathring{\Omega}_\ell = \emptyset$ is possible.) 
For $\mu>0$ let $\Omega_{\ell, \mu}$ denote the set of points in
$\Omega_\ell$, every element of which is  a distance no more than $\mu$ from the interior 
boundary $\partial \Omega_\ell\backslash \Gamma $ . Then we assume that there exist constants  $0<\delta_\ell \lesssim H$ and $0<b<1$ such that, for each $\ell = 1, \ldots , N$,  
  \begin{equation}
  \label{eq:unifoverlap}\Omega_{\ell, b \delta_\ell }\subset \Omega_\ell\backslash
  \mathring{\Omega}_\ell \subset \Omega_{\ell,\delta_\ell };
\end{equation}
The case when $\delta_\ell \geq c H_\ell$  for some constant $c$ independent of  $\ell$ is called \emph{generous} overlap.
In Figure \ref{fig:subdomains} we depict a typical subdomain, with its  parts which are  overlapped by its neighbours and its (possibly) non-overlapped part. 

\begin{figure}[h]
\psfrag{Oi}{$\Omega_{\ell}$}
\psfrag{Oi0}{$\mathring{\Omega}_{\ell}$}
\psfrag{Oid}{$\Omega_{\ell, c \delta_\ell}$}
\psfrag{di}{$\delta_\ell$}
\psfrag{cdi}{$c \delta_\ell$}
\centerline{\includegraphics[scale=0.5]{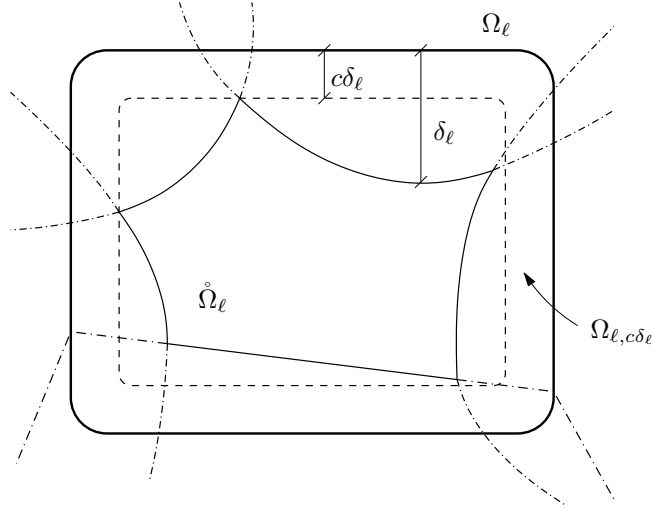}}
\caption{\label{fig:subdomains} The overlap parameter $\delta_\ell$, the 
``interior'' $\mathring{\Omega}_\ell$ and the ``near-boundary subset'' 
$\Omega_{\ell, c \delta_\ell}$ for a particular example of a 
subdomain $\Omega_\ell$, being overlapped by its neighbours.}
\end{figure}

We introduce the parameter
\begin{align} \label{eq:op} \delta : = \min_{\ell = 1, \ldots , N} \delta _\ell.  \end{align}

We  make the {\em finite-overlap assumption}: There exists a finite $\Lambda > 1$ independent of $N$ such that 
\begin{equation} 
\Lambda \ = \ \max \big\{ \#
  \Lambda(\ell): \ell = 1, \ldots , N\big\},    \quad \text{where} \quad   
\Lambda(\ell) = \big\{ \ell' :  \overline{\Omega_\ell} \cap \overline{\Omega_{\ell'}} \not =
\emptyset \big\} \ .   \label{eq:finoverlap}
\end{equation}       
It follows immediately from  \eqref{eq:finoverlap}  that, for all $v\in L^2(\Omega)$,
\beq\label{eq:finoverEuan1}
\sum_{\ell=1}^N \N{v}^2_{L^2(\Omega_\ell)} \leq \Lambda \N{v}^2_{L^2(\Omega)} \ \text{ and } \
\sum_{\ell=1}^N \N{v}^2_{1,k,\Omega_\ell} \leq \Lambda \N{v}^2_{1,k},  \  \text{when} \ \  v\in H^1(\Omega).
\eeq

For each $\ell$,  
we  introduce 
the space of finite-element functions on   
$\overline{\Omega_\ell}$ given by 
$\Vhl := \{v_h\vert_{\overline{\Omega_\ell}}: v_h \in \cV^h \} \ $. 
Recalling  that functions in $\cV^h$ vanish on the (outer)  Dirichlet boundary $\Gamma_D$,    
functions in $\cV^h_\ell$ also vanish on $\partial \Omega_\ell\cap \Gamma_D$ (which contains at least one element if it is non-empty),  but are otherwise
unconstrained.
The   local impedance sesquilinear form on $\Omega_\ell$ is 
\begin{equation}
\label{eq:localimp} 
a_{\abs,\ell}(v,w) \ := \ \int_{\Omega_\ell} \Big(\nabla v\cdot  \nabla \overline{w} - (k^2 + \ri \abs)  v \overline{w} \Big) \ - \ \ri \eta  \int_{   \partial \tOmega_\ell  \backslash \Gamma_D }  v \overline{w} \ , 
\end{equation}
for $v, w \in H^1_D(\tOmega_\ell) := \{ z \in H^1(\Omega_\ell): z = 0 \
  \text{on} \ \partial \Omega_\ell\cap \Gamma_D\}$.  For general \ $F_\ell \in (H^1(\Omega_\ell))'$, the continuous local impedance problem is: find $u_\ell \in H^1_D(\Omega_\ell)$ such that 
\beq\label{eq:Euan1} 
a_{\abs,\ell} (u_\ell , v_\ell) \ =\ F_\ell (v_\ell), \quad \text{for all} \quad v_\ell \in H^1_D(\Omega_\ell); 
\eeq
this problem is well-posed by  Proposition \ref{prop:eu} and its
finite-element approximation  is: find $u_{h,\ell} \in \cV^h_\ell$  such that
\begin{align} \label{eq:felocal}
a_{\abs,\ell} (u_{h,\ell} , v_{h,\ell}) \ =\ F_\ell (v_{h,\ell}), \quad \text{for all} \quad v_{h,\ell}  \in \cV^h_\ell . 
\end{align} 
The system matrix arising from \eqref{eq:felocal} is
$\big(A_{\abs,\ell}\big)_{i,j} := a_{\abs,\ell}(\phi_j,\phi_i) \quad \tfor i,j \in \cI^h(\overline{\Omega_\ell}).$

\begin{theorem}[Bounds on the solutions of the local problems \eqref{eq:felocal}] \label{thm:estimate} 

(i) For all $|\abs| > 0$, and  for any mesh size $h$,  
\eqref{eq:felocal} has a 
unique solution $u_{h,\ell}$ which satisfies
\begin{align} \label{eq:FEest1}
\Vert u_{h,\ell} \Vert_{1,k, \tO_\ell} \ \lesssim \ 
\Theta(\abs,H_\ell,k)
 \,  \max_{v_h \in \cV_\ell^h}  
\left( \frac
{\vert F(v_h) \vert}{ \Vert v_{ h} \Vert_{1,k,\tO_\ell} }\ \right),
\end{align} 
with
\beq\label{eq:Theta1}
\Theta(\abs,H_\ell,k)= k^2/\vert \abs\vert.
\eeq

(ii) If each $\Omega_\ell$ is starshaped with respect to a ball uniformly in $\ell$, then
for all $|\abs|\geq 0$, there exists a mesh threshold function $\hbarkr$  
such that when  $h \leq \hbarkr$,   \eqref{eq:felocal} has a unique solution $u_{h,\ell}$ which satisfies     
\eqref{eq:FEest1} with 
\begin{align}\label{eq:Theta2}
\Theta(\abs,H_\ell,k) = \min\left\{(1+kH_\ell), {k^2}/{\vert \abs\vert}\right\} \ , 
\end{align}
where we adopt the convention that $\Theta(0,H,k) = 1+Hk$.
\end{theorem}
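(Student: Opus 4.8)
The plan is to obtain part~(i) from coercivity and part~(ii) by combining part~(i) with a duality (Aubin--Nitsche/Schatz) argument built on the a priori bound of Theorem~\ref{thm:IIP_M}. Write $\N{F}_{*,\ell} := \max_{v_h\in\Vhl}\big(|F(v_h)|/\N{v_h}_{1,k,\Omega_\ell}\big)$ for the quantity on the right-hand side of \eqref{eq:FEest1}. For part~(i): with $\eta$ as fixed in the paragraph after Theorem~\ref{thm:IIP_M}, condition \eqref{eq:eta} holds, so Lemma~\ref{lem:cont_coer}(ii) gives $|a_{\abs,\ell}(v,v)| \gtrsim (|\abs|/k^2)\N{v}_{1,k,\Omega_\ell}^2$ for all $v\in H^1_D(\Omega_\ell)$, hence for all $v\in\Vhl$. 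This at once makes $A_{\abs,\ell}$ invertible for every $h$, and testing \eqref{eq:felocal} with $v_{h,\ell}=u_{h,\ell}$ gives $(|\abs|/k^2)\N{u_{h,\ell}}_{1,k,\Omega_\ell}^2 \lesssim |a_{\abs,\ell}(u_{h,\ell},u_{h,\ell})| = |F(u_{h,\ell})| \le \N{u_{h,\ell}}_{1,k,\Omega_\ell}\,\N{F}_{*,\ell}$, which is \eqref{eq:FEest1} with \eqref{eq:Theta1}.

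For part~(ii) I first reduce to the case of small absorption. A scaling $\Omega_\ell\mapsto H_\ell^{-1}\Omega_\ell$, $k\mapsto kH_\ell$, $\abs\mapsto H_\ell^2\abs$ leaves the claimed bound invariant and, using the uniform starshapedness hypothesis, lets me assume $H_\ell\sim 1$ with all constants uniform in $\ell$. Let $C_2$ be the constant of Theorem~\ref{thm:IIP_M}. If $|\abs|H_\ell/k > C_2$, then $k^2/|\abs| < kH_\ell/C_2 \lesssim 1+kH_\ell$, so part~(i) already yields \eqref{eq:FEest1} with right-hand side $\lesssim \Theta\,\N{F}_{*,\ell}$ for $\Theta$ as in \eqref{eq:Theta2}, with no restriction on $h$; likewise, if $|\abs|>0$ and $k^2/|\abs|\le 1+kH_\ell$, then part~(i) gives $\Theta = k^2/|\abs| = \min\{1+kH_\ell, k^2/|\abs|\}$ directly. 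It therefore remains to treat $|\abs|H_\ell/k\le C_2$ (which in particular covers $\abs=0$), where the minimum in \eqref{eq:Theta2} equals $1+kH_\ell$ in every sub-case not already handled, and to prove \eqref{eq:FEest1} with $\Theta = 1+kH_\ell$.

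In this regime Theorem~\ref{thm:IIP_M} applies on $\Omega_\ell$, both for the problem and for its adjoint, with continuous stability constant $\lesssim H_\ell$: for every $\psi\in L^2(\Omega_\ell)$ there is $z_\psi\in H^1_D(\Omega_\ell)$ with $a_{\abs,\ell}(v,z_\psi)=(v,\psi)_{L^2(\Omega_\ell)}$ for all $v\in H^1_D(\Omega_\ell)$ and $\N{z_\psi}_{1,k,\Omega_\ell}\lesssim H_\ell\N{\psi}_{L^2(\Omega_\ell)}$. Given $w_h\in\Vhl$, set $G(v):=a_{\abs,\ell}(w_h,v)$ and let $\N{G}_{*,\ell}$ be the corresponding dual norm; the goal is the discrete inf--sup bound $\N{w_h}_{1,k,\Omega_\ell}\lesssim(1+kH_\ell)\N{G}_{*,\ell}$, which gives invertibility of $A_{\abs,\ell}$ and the asserted estimate. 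From a G\r{a}rding inequality for $a_{\abs,\ell}$ (namely $\N{v}_{1,k,\Omega_\ell}^2 \lesssim \Re a_{\abs,\ell}(v,v) + (k^2 + H_\ell^{-2})\N{v}_{L^2(\Omega_\ell)}^2$, where the multiplicative trace inequality is used to absorb the impedance term when $\Im\eta<0$, using that $|\Im\eta|\lesssim H_\ell^{-1}$ in the present regime) we get $\N{w_h}_{1,k,\Omega_\ell}^2 \lesssim \N{w_h}_{1,k,\Omega_\ell}\N{G}_{*,\ell} + (k^2+H_\ell^{-2})\N{w_h}_{L^2(\Omega_\ell)}^2$. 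The $L^2$-term is handled by duality: taking $\psi=w_h$, $z=z_{w_h}$ and any $z_h\in\Vhl$, one has $\N{w_h}_{L^2(\Omega_\ell)}^2 = a_{\abs,\ell}(w_h,z) = a_{\abs,\ell}(w_h,z-z_h) + G(z_h)$, whence, using the analogue of Lemma~\ref{lem:cont_coer}(i) on $\Omega_\ell$ (continuity constant $\Ccont\lesssim 1+(kH_\ell)^{-1}$), the bound $\N{z}_{1,k,\Omega_\ell}\lesssim H_\ell\N{w_h}_{L^2(\Omega_\ell)}$, and a choice of $z_h$ as a (quasi-)interpolant of $z$ with $\N{z-z_h}_{1,k,\Omega_\ell}\le\theta(h,k)\N{w_h}_{L^2(\Omega_\ell)}$, one arrives at $\N{w_h}_{L^2(\Omega_\ell)}\lesssim \Ccont\,\theta(h,k)\N{w_h}_{1,k,\Omega_\ell} + (H_\ell+\theta(h,k))\N{G}_{*,\ell}$. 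Inserting this and applying Young's inequality, and defining $\hbarkr$ as a lower bound for the largest $h$ for which $(k+H_\ell^{-1})\Ccont\,\theta(h,k)$ and $\theta(h,k)/H_\ell$ lie below fixed small thresholds, the $\N{w_h}_{1,k,\Omega_\ell}^2$ terms on the right are absorbed into the left, leaving $\N{w_h}_{1,k,\Omega_\ell}^2 \lesssim (1+k^2H_\ell^2)\N{G}_{*,\ell}^2 \lesssim (1+kH_\ell)^2\N{G}_{*,\ell}^2$, as required; taking $w_h=u_{h,\ell}$ (so $\N{G}_{*,\ell}=\N{F}_{*,\ell}$) then yields \eqref{eq:FEest1} with \eqref{eq:Theta2}.

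The one genuinely delicate ingredient is the quantitative approximation estimate $\N{z-z_h}_{1,k,\Omega_\ell}\le\theta(h,k)\N{w_h}_{L^2(\Omega_\ell)}$ with $\theta(h,k)\to 0$ as $h\to 0$: the Helmholtz adjoint solution $z$ with $L^2$ data has only finite Sobolev regularity on the merely Lipschitz starshaped polyhedron $\Omega_\ell$. As in the standard $hp$-Helmholtz analysis, this is dealt with by splitting $z$ into a part of limited regularity whose norm is controlled $k$-explicitly and an ``analytic'' (highly oscillatory) part whose degree-$(r-1)$ interpolation error carries the favourable power $(hk)^{r-1}$; combining these with \eqref{eq:Ciarlet} (and a Scott--Zhang quasi-interpolant for the low-regularity part when nodal interpolation is inadmissible in three dimensions), and tracking all constants together with $\Ccont\lesssim 1+(kH_\ell)^{-1}$, is what fixes the explicit form of $\hbarkr$. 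Everything else is the Cauchy--Schwarz/Young bookkeeping sketched above, plus the elementary observation that after the scaling the parameter dependence matches the minimum in \eqref{eq:Theta2}.
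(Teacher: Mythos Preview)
Your proposal is correct and follows essentially the same route as the paper. Part~(i) is identical (coercivity from Lemma~\ref{lem:cont_coer}(ii), i.e.\ Lax--Milgram). For part~(ii), both you and the paper split into the regime $|\abs|H_\ell/k\le C_2$, where Theorem~\ref{thm:IIP_M} feeds a Schatz-type duality argument yielding the discrete inf--sup constant $\sim(1+kH_\ell)^{-1}$, and the complementary regime, where the bound from part~(i) already gives the minimum in \eqref{eq:Theta2}. The only differences are cosmetic: the paper simply cites \cite[Theorem~4.2]{MeSa:10} for the Schatz step and the definition of $\hbarkr$, whereas you spell out the G\r{a}rding/Aubin--Nitsche computation (including the careful handling of the boundary term when $\Im\eta<0$) and the Melenk--Sauter regularity splitting; and your initial scaling to $H_\ell\sim1$ is not used in the paper and is in any case not needed, since you track the $H_\ell$-dependence explicitly afterwards anyway.
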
    

\begin{proof} The result (i) is a consequence of  
Lemma \ref{lem:cont_coer} and the Lax-Milgram lemma. 
The result (ii) follows from the fact (used in the case of Helmholtz problems by the authors of 
\cite{MeSa:10, MeSa:11} and their associated work) that when a sesquilinear form satisfies a G\aa rding inequality and the solution of the variational problem is unique, a ``Schatz-type'' argument obtains quasi-optimality under conditions on the approximability of the adjoint problem, and then the G\aa rding inequality can be used to verify a discrete inf-sup condition.  Indeed, following the proof of \cite[Theorem 4.2]{MeSa:10} and using the bound \eqref{eq:3} and the fact that $\Omega_\ell$ has characteristic length scale $ H_\ell$, we find that,  when $\vert \abs \vert H_\ell/k \leq C_2$, 
\begin{align} \label{eq:infsup}  \inf_{0 \not = v_h \in \cV_\ell^h} \, \sup_{0 \not = w_h \in \cV_\ell^h} 
\frac{\vert a_{\abs,\ell}(v_h, w_h)\vert}{\Vert v_h \Vert_{1,k} \Vert w_h \Vert_{1,k}}\ \ \geq \ 
\frac{1}{2 + \Ccont^{-1} + C_1 kH_\ell}.
\end{align}    
Then, from \eqref{eq:felocal},
 \beq\label{eq:Euan3}
 {\Vert u_{h,\ell} \Vert_{1,k,\Omega_\ell}} \ \lesssim\  {(1+ k H_\ell)}   \, \sup_{0 \not = v_h \in \cV_\ell^h} 
\frac{\vert F(v_h)\vert}{ \Vert v_h \Vert_{1,k,\Omega_\ell}},  
\eeq   
when $\vert \abs\vert  H_\ell /k \leq C_2$.
If  $\vert \abs \vert  H_\ell/k >  C_2$,  then $1 + H_\ell k > C_2 k^2/{|}\abs{|}$
and 
\eqref{eq:Theta2} follows from \eqref{eq:FEest1}.
\end{proof}

\begin{remark}[The mesh-threshold function $\hbarkr$]\label{rem:accuracy} 
Bounds on $\hbarkr$ are discussed  in detail in \cite[\S\S 5.1.2 and 5.2]{MeSa:11}.  For 2-d  
polygonal domains,   $k(hk/(r-1))^{r-1}$ is required to be sufficiently   small (see \cite[Equation 5.13]{MeSa:11}), equivalently $h$ being a sufficiently small multiple of $(r-1)k^{-(r/(r-1))} $. Therefore, when $r=2$ we require   
 $hk^2$ small, but the requirement relaxes as $r$ increases.   
 In 1-d, numerical experiments indicate that the requirement $hk^2$ sufficiently small  is necessary for quasioptimality 
 \cite[Figures 7-9]{IhBa:95}, \cite[\S4.5.4 and Figure 4.12]{Ih:98}.  The theoretical benefit of requiring $h \leq \hbarkr$ is that the estimate \eqref{eq:FEest1} holds uniformly over all choices of overlapping star-shaped  subdomains
   $\Omega_\ell$, each of which   has characteristic length $H_\ell$. However, to our knowledge, the requirement $h\sim k^{-2}$ is never imposed in practical  computations.       

  If one is only concerned with ensuring solvability,  a weaker requirement on $h$ arises.
   In 1-d, the relative error in both the $H^1$-semi-norm and the $L^2$-norm is bounded independently of $k$ if $hk^{3/2}$ is sufficiently small \cite[Equation 3.25]{IhBa:95}, \cite[Equation 4.5.15]{Ih:98}, with numerical experiments indicating that this is sharp \cite[Figure 11]{IhBa:95}, \cite[Figure 4.13]{Ih:98}.
Numerical experiments in \cite[\S3]{BaGoTu:85} showed that, at least for certain 2-d problems, the relative error in the $L^2$-norm is bounded independently of $k$
if  $hk^{3/2}$ is sufficiently small; this fact has recently been proved in \cite{LaSpWu:19}.
In \cite{DuWu:15},  under certain regularity assumptions,  it has been proven (in 2d and 3d) that,  if  $h^{2(r-1)} k^{2r-1} $  is small enough,  then
  the $H^1$ error is of 
  order $h^{2(r-1)} k^{2r-1} $.  Thus,  e.g., when $r = 2$, taking   $h \lesssim k^{-3/2}$  ensures
  that the
  problem is solvable and the error remains bounded as $ k$ increases.  This discussion is all for domains of diameter $\cO(1)$; for subdomains of decreasing diameter $\cO(H_\ell)$, the effective wavenumber is reduced to  $\cO(kH_\ell)$, and so the requirement on $h$ is even weaker.  
\end{remark} 

\subsection{Projection operators} \label{sec:POU} We now give more detail about the
    partition of unity $\{\chi_\ell\}$ and the restiction and prolongation matrices $R_\ell, R_\ell^\top$
discussed  in \S \ref{subsec:prec}. 
Note that since the subdomains are assumed to be unions of fine grid elements, their boundaries (and the boundaries of their supports) are fine-grid dependent. This is standard for domain decomposition methods (e.g. 
\cite[p. 57]{ToWi:05}).   
We choose the functions $\chi_\ell$ to be continuous piecewise linear on the mesh $\cT^h$, satisfying     
\begin{equation}  
\label{eq:derivpou} \Vert  \nabla \chi_\ell  \ \Vert_{\infty, \tau} 
\ \lesssim \ \delta_\ell ^{-1}, \quad \text{for all } \quad \tau \in \cT_h,   \ , 
\end{equation}
where the hidden constant is also required to be   independent   of the element  $\tau$. A partition of unity
satisfying this condition  is explicitly constructed in \cite[\S3.2]{ToWi:05}. 

We will use the operator $\Pi^h\circ \chi_\ell$.  
In fact, if $w_{h,\ell} \in \cV^h_\ell$ with nodal values $\bW$, then 
\beqs
\Pi^h\big(\chi_\ell w_{h,\ell}\big) = \sum_{p \in \cI^h} \big( R^T_\ell \bW\big)_p \phi_p,
\eeqs
where $R_\ell$ is defined by \eqref{eq:explicit}, 
and thus $\Pi^h\circ \chi_\ell$ defines a prolongation from  $\cV_\ell^h$ to $\cV^h$. 

To  analyse the
preconditioner    \eqref{eq:ASpc},  
we 
define the projections 
$Q_{ \abs,  \ell}^h  : H^1(\Omega) \rightarrow \tVhl$, by requiring that, given $v\in H^1(\Omega)$,
$Q_{\abs,  \ell}^h  v \in \tVhl$ 
satisfies the equation 
\begin{equation} 
a_{\abs, \ell}(Q_{\abs,\ell}^h v , w_{h,\ell}) \ = \ a_{\abs}(v, \Pi^h(\chi_\ell w_{h,\ell}))
\quad \text{for all} \quad w_{h,\ell}  \in \tVhl.
\label{eq:impproj} 
\end{equation}
For $|\abs|>0$, $Q_{\abs,\ell}$ is well-defined by Part (i) of Theorem \ref{thm:estimate}. For $\abs=0$, $Q_{\abs,\ell}$ is well-defined for all $h\leq \hbarkr$ by 
Part (ii) of Theorem \ref{thm:estimate}. 
To combine the actions of these  local projections additively, we define the global projection by
\begin{align} 
 Q_{\abs}^h \  :=\  \sum _{\ell=1}^N \Pi^h (\chi_\ell Q_{\abs,\ell}^h)  , \label{eq:global} 
\end{align} 
where again, each term in the sum can be interpreted as an element of $H^1(\Omega)$.  
The following theorem  shows that  the matrix representation of  
$Q^h_{\abs}$ restricted to  $\cV^h$ coincides with the preconditioned matrix  $B_\abs^{-1}A_\abs$.
This result uses the weighted inner product defined in \eqref{eq:wip}. 

\begin{theorem}[From projection operators to matrices] \label{thm:matrixprec}
Let $v_h \in \cV^h$,  with nodal values given in the vector $\bV$. 
Then, for any $\ell$, when the function $Q_{\abs,\ell}^h v_h \in \cV^h_\ell$ is well-defined it has nodal vector  
\begin{align}\label{eq:nodal}\bW = A_{\abs,\ell}^{-1} R_\ell A_\abs  \bV\ .\end{align} 
Consequently, for any $u_h, v_h \in \cV^h$, 
\begin{align}\label{eq:nodal_add}
( u_h , Q_{\abs}^h v_h)_{1,k} \ = \  \langle \bU, B_\abs^{-1} A_{\abs} \bV\rangle_{D_k}.
\end{align}

\end{theorem}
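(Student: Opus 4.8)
The plan is to translate the variational definitions \eqref{eq:impproj} and \eqref{eq:global} into identities between nodal vectors, and then simply read off the matrix statements \eqref{eq:nodal} and \eqref{eq:nodal_add}; both parts amount to bookkeeping, and the only point requiring care is the placement of conjugate transposes forced by the (standard) convention that $a_\abs$, $a_{\abs,\ell}$, $(\cdot,\cdot)_{1,k}$ and $\langle\cdot,\cdot\rangle_{D_k}$ are all conjugate-linear in their second argument.

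First I would record three ``dictionary'' identities. Because the nodal basis functions $\phi_p$ are real and $\matrixS,\matrixM,\matrixN$ are real symmetric, the definitions \eqref{eq:discrete}, \eqref{eq:matrices} give $(A_\abs)_{i,j} = a_\abs(\phi_j,\phi_i)$, and hence, by bilinearity, $a_\abs(p_h,q_h) = \mathbf{Q}^* A_\abs \mathbf{P}$ for all $p_h,q_h\in\cV^h$ with nodal vectors $\mathbf{P},\mathbf{Q}$; the analogous identity $a_{\abs,\ell}(p_{h,\ell},q_{h,\ell}) = \mathbf{Q}^* A_{\abs,\ell}\mathbf{P}$ holds on $\cV^h_\ell$ by the definition of $A_{\abs,\ell}$. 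Likewise \eqref{eq:HelmE}, \eqref{eq:matrices} and \eqref{eq:wip} give $(p_h,q_h)_{1,k} = \mathbf{Q}^*(\matrixS + k^2\matrixM)\mathbf{P} = \mathbf{Q}^* D_k \mathbf{P} = \langle \mathbf{P},\mathbf{Q}\rangle_{D_k}$. Finally, from \eqref{eq:explicit} (as already noted just before the theorem) the prolongation $\Pi^h(\chi_\ell\,\cdot\,)$ sends a $w_{h,\ell}\in\cV^h_\ell$ with nodal vector $\mathbf{Z}$ to the function in $\cV^h$ with nodal vector $R_\ell^\top\mathbf{Z}$, and since $R_\ell$ has real entries $(R_\ell^\top)^* = R_\ell$.

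Next I would prove \eqref{eq:nodal}. Fix $\ell$ and $v_h\in\cV^h$ with nodal vector $\bV$, and suppose $Q_{\abs,\ell}^h v_h$ is well-defined with nodal vector $\bW$. For arbitrary $w_{h,\ell}\in\cV^h_\ell$ with nodal vector $\bZ$, the left-hand side of \eqref{eq:impproj} equals $\bZ^* A_{\abs,\ell}\bW$ by the local dictionary identity, while the right-hand side equals $a_\abs(v_h,\Pi^h(\chi_\ell w_{h,\ell})) = (R_\ell^\top\bZ)^* A_\abs \bV = \bZ^* R_\ell A_\abs \bV$. As $w_{h,\ell}$ ranges over $\cV^h_\ell$ the vector $\bZ$ ranges over all vectors indexed by $\cI^h(\overline{\Omega_\ell})$ (there are no hidden constraints, since $\cI^h(\overline{\Omega_\ell})$ already excludes nodes on $\partial\Omega_\ell\cap\Gamma_D$), so $A_{\abs,\ell}\bW = R_\ell A_\abs \bV$; since well-definedness of $Q_{\abs,\ell}^h v_h$ is exactly invertibility of $A_{\abs,\ell}$ (Theorem \ref{thm:estimate}), inverting yields \eqref{eq:nodal}.

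Finally, \eqref{eq:nodal_add} follows by assembly: by \eqref{eq:global} and the prolongation identity, $Q_\abs^h v_h$ has nodal vector $\sum_{\ell=1}^N R_\ell^\top \bW^{(\ell)}$, where $\bW^{(\ell)}$ is the nodal vector of $Q_{\abs,\ell}^h v_h$ (well-defined under the standing assumptions on $h$ and $\abs$); substituting \eqref{eq:nodal} and using the definition \eqref{eq:ASpc} of $B_\abs^{-1}$, this equals $\sum_{\ell=1}^N R_\ell^\top A_{\abs,\ell}^{-1} R_\ell A_\abs \bV = B_\abs^{-1} A_\abs \bV$. Applying the $(\cdot,\cdot)_{1,k}$-dictionary identity with first argument $u_h$ (nodal vector $\bU$) and second argument $Q_\abs^h v_h$ (nodal vector $B_\abs^{-1}A_\abs \bV$) then gives $(u_h,Q_\abs^h v_h)_{1,k} = \langle \bU, B_\abs^{-1}A_\abs \bV\rangle_{D_k}$. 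I do not expect a substantive obstacle here; the one thing to watch is consistency of orientation, namely that the nodal vector of the second argument always appears conjugate-transposed on the left, which is precisely why it matters that $R_\ell$, $\matrixS$, $\matrixM$, $\matrixN$ are real (and $\matrixS,\matrixM,\matrixN$ symmetric).
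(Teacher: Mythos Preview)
Your proof is correct and follows essentially the same route as the paper's: both translate the variational identity \eqref{eq:impproj} into a matrix identity $A_{\abs,\ell}\bW = R_\ell A_\abs \bV$ and then assemble \eqref{eq:nodal_add} via \eqref{eq:global} and \eqref{eq:ASpc}. The only cosmetic difference is direction---you start from the variational definition and derive the matrix equation, whereas the paper starts from the candidate vector $\bW = A_{\abs,\ell}^{-1}R_\ell A_\abs\bV$, verifies it satisfies \eqref{eq:impproj}, and invokes uniqueness; your explicit ``dictionary'' identities and careful tracking of conjugate transposes make the argument slightly cleaner than the paper's version, which reuses the symbol $v_h$ for both the given function and the test function.
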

\begin{proof} 
With $\bW$ as given in \eqref{eq:nodal},   we have    
$ (A_{\abs,\ell} \bW)_q  = (R_\ell A \bV)_q$, for all $q \in \cI^h(\overline{\Omega_\ell})$, and so (recalling the definition of $R_\ell$ in \eqref{eq:explicit}),  
$$\sum_{p \in \cI^h(\overline{\Omega_\ell})} a_{\abs,\ell} (\phi_p, \phi_q) W_p = \chi_\ell(x_q) \sum_{p \in \cI^h(\overline{\Omega})} a_\abs(\phi_p, \phi_q) V_p \ , \quad \text{for each} \quad q \in \cI^h(\overline{\Omega_\ell}).
$$
 Then, letting $w_h\in \cV^h_\ell, v_h \in \cV^h$ be defined by the nodal values $\bW$, $\bV$, we have 
$$ a_{\abs,\ell} (w_h, \phi_q) =   a_{\abs} (v_h, \chi_\ell(\bx_q) \phi_q) \ \quad \text{for each} \quad q \in \cI^h(\overline{\Omega_\ell}).$$ 
By multiplying by $v_h(\bx_q)$ and using the definition of $\Pi^h$ and summing over $q$, we then have that
$$ a_{\abs,\ell} (w_h, v_h ) =   a_{\abs} (v_h, \Pi^h(\chi_\ell v_h)) \ , \quad \text{for all} \quad v_h \in \cV^h\ .  
$$
The definition of $Q_{\abs,\ell}^h$ \eqref{eq:impproj} and uniqueness then imply that $w_h = Q_{\abs,\ell}^hv_h$ which proves \eqref{eq:nodal}. 
Recalling \eqref{eq:wip} and \eqref{eq:global},  we obtain as a consequence of \eqref{eq:nodal} that
\begin{align*}
  (u_h, Q_\abs^h v_h)_{1,k} =  \sum_\ell (u_h, \Pi^h(\chi_\ell Q_{\abs,\ell}^h v_h))_{1,k} 
   = \sum_\ell \langle\bU, R_\ell^\top A_{\abs,\ell}^{-1} R_\ell A_\abs \bV\rangle_{D_k} =  \langle\bU, B_\eps^{-1}  A_\abs \bV\rangle_{D_k}. 
\end{align*}

\vspace{-0.3cm} 

\end{proof}

\section{The  Main Results}
\label{sec:Theory}

\subsection{Estimates involving the  overlapping decomposition}\label{sec:3.1}
  
\begin{lemma}[Estimates on norms involving $\chi_\ell$]\label{lem:tech}
  With $\delta_\ell$ as defined in \eqref{eq:unifoverlap},  
\begin{align}
  \Vert \chi_\ell v \Vert^2_{1,k,\Omega_\ell} - 2 \Vert v \Vert_{1,k,\Omega_\ell}^2 &  \ \ \lesssim
               \ \frac{1}{(k\delta_{\ell})^2}   \Vert v \Vert^2_{1,k,\Omega_\ell}  ,  \quad
                                                                                      \tfa\,v \in H^1(\Omega_\ell).   \label{eq:tech1}\\
  \sum _{\ell=1}^N \Vert \chi_\ell v \Vert_{1 ,k,\Omega_\ell}^2  &  \ \lesssim \Lambda\left( 1 + \frac{1}{(k\delta)^2}\right)  \
\Vert v \Vert_{1,k}^2   
 \quad \tfa\,v \in H^1(\Omega).  \label{eq:tech2}\\
\sum _{\ell=1}^N \Vert \chi_\ell^2 v \Vert_{1 ,k,\Omega_\ell}^2  &  \ \lesssim\Lambda\left( 1 + \frac{\Cpou}{(k\delta)^2}\right)^2  \  \Vert v \Vert_{1,k}^2   
 \quad \tfa\,v \in H^1(\Omega) . \label{eq:tech2a}\\
\sum_{\ell=1}^N \Vert\chi_\ell f \Vert^2_{L^2(\Omega_\ell)} & \ \geq \  \frac{1}{\Lambda}\N{f}^2_{L^2(\Omega)} \quad\tfa f \in L^2(\Omega). 
\label{eq:tech3}\\
  \sum_{\ell=1}^N \Vert \chi_\ell f \Vert^2_{1, k, \Omega_\ell} & \ \geq \   \frac{1}{\Lambda}\N{f}^2_{1,k} - 
C \frac{\Lambda }{k\delta}\N{f}_{1,k}^2 \quad \tfa f \in H^1(\Omega),
\label{eq:tech4}
\end{align}
where $C$ denotes a parameter-independent constant.
\end{lemma}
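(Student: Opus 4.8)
The plan is to prove the five bounds in order. The first three rest on the product rule for $\nabla(\chi_\ell v)$, the elementary inequalities \eqref{eq:Cauchy}, the bounds $0\le\chi_\ell\le 1$ from \eqref{POUstar}, and the elementwise gradient estimate $\N{\nabla\chi_\ell}_{\infty,\tau}\lesssim\delta_\ell^{-1}$ from \eqref{eq:derivpou}; the last two additionally use the pointwise lower bound $\sum_\ell\chi_\ell^2\ge 1/\Lambda$ on $\overline{\Omega}$. For \eqref{eq:tech1} I would write $\nabla(\chi_\ell v)=\chi_\ell\nabla v+v\nabla\chi_\ell$, expand $\N{\nabla(\chi_\ell v)}^2_{L^2(\Omega_\ell)}$, and bound the cross term by \eqref{eq:Cauchy} with $\epsilon=1$, giving $\N{\nabla(\chi_\ell v)}^2_{L^2(\Omega_\ell)}\le 2\N{\chi_\ell\nabla v}^2_{L^2(\Omega_\ell)}+2\N{v\nabla\chi_\ell}^2_{L^2(\Omega_\ell)}$. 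Then $0\le\chi_\ell\le 1$ bounds the first term by $2\N{\nabla v}^2_{L^2(\Omega_\ell)}$, and summing \eqref{eq:derivpou} over the elements $\tau\subset\Omega_\ell$ gives $\N{v\nabla\chi_\ell}^2_{L^2(\Omega_\ell)}\lesssim\delta_\ell^{-2}\N{v}^2_{L^2(\Omega_\ell)}$; adding $k^2\N{\chi_\ell v}^2_{L^2(\Omega_\ell)}\le k^2\N{v}^2_{L^2(\Omega_\ell)}$, subtracting $2\Vert v\Vert^2_{1,k,\Omega_\ell}$, and rewriting $\delta_\ell^{-2}\N{v}^2_{L^2(\Omega_\ell)}=(k\delta_\ell)^{-2}k^2\N{v}^2_{L^2(\Omega_\ell)}\le(k\delta_\ell)^{-2}\Vert v\Vert^2_{1,k,\Omega_\ell}$ yields \eqref{eq:tech1}.

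For \eqref{eq:tech2} I would sum \eqref{eq:tech1} over $\ell$, bound $(k\delta_\ell)^{-2}\le(k\delta)^{-2}$ using \eqref{eq:op}, and apply the second bound in \eqref{eq:finoverEuan1}. For \eqref{eq:tech2a} the same computation with $\chi_\ell^2 v$ in place of $\chi_\ell v$ works: from $\nabla(\chi_\ell^2 v)=\chi_\ell^2\nabla v+2\chi_\ell v\nabla\chi_\ell$, the bound $|\chi_\ell|\le 1$, and the inequalities $\sqrt{a^2+b^2}\le a+b$ and $a+b\le\sqrt2\sqrt{a^2+b^2}$ from \eqref{eq:Cauchy}, one gets $\Vert\chi_\ell^2 v\Vert_{1,k,\Omega_\ell}\lesssim\big(1+(k\delta_\ell)^{-1}\big)\Vert v\Vert_{1,k,\Omega_\ell}$; squaring, using $(k\delta_\ell)^{-1}\le 1+(k\delta)^{-2}$ and $\delta_\ell\ge\delta$, and summing via \eqref{eq:finoverEuan1} gives \eqref{eq:tech2a} (the constant $\Cpou$ plays no role since $\Cpou=1$).

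For \eqref{eq:tech3} and \eqref{eq:tech4} the starting point is that at any $\bx\in\overline{\Omega}$ at most $\Lambda$ of the numbers $\chi_\ell(\bx)$ are nonzero --- by \eqref{POUstar} all such indices lie in $\Lambda(\ell_0)$ for any one of them, cf.\ \eqref{eq:finoverlap} --- so Cauchy--Schwarz gives $1=\big(\sum_\ell\chi_\ell(\bx)\big)^2\le\Lambda\sum_\ell\chi_\ell(\bx)^2$, i.e.\ $\sum_\ell\chi_\ell^2\ge\Lambda^{-1}$ on $\overline{\Omega}$; integrating $\sum_\ell\chi_\ell^2|f|^2$ against this bound gives \eqref{eq:tech3}. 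For \eqref{eq:tech4} I would write $\Vert\chi_\ell f\Vert^2_{1,k,\Omega_\ell}=\N{\chi_\ell\nabla f+f\nabla\chi_\ell}^2_{L^2(\Omega_\ell)}+k^2\N{\chi_\ell f}^2_{L^2(\Omega_\ell)}$, keep the square $\N{\chi_\ell\nabla f}^2_{L^2(\Omega_\ell)}$, and bound the cross term below by $-2\N{\chi_\ell\nabla f}_{L^2(\Omega_\ell)}\N{f\nabla\chi_\ell}_{L^2(\Omega_\ell)}$; summing over $\ell$, applying Cauchy--Schwarz in $\ell$ with \eqref{eq:finoverEuan1} together with $\N{f\nabla\chi_\ell}_{L^2(\Omega_\ell)}\lesssim\delta^{-1}\N{f}_{L^2(\Omega_\ell)}$, the total cross term is bounded by $C\Lambda\delta^{-1}\N{\nabla f}_{L^2(\Omega)}\N{f}_{L^2(\Omega)}\le C\Lambda(k\delta)^{-1}\Vert f\Vert^2_{1,k}$ via \eqref{eq:Cauchy}. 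Combining $\sum_\ell\N{\chi_\ell\nabla f}^2_{L^2(\Omega_\ell)}\ge\Lambda^{-1}\N{\nabla f}^2_{L^2(\Omega)}$ with \eqref{eq:tech3} applied to the $k^2$-term then produces $\Lambda^{-1}\Vert f\Vert^2_{1,k}-C\Lambda(k\delta)^{-1}\Vert f\Vert^2_{1,k}$, which is \eqref{eq:tech4}.

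The main obstacle is the cross-term estimate in \eqref{eq:tech4}. One must apply the finite-overlap bound \eqref{eq:finoverEuan1} only once to each of $\N{\nabla f}$ and $\N{f}$ to avoid accumulating powers of $\Lambda$, pair the factor $\delta^{-1}$ with the $k^2$-weight through $2k\N{\nabla f}_{L^2(\Omega)}\N{f}_{L^2(\Omega)}\le\Vert f\Vert^2_{1,k}$ so that the small parameter emerges as $(k\delta)^{-1}$ rather than $(k\delta)^{-2}$, and handle $\nabla\chi_\ell$ element by element throughout, since $\chi_\ell$ is only continuous piecewise linear and \eqref{eq:derivpou} is an elementwise bound.
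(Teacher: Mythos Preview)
Your proof is correct and follows essentially the same route as the paper: product rule plus \eqref{eq:derivpou} for \eqref{eq:tech1}--\eqref{eq:tech2a}, and the pointwise bound $\sum_\ell\chi_\ell^2\ge\Lambda^{-1}$ (Cauchy--Schwarz on the partition of unity) for \eqref{eq:tech3}--\eqref{eq:tech4}. The only cosmetic differences are that the paper obtains \eqref{eq:tech2a} by applying \eqref{eq:tech1} twice rather than expanding $\nabla(\chi_\ell^2 v)$ directly, derives the pointwise lower bound via an explicit decomposition $\overline\Omega=\bigcup_j D_j$ into sets where exactly $j$ of the $\chi_\ell$ are nonzero, and bounds the cross term in \eqref{eq:tech4} subdomain-by-subdomain (by $C(k\delta_\ell)^{-1}\|f\|_{1,k,\Omega_\ell}^2$) before summing, instead of your Cauchy--Schwarz in $\ell$.
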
 

\begin{proof} 
Using 
$\nabla (\chi_\ell v) = (\nabla \chi_\ell )v+\chi_\ell \nabla v$,
\eqref{POUstar}
and  \eqref{eq:derivpou}, we have that, for some constant $C$,  
$$\vert \nabla (\chi_\ell v)(\bx)  \vert^2 \ \leq\  2 \left(\frac{C}{\delta_\ell^2} \vert v(\bx) \vert^2 + \vert \nabla v (\bx) \vert^2\right),$$
 for all $\bx \in \Omega_\ell$. Then 
$$  \Vert \chi_\ell v \Vert_{1,k,\Omega_\ell}^2 \ 
\leq \ \frac{2C}{\delta_\ell^2}  \Vert v \Vert_{L^2(\Omega_\ell)}^2 +  2 \vert v \vert_{H^1(\Omega_\ell)}^2 + k^2 \Vert v \Vert_{L^2(\Omega_\ell)}^2 \ \leq 
 2 \left( 1 + \frac{C}{(k\delta_\ell)^2}\right)  
\Vert v \Vert_{1,k,\Omega_\ell}^2\,, $$
which yields  the estimate \eqref{eq:tech1}.   

From \eqref{eq:finoverEuan1} and \eqref{eq:op}, we see that \eqref{eq:tech2} follows from \eqref{eq:tech1}.
The estimate \eqref{eq:tech2a} follows from two successive applications of \eqref{eq:tech1}, summing both sides of the resulting estimate over $\ell$, and then using \eqref{eq:finoverEuan1} and \eqref{eq:finoverEuan1}.

To prove \eqref{eq:tech3}, first 
define, for each $\bx \in \oOmega$, a positive integer $m = m(\bx)$ by 
\beq\label{eq:defm}
m(\bx) := \# \, \big\{ \ell \in \{1, \ldots, N\}: \bx \in \mathrm{supp} \, \chi_\ell \big\} \ .
\eeq
Note that,  because $\supp(\chi_\ell) \subseteq \oOmega_\ell$,  the assumption \eqref{eq:finoverlap} ensures  
$ 1 \leq m(\bx) \leq \Lambda$, for all $\bx \in \Omega$. Then,  for any integer 
$j \in \{ 1, \ldots , \Lambda\} $, we define the subset of $\Omega$:  
$ D_j :=  \{ \bx \in \oOmega : m(\bx) =j\},   $
so that $\bx \in D_j$ if and only if $\bx$ lies in the supports of exactly $j$ of the 
partition of unity functions $\{\chi_\ell\}$.    Corresponding to these we also define the index sets: 
\begin{align} \label{eq:index}  
\cD(j) \ = \ \big\{\ell \in \{ 1, \ldots , N\}: \mathrm{supp} \, \chi_\ell \cap D_j \not = \emptyset\big\} . 
\end{align} 
This notation is illustrated in \S \ref{sec:Numerical} in the context of the particular overlapping cover used there. As that example shows, some of the sets $D_j$ can  have zero Lebesgue measure as subsets of $\Omega$.

Then, we have
\begin{align}\label{eq:property1}
\oOmega=\bigcup_{j=1}^\Lambda D_j\, ,      \q \text{and} \q 
D_i\cap D_j.=\emptyset ~\m{if} ~ i\ne j\, , 
\end{align} 
Moreover, for all   
$j = 1, \ldots, \Lambda$,  
\beq\label{eq:property2}
 \sum_{\ell \in \cD(j)}  \chi_\ell  (\bx) = 1 \quad \text{when  } ~\bx \in D_j\, .
 \eeq
Then, noting that $\#\{ \ell \in \cD(j): \chi_\ell(\bx) \not = 0  \}  = j \leq \Lambda$ and using \eqref{eq:property2} and the Cauchy-Schwarz inequality  we obtain, for all $\bx \in D_j$, 
\begin{equation} 1 \ = \ \left(\sum_{\ell \in \cD(j)}  \chi_\ell  (\bx)\right)^2  \leq j\sum_{\ell \in \cD(j)}  \chi_\ell^2   (\bx)
\le \Lambda\sum_{\ell \in \cD(j)}  \chi_\ell^2   (\bx) \,.
\label{eq:sqest} 
\end{equation} 
Using \eqref{eq:property1},  \eqref{eq:sqest} and \eqref{eq:index},   we find
\begin{align*} 
& \sum_{\ell=1}^N \int_{\Omega_\ell} \chi^2_\ell(\bx) \vert f(\bx)\vert^2 \rd \bx\nb 
\ =\ \sum_{j=1}^\Lambda\sum_{\ell=1}^N   \int_{\Omega_\ell\cap D_j} \chi^2_\ell(\bx) \vert f(\bx)\vert^2 \rd \bx\nb 
\ =\ \sum_{j=1}^\Lambda\sum_{{\ell\in \cD(j)}}   \int_{ \Omega_\ell\cap D_j} \chi^2_\ell(\bx) \vert f(\bx)\vert^2 \rd \bx\nb  \\
&\mbox{\hspace{1cm}}= \  \sum_{j=1}^\Lambda   \int_{D_j} \bigg(\sum_{\ell\in \cD(j)} \chi^2_\ell(\bx)\bigg) \vert f(\bx)\vert^2 \rd \bx\nb   
 \ \ge \ 
\frac 1\Lambda \sum_{j=1}^\Lambda   \int_{D_j}  \vert f(\bx)\vert^2 \rd \bx\nb \ = \  \frac 1\Lambda \int_{\Omega} \vert f(\bx)\vert^2 \rd \bx,
\end{align*} 
 which is \eqref{eq:tech3}.  Finally, for  \eqref{eq:tech4}, we use \eqref{POUstar} and \eqref{eq:derivpou}  to obtain
\begin{align*} \Vert \chi_\ell f \Vert_{1,k,\Omega_\ell}^2  \ & = \ k^2 \Vert \chi_\ell f \Vert_{L^2(\Omega_\ell)}^2 + \Vert \chi_\ell 
\vert \nabla f \vert \Vert_{L^2(\Omega_\ell)}^2 + 2 \mathrm{Re}  \int_{\Omega_\ell} \chi_\ell f \nabla \chi_\ell . \nabla \overline{f}  
+ \Vert f \vert \nabla \chi_\ell\vert \Vert^2_{L^2(\Omega_\ell)} \\
& \geq 
k^2 \Vert \chi_\ell f \Vert_{L^2(\Omega_\ell)}^2 + \Vert \chi_\ell 
\vert \nabla f \vert \Vert_{L^2(\Omega_\ell)}^2 - \frac{C}{k \delta_\ell} \Vert f \Vert_{1,k,\Omega_\ell}^2,   
\end{align*}
and the result is obtained by summing, and using \eqref{eq:tech3},  \eqref{eq:finoverEuan1} and \eqref{eq:op}. 
\end{proof} 

\begin{remark}\label{rem:stable_cts}
The estimate \eqref{eq:tech2}  provides a   
``stable splitting'', i.e.~any  $v \in H^1(\Omega)$ has a decomposition {into components} $\chi_\ell v \in H^1(\Omega_\ell)$, {with}
$ v = \sum_\ell \chi_\ell v,  $
so that sum of  the squares of the  energies of the components is bounded in terms of the square of the energy of $v$, with a  constant that is independent of $k, h, H $ and $\delta$, provided only that  
$k \delta  \gtrsim  1$.
Corollary \ref{stable_splitting_fem} 
below  provides  an analogous stable splitting for  finite element functions.  
This result is perhaps a little surprising, since,  for  positive-definite elliptic 
problems,  families of subdomains with decreasing diameter do not enjoy this   
 property (and a coarse space is needed to restore it) \cite{ToWi:05}. 
Here the stable splitting holds without coarse space   
as $k \rightarrow \infty$ (i.e.~for a  family of Helmholtz problems of increasing difficulty).  
This includes for example,  subdomains of diameter   
$H \sim k^{-\alpha}$ with $\alpha \in [0,1]$ and overlap  $k^{-1}  \lesssim   \delta  \leq H$.   
\end{remark}

\begin{lemma}[Error in interpolation of $\chi_\ell w_h$]\label{lem:motivated} 
Given $\ell \in\{ 1, \ldots, N\}$, suppose   $v_h \in \cV^h_\ell$. Then      
\begin{align} 
\|( {\rm I} - \Pi^h) (\chi_l v_h)\|_{1, k, \Om_l}  \ \lc \ 
 \left(1 + kh_\ell \right)\,  \left(\frac{\hmaxell}{\delta_\ell}\right)   \, \Vert v_h\Vert _{H^1(\Om_l)} \ ,    
 \quad 
 \label{eq:interp_bnd}
\end{align}
where $\hmaxell \ :=  \ \max_{\tau \subset  \overline{\Omega_\ell}} h_\tau$, and 
the hidden constant is independent of $\ell$.
\end{lemma}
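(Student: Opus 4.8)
The plan is to estimate the interpolation error element-by-element using the standard local estimate \eqref{eq:Ciarlet} with $r=2$, and then to exploit the fact that $\chi_\ell$ is continuous piecewise \emph{linear} on $\cT^h$, so that on each element $\tau$ the product $\chi_\ell v_h$ has a tractable second derivative. First I would write, using \eqref{eq:Ciarlet} applied to $\chi_\ell v_h \in H^2(\tau)$ (it is a product of polynomials on each simplex, hence smooth there),
\beqs
\Vert (I-\Pi^h)(\chi_\ell v_h)\Vert_{L^2(\tau)} + h_\tau \vert (I-\Pi^h)(\chi_\ell v_h)\vert_{H^1(\tau)} \ \lesssim \ h_\tau^2 \, \vert \chi_\ell v_h\vert_{H^2(\tau)}.
\eeqs
Since $\nabla^2(\chi_\ell v_h) = (\nabla^2\chi_\ell) v_h + 2(\nabla\chi_\ell)\otimes(\nabla v_h) + \chi_\ell \nabla^2 v_h$, and $\nabla^2 \chi_\ell = 0$ on the interior of each $\tau$ (piecewise linearity), we get $\vert \chi_\ell v_h\vert_{H^2(\tau)} \lesssim \delta_\ell^{-1} \vert v_h\vert_{H^1(\tau)} + \vert \chi_\ell \vert_{\infty,\tau}\vert v_h\vert_{H^2(\tau)}$, using \eqref{eq:derivpou} for the first term. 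The second term needs the inverse inequality $\vert v_h\vert_{H^2(\tau)} \lesssim h_\tau^{-1}\vert v_h\vert_{H^1(\tau)}$, valid for finite-element functions on shape-regular meshes, together with $0\le\chi_\ell\le 1$ from \eqref{POUstar}; this gives $\vert \chi_\ell v_h\vert_{H^2(\tau)} \lesssim (\delta_\ell^{-1} + h_\tau^{-1})\vert v_h\vert_{H^1(\tau)} \lesssim \delta_\ell^{-1}\vert v_h\vert_{H^1(\tau)}$ since $h_\tau \lesssim \delta_\ell$ (as $\delta_\ell$ spans at least one layer of elements, indeed $\delta_\ell \gtrsim h_\ell$).

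Combining, on each element $\tau \subseteq \overline{\Omega_\ell}$,
\beqs
\Vert (I-\Pi^h)(\chi_\ell v_h)\Vert_{L^2(\tau)}^2 + h_\tau^2\vert (I-\Pi^h)(\chi_\ell v_h)\vert_{H^1(\tau)}^2 \ \lesssim \ h_\tau^4 \delta_\ell^{-2}\vert v_h\vert_{H^1(\tau)}^2 \ \lesssim \ h_\ell^4\delta_\ell^{-2}\vert v_h\vert_{H^1(\tau)}^2.
\eeqs
Then I would assemble the $\Vert\cdot\Vert_{1,k,\Omega_\ell}^2$-norm: $\Vert w\Vert_{1,k,\Omega_\ell}^2 = \vert w\vert_{H^1(\Omega_\ell)}^2 + k^2\Vert w\Vert_{L^2(\Omega_\ell)}^2$, so summing over $\tau\subseteq\overline{\Omega_\ell}$ and using the two displays above,
\beqs
\Vert (I-\Pi^h)(\chi_\ell v_h)\Vert_{1,k,\Omega_\ell}^2 \ \lesssim \ \Big(\frac{h_\ell^2}{\delta_\ell^2} + k^2\frac{h_\ell^4}{\delta_\ell^2}\Big)\vert v_h\vert_{H^1(\Omega_\ell)}^2 \ = \ \frac{h_\ell^2}{\delta_\ell^2}\big(1 + (kh_\ell)^2\big)\vert v_h\vert_{H^1(\Omega_\ell)}^2 \ \lesssim \ (1+kh_\ell)^2\frac{h_\ell^2}{\delta_\ell^2}\Vert v_h\Vert_{H^1(\Omega_\ell)}^2,
\eeqs
using $1+(kh_\ell)^2 \lesssim (1+kh_\ell)^2$ and $\vert v_h\vert_{H^1(\Omega_\ell)}\le\Vert v_h\Vert_{H^1(\Omega_\ell)}$. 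Taking square roots gives exactly \eqref{eq:interp_bnd}.

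The main obstacle — really the only non-routine point — is justifying the step that absorbs the $\chi_\ell\nabla^2 v_h$ contribution, i.e.\ that $h_\tau \lesssim \delta_\ell$ uniformly. This follows because, by construction, the overlap region $\Omega_\ell\backslash\mathring\Omega_\ell$ contains $\Omega_{\ell,b\delta_\ell}$ (from \eqref{eq:unifoverlap}), which by definition contains a full layer of elements adjacent to $\partial\Omega_\ell\backslash\Gamma$, so $\delta_\ell$ cannot be smaller than $\gtrsim h_\ell$; alternatively one notes the partition of unity with the gradient bound \eqref{eq:derivpou} can only be constructed on a mesh when $\delta_\ell \gtrsim h_\ell$. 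One must also be slightly careful that the inverse estimate $\vert v_h\vert_{H^2(\tau)}\lesssim h_\tau^{-1}\vert v_h\vert_{H^1(\tau)}$ requires shape-regularity of $\cT^h$, which is assumed; and that $\chi_\ell v_h$ genuinely lies in $H^2(\tau)$ on each \emph{closed} element even though it is only Lipschitz across element faces — but \eqref{eq:Ciarlet} is applied element-by-element, so this causes no difficulty. Everything else is bookkeeping with \eqref{eq:Ciarlet}, \eqref{eq:derivpou}, \eqref{POUstar} and the elementary inequalities \eqref{eq:Cauchy}.
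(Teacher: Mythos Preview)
Your argument contains a genuine error that breaks the proof whenever the polynomial degree $r-1$ exceeds $1$. You write
\[
\vert \chi_\ell v_h\vert_{H^2(\tau)} \ \lesssim\ (\delta_\ell^{-1} + h_\tau^{-1})\,\vert v_h\vert_{H^1(\tau)} \ \lesssim\ \delta_\ell^{-1}\vert v_h\vert_{H^1(\tau)} \quad\text{since } h_\tau \lesssim \delta_\ell,
\]
but the implication is backwards: $h_\tau \lesssim \delta_\ell$ gives $h_\tau^{-1}\gtrsim \delta_\ell^{-1}$, so the sum is dominated by $h_\tau^{-1}$, not $\delta_\ell^{-1}$. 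Carrying the correct bound through, the $H^1$ part of the interpolation error becomes
\[
\vert (I-\Pi^h)(\chi_\ell v_h)\vert_{H^1(\tau)} \ \lesssim\ h_\tau\big(\delta_\ell^{-1}+h_\tau^{-1}\big)\vert v_h\vert_{H^1(\tau)} \ \lesssim\ \big(h_\tau/\delta_\ell + 1\big)\vert v_h\vert_{H^1(\tau)},
\]
and the constant $1$ destroys the crucial factor $h_\ell/\delta_\ell$ in \eqref{eq:interp_bnd}. Your proof is salvageable only in the special case $r=2$, where $v_h$ is piecewise linear so $\nabla^2 v_h \equiv 0$ on each $\tau$ and the offending term $\chi_\ell\nabla^2 v_h$ simply disappears.

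The paper's remedy is to apply \eqref{eq:Ciarlet} at the \emph{full} order $r$ of the finite-element space, rather than at order $2$. Then in the Leibniz expansion of $D^\alpha(\chi_\ell v_h)$ with $|\alpha|=r$, every term with two or more derivatives on $\chi_\ell$ vanishes (since $\chi_\ell$ is linear on $\tau$) \emph{and} the term $\chi_\ell D^\alpha v_h$ vanishes (since $v_h$ has degree $r-1$). Only the cross terms with exactly one derivative on $\chi_\ell$ survive, giving $\vert \chi_\ell v_h\vert_{H^r(\tau)} \lesssim \delta_\ell^{-1}\vert v_h\vert_{H^{r-1}(\tau)}$ cleanly; the inverse estimate $h_\tau^{r-2}\vert v_h\vert_{H^{r-1}(\tau)}\lesssim \Vert v_h\Vert_{H^1(\tau)}$ then yields \eqref{eq:interp_bnd} without any spurious $\cO(1)$ term. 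The moral is that the interpolation order must match the polynomial degree of the product $\chi_\ell v_h$ so that the ``pure $v_h$'' contribution drops out exactly.
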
 

\begin{proof} 
For each element $\tau\in {\cT}^h$ with $\tau \subset \overline{\Omega_\ell}$, from   \eqref{eq:Ciarlet} we have 
\begin{align}\label{eq:first} 
  \|({\rm I} -\Pi^h) (\chi_l v_h)\|_{L^2(\tau)}   + h_\tau \vert (I - \Pi^h) (\chi_l v_h) \vert_{H^1(\tau)} \
  \lc \ h_\tau^r |\chi_l v_h|_{H^r(\tau)} \  . 
\end{align}
Let   $\alpha $  be any  multi-index of order $\vert \alpha \vert = r$. 
  Since, on $\tau$, $\chi_\ell$ is of degree $1$ and  $v_h$ is of degree $r-1$,   
  the Leibnitz formula tells us that,  
  $D^{\alpha} (\chi_\ell v_h)$ consists of only a linear combination of  functions of the form  $
  ({D^\beta  \chi_\ell})( D^{\alpha - \beta} v_h), \  \text{for all multi-indices with} \  \vert \beta \vert = 1 $
  (with coefficients independent of $\tau$). Combining this with \eqref{eq:derivpou}, leads to 
  \begin{align}\label{eq:alphad}
    \vert \chi_\ell v_h \vert_{H^r(\tau)} \ \lesssim \ \delta_\ell^{-1}\vert v_h \vert_{H^{r-1}(\tau)}.   
    \end{align}
    Then, using    \eqref{eq:first} and an  element-wise inverse estimate for shape regular elements, 
\begin{align}
  k \Vert ({\rm I} - \Pi^h) (\chi_l v_h)\Vert_{L^2(\tau)}\ & \lc\   kh_\tau   \frac{h_\tau}{\delta_\ell} h_\tau^{r-2}     \vert v_h \vert_{H^{r-1}(\tau)}\  \lc \  kh_\tau \frac{h_\tau}{\delta_\ell}  \Vert v_h\Vert_{H^1(\tau)}. 
  \label{eq:pert2}
\end{align}
Similarly 
\begin{equation}\label{eq:pert3}
  |({\rm I} - \Pi^h) (\chi_l v_h)|_{H^1(\tau)}\   \lc \
     \frac{h_\tau}{\delta_\ell} h_\tau^{r-2}     \vert v_h \vert_{H^{r-1}(\tau)}\ \lesssim \  \frac{h_\tau}{\delta_\ell}  \Vert v_h\Vert_{H^1(\tau)}.
\end{equation}
Conbining \eqref{eq:pert2} and \eqref{eq:pert3} yields   
the result.   
\end{proof} 

We now specify a simplifying  assumption on $h, k,$ and $\delta$.
\begin{assumption}\label{ass:basic2} 
\begin{align} \label{eq:basic_ass}  kh \ \lesssim\  1\  \ \quad \text{and} \ \ k \delta \  \gtrsim \  1.   
\end{align}
\end{assumption}
The left-hand inequality in \eqref{eq:basic_ass}  simply says that the fine mesh
resolves the oscillatory solution (which is always needed for an accuracy  anyway - see Remark \ref{rem:accuracy}),
while the right-hand inequality requires that the overlap contains at least  one oscillation.
Clearly  \eqref{eq:basic_ass} is equivalent to: 
\begin{align} 
\label{eq:equiv} k \hmaxell  \ \lesssim\  1\  \ \quad \text{and} \ \ k \delta_\ell  \  \gtrsim \  1, \quad \text{for each} \ \ell, 
\end{align}
provided the hidden constants are independent of $\ell$, and this in turn implies that $h_\ell/\delta_\ell \lesssim 1$. This latter inequality requires that the overlapped part of any subdomain only needs to be large enough
with respect to the {\em local} fine mesh diameter $\hmaxell$.
We  retain  the ratio 
$\hmaxell/\delta_\ell$  in the error estimate  \eqref{eq:interp_bnd}, since in many situations this can approach $0$ as $k \rightarrow \infty$.

\begin{corollary} \label{stable_splitting_fem} 
Under Assumption \ref{ass:basic2}, for $v_h \in \cV^h$,
\begin{align*}  v_h \ &= \ \sum_{\ell = 1}^N \Pi^h (\chi_\ell v_h) \quad 
 \text{and } 
\quad \sum_{\ell =1}^N \Vert 
\Pi^h (\chi_\ell v_h) \Vert_{1,k,\Omega_\ell}^2 \  \lesssim \ 
                        \Lambda
                        \Vert v_h \Vert_{1,k,\Omega}^2\ .
\end{align*}
\end{corollary}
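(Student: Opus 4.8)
The plan is to establish the two assertions of Corollary~\ref{stable_splitting_fem} in turn, both by reducing to the continuous-level estimates of Lemma~\ref{lem:tech} via the interpolation error bound of Lemma~\ref{lem:motivated}. For the identity $v_h = \sum_\ell \Pi^h(\chi_\ell v_h)$, I would argue nodewise: at each node $\bx_p \in \cN^h$ we have $\sum_\ell \chi_\ell(\bx_p) = 1$ by the partition of unity property, and $\Pi^h$ is the nodal interpolant, so $\big(\sum_\ell \Pi^h(\chi_\ell v_h)\big)(\bx_p) = \sum_\ell \chi_\ell(\bx_p) v_h(\bx_p) = v_h(\bx_p)$; since both sides lie in $\cV^h$ and agree at all nodes, they are equal. (One should note that $\chi_\ell v_h$, restricted to $\overline{\Omega_\ell}$, makes sense as an element of $\cV^h_\ell$ after interpolation, and that $\Pi^h(\chi_\ell v_h)$ is understood as the zero-extension to all of $\Omega$, consistent with the prolongation $\Pi^h\circ\chi_\ell$ introduced in \S\ref{sec:POU}.)

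For the stability bound, I would write $\Pi^h(\chi_\ell v_h) = \chi_\ell v_h - (I - \Pi^h)(\chi_\ell v_h)$ and use the triangle inequality in the $\Vert\cdot\Vert_{1,k,\Omega_\ell}$ norm together with the elementary inequality $(a+b)^2 \le 2a^2 + 2b^2$, so that
\begin{align*}
\sum_{\ell=1}^N \Vert \Pi^h(\chi_\ell v_h)\Vert_{1,k,\Omega_\ell}^2 \ \lesssim\ \sum_{\ell=1}^N \Vert \chi_\ell v_h\Vert_{1,k,\Omega_\ell}^2 + \sum_{\ell=1}^N \Vert (I - \Pi^h)(\chi_\ell v_h)\Vert_{1,k,\Omega_\ell}^2.
\end{align*}
The first sum is controlled by \eqref{eq:tech2} of Lemma~\ref{lem:tech}, giving $\lesssim \Lambda(1 + (k\delta)^{-2})\Vert v_h\Vert_{1,k}^2 \lesssim \Lambda \Vert v_h\Vert_{1,k}^2$ under Assumption~\ref{ass:basic2} (the right-hand inequality $k\delta \gtrsim 1$). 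For the second sum, I would sum the squares of \eqref{eq:interp_bnd} over $\ell$: each term is bounded by $(1 + kh_\ell)^2 (h_\ell/\delta_\ell)^2 \Vert v_h\Vert_{H^1(\Omega_\ell)}^2$, and under Assumption~\ref{ass:basic2} (in the form \eqref{eq:equiv}) we have $kh_\ell \lesssim 1$ and $h_\ell/\delta_\ell \lesssim 1$, so each term is $\lesssim \Vert v_h\Vert_{H^1(\Omega_\ell)}^2 \le \Vert v_h\Vert_{1,k,\Omega_\ell}^2$; then the finite-overlap bound \eqref{eq:finoverEuan1} yields $\sum_\ell \Vert v_h\Vert_{1,k,\Omega_\ell}^2 \le \Lambda \Vert v_h\Vert_{1,k}^2$. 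Adding the two contributions gives the claimed estimate.

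The routine parts are the triangle inequality and bookkeeping of the hidden constants (ensuring they are independent of $\ell$, which is guaranteed by the uniformity built into Lemmas~\ref{lem:tech} and~\ref{lem:motivated}). The one point that needs a little care — and is really the only subtlety — is that $\Vert\cdot\Vert_{1,k,\Omega_\ell}$ appears on the left of \eqref{eq:interp_bnd} but only the plain $H^1(\Omega_\ell)$ norm on the right; this is fine because $\Vert w\Vert_{H^1} \le \Vert w\Vert_{1,k}$ for $k \gtrsim 1$, so bounding $\Vert v_h\Vert_{H^1(\Omega_\ell)}$ by $\Vert v_h\Vert_{1,k,\Omega_\ell}$ loses nothing, and the factor $(1+kh_\ell)$ already absorbs the $k$-weighting on the left-hand side. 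Thus no genuine obstacle arises: the corollary is a direct assembly of the two preceding lemmas under the standing assumptions $kh \lesssim 1$ and $k\delta \gtrsim 1$.
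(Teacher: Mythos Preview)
Your proof is correct and follows essentially the same route as the paper: split $\Pi^h(\chi_\ell v_h)$ via the triangle inequality into $\chi_\ell v_h$ plus the interpolation error, control the first with Lemma~\ref{lem:tech} and the second with Lemma~\ref{lem:motivated} under Assumption~\ref{ass:basic2}, and then sum using the finite-overlap bound \eqref{eq:finoverEuan1}. The only cosmetic difference is that the paper first establishes the termwise bound $\Vert \Pi^h(\chi_\ell v_h)\Vert_{1,k,\Omega_\ell} \lesssim \Vert v_h\Vert_{1,k,\Omega_\ell}$ (using \eqref{eq:tech1} rather than \eqref{eq:tech2}) and only then squares and sums, whereas you sum first; also, you supply the nodal argument for the identity $v_h = \sum_\ell \Pi^h(\chi_\ell v_h)$, which the paper leaves implicit.
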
 

\begin{proof} 
  Using the triangle inequality, and then  \eqref{eq:tech1},  \eqref{eq:interp_bnd},  and \eqref{eq:equiv}, 
  we have
  \begin{align} \Vert \Pi^h (\chi_\ell v_h) \Vert_{1,k,\Omega_\ell} \ & \leq  \Vert \chi_\ell v_h \Vert_{1,k,\Omega_\ell} + \Vert (I - \Pi^h)(\chi_\ell v_h) \Vert_{1,k,\Omega_\ell} \nonumber \\
    \label{eq:teq}
    & \lesssim \ \Vert v_h \Vert_{1,k,\Omega_\ell} + \Vert v_h \Vert_{H^1(\Omega_\ell)} \ 
                                                                                     \lesssim 
                                                                                     \Vert v_h \Vert_{1,k,\Omega_\ell}, 
\end{align}   


and the result follows by squaring, summing, and applying \eqref{eq:finoverEuan1}.
\end{proof}

The next result is a kind of converse to the stable splitting result  discussed in Remark \ref{rem:stable_cts}. 
\begin{lemma}\label{lem:norm_of_sum}
For each  $\ell = 1, \ldots, N$,  choose any  functions $v_{\ell} \in H^1(\Omega)$,  
with $\supp v_\ell \subset \overline{\Omega_\ell}$. Then    
\begin{equation}
\label{eq:norm_of_sum}
\quad \left\Vert \, \sum_{\ell=1}^N  v_{\ell} \, \right\Vert_{1,k}^2 \ \leq \Lambda 
\sum_{\ell=1}^N \Vert v_{\ell} \Vert_{1,k, \Omega_\ell}^2\ .
\end{equation}  
\end{lemma}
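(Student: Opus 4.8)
The plan is to expand the squared energy norm using the underlying inner product $(\cdot,\cdot)_{1,k}$ from \eqref{eq:HelmE}, exploit the disjointness of the supports to kill most cross terms, and then apply the finite-overlap bound \eqref{eq:finoverlap} twice. Concretely, writing $w := \sum_{\ell=1}^N v_\ell$ we have $\|w\|_{1,k}^2 = \sum_{\ell,\ell'} (v_\ell, v_{\ell'})_{1,k}$. Since $\supp v_\ell \subseteq \overline{\Omega_\ell}$, the integrand defining $(v_\ell,v_{\ell'})_{1,k}$ --- namely $\nabla v_\ell\cdot\overline{\nabla v_{\ell'}} + k^2 v_\ell\overline{v_{\ell'}}$ --- vanishes a.e.\ whenever $\overline{\Omega_\ell}\cap\overline{\Omega_{\ell'}}=\emptyset$; hence only pairs with $\ell'\in\Lambda(\ell)$ survive, and the double sum collapses to $\sum_{\ell=1}^N \sum_{\ell'\in\Lambda(\ell)} (v_\ell, v_{\ell'})_{1,k}$.

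Next I would estimate each surviving term by the Cauchy--Schwarz inequality for $(\cdot,\cdot)_{1,k}$, restricting the integration to the common support, to get $|(v_\ell, v_{\ell'})_{1,k}| \le \|v_\ell\|_{1,k,\Omega_\ell}\,\|v_{\ell'}\|_{1,k,\Omega_{\ell'}}$, followed by Young's inequality $ab\le \tfrac12(a^2+b^2)$ (the first inequality in \eqref{eq:Cauchy} with $\epsilon=1$). This gives
\[
\Big\|\sum_{\ell=1}^N v_\ell\Big\|_{1,k}^2 \ \le\ \frac12\sum_{\ell=1}^N \sum_{\ell'\in\Lambda(\ell)} \Big( \|v_\ell\|_{1,k,\Omega_\ell}^2 + \|v_{\ell'}\|_{1,k,\Omega_{\ell'}}^2 \Big).
\]
For the first summand, $\sum_{\ell'\in\Lambda(\ell)} \|v_\ell\|_{1,k,\Omega_\ell}^2 = (\#\Lambda(\ell))\,\|v_\ell\|_{1,k,\Omega_\ell}^2 \le \Lambda\,\|v_\ell\|_{1,k,\Omega_\ell}^2$ directly from \eqref{eq:finoverlap}. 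For the second summand I would use the symmetry of the relation (i.e.\ $\ell'\in\Lambda(\ell)\iff\ell\in\Lambda(\ell')$) to swap the order of summation, $\sum_{\ell=1}^N\sum_{\ell'\in\Lambda(\ell)} \|v_{\ell'}\|_{1,k,\Omega_{\ell'}}^2 = \sum_{\ell'=1}^N (\#\Lambda(\ell'))\,\|v_{\ell'}\|_{1,k,\Omega_{\ell'}}^2 \le \Lambda\sum_{\ell'=1}^N\|v_{\ell'}\|_{1,k,\Omega_{\ell'}}^2$. Adding the two contributions and dividing by $2$ yields \eqref{eq:norm_of_sum}.

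There is no serious obstacle here; the only point requiring a little care is the bookkeeping --- correctly identifying which cross terms vanish (via the \emph{closed} supports), and remembering to apply the count $\#\Lambda(\ell)\le\Lambda$ to \emph{both} indices of each pair, which is why the final constant is $\Lambda$ rather than $\Lambda/2$. An equivalent route, which I would mention as an alternative, is the pointwise ``coloring'' estimate $\big|\sum_\ell a_\ell(\bx)\big|^2 \le \big(\#\{\ell:a_\ell(\bx)\ne0\}\big)\sum_\ell|a_\ell(\bx)|^2 \le \Lambda\sum_\ell|a_\ell(\bx)|^2$, applied with $a_\ell = \nabla v_\ell$ and with $a_\ell = k v_\ell$ and then integrated over $\Omega$; this invokes \eqref{eq:finoverlap} in exactly the same way.
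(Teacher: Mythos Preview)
Your argument is correct and is precisely the standard route for this type of finite-overlap estimate; the paper does not spell out a proof here but merely cites \cite[Lemma 4.2]{GrSpVa:17}, noting that the explicit constant $\Lambda$ requires ``a little extra care''. Your expansion of the inner product, elimination of cross terms via disjoint supports, Cauchy--Schwarz followed by Young, and the symmetric counting using $\#\Lambda(\ell)\le\Lambda$ is exactly the kind of argument intended, and your alternative pointwise coloring estimate is an equally valid (and arguably cleaner) way to reach the same constant.
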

\begin{proof}
The proof follows almost verbatim  that of \cite[Lemma 4.2]{GrSpVa:17}, with  a little extra 
care needed  to obtain 
the explicit constant  $\Lambda$ on the right-hand side. 
\end{proof}

\subsection{Results about the projection operators}\label{sec:3.2}

In this subsection, we study  the projection operators $Q_{\abs,\ell}^h $ which were defined in \eqref{eq:impproj}. Our goal is a  bound on  the operator 
$Q_{\abs,\ell}^h  - \Pi^h \chi_\ell $ with respect to  
the Helmholtz energy norm $\Vert \cdot \Vert_{1,k}$ -- see Lemma \ref{cor:rate}. This bound is  a  key ingredient of our main results --  
Theorem \ref{thm:main1} (for  projection operators) and  Theorem \ref{thm:main_intro} (for  matrices).    

We first note that, when $w_{h,\ell} \in \cV^h_\ell$,  $\Pi^h(\chi_\ell w_{h,\ell})$ is supported on $\Omega_\ell$
  and vanishes on $\partial \Omega_\ell$. Thus, by \eqref{eq:impproj}, for all  $ w_{h,\ell}  \in \tVhl$ and $v \in H^1(\Omega)$,     
\begin{align*} 
a_{\abs, \ell}(Q_{\abs,\ell}^h v , w_{h,\ell}) \ = \ a_{\abs,\ell}(v, \Pi^h(\chi_\ell w_{h,\ell}))
\end{align*}
 and hence 
\begin{equation}
\label{eq:comm} 
a_{\abs, \ell}(Q_{\abs,\ell}^h v - \Pi^h(\chi_\ell v), w_{h,\ell}) \ =
 \ a_{\abs,\ell}(v, \Pi^h(\chi_\ell w_{h,\ell})) - a_{\abs,\ell}(\Pi^h(\chi_\ell v), w_{h,\ell}). 
\end{equation}
This shows that  $Q_{\abs,\ell}^h v - \Pi^h(\chi_\ell v)$ satisfies a 
local impedance  problem with  ``data'' given by the   
``commutator'' (appearing on the right-hand side of   \eqref{eq:comm}). To estimate this commutator we write 
\begin{align}  
 a_{\abs,\ell}(v, \Pi^h(\chi_\ell w_{h,\ell}))  - a_{\abs,\ell}(\Pi^h(\chi_\ell v), w_{h,\ell}) 
 & = \  a_{\abs,\ell}((I - \Pi^h)(\chi_\ell v), w_{h,\ell}) - a_{\abs,\ell}(v, (I - \Pi^h)(\chi_\ell w_{h,\ell})) \nonumber \\
 & \quad \quad + \ b_\ell(v, w_{h,\ell})\ ,  \   
\label{eq:comm_arrange} \end{align}    
\begin{align} 
\text{where} \quad \quad b_\ell(v,w) \ & := \ a_{\abs,\ell}(v, \chi_\ell w) - a_{\abs,\ell}(\chi_\ell v, w) \  
= \ (v, \chi_\ell w)_{1,k,\Omega_\ell}  - (\chi_\ell v, w)_{1,k, \Omega_\ell}  \nonumber \\
&= \  
\int_{\Omega_\ell} \nabla \chi_\ell . ( \overline{w} \nabla v - v \nabla \overline{w} )  \   .
\label{eq:defb} 
\end{align} 
The following  lemma  provides estimates for each of the terms on the right-hand side of   \eqref{eq:comm_arrange}.

\begin{lemma} \label{lem:estb} $\mbox{\hspace{1in}}$ \\
(i) For all $v,w \in H^1(\Omega_\ell)$, 
$$ {\vert  b_\ell (v,w) \vert }  \  \lesssim \ \ (k \delta_\ell )^{-1}  \, {\Vert v \Vert_{1, k, \Omega_\ell}  
\Vert w \Vert_{1, k, \Omega_\ell} } .$$  \\
(ii) 
For all  $v_h, w_h \in \tVhl$, 
\begin{align*} & \max\Big\{{\vert a_{\abs,\ell} (v_h, (\rI - \Pi^h) (\chi_\ell w_h))\vert ,  \vert a_{\abs,\ell} ((\rI - \Pi^h) (\chi_\ell v_h), w_h)\vert}\Big\} \\
& \mbox{\hspace{1.5in}} \ \lesssim\ 
 \left(1 + \frac{1}{k H_\ell } \right)  
\frac {h_\ell}{ \delta_\ell}  \,  {\Vert v_h \Vert_{1,k, \Omega_\ell} \Vert w_h \Vert_{1,k, \Omega_\ell}   } . 
\end{align*}
\end{lemma}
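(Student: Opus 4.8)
The plan is to prove (i) by a direct Cauchy--Schwarz estimate applied to the explicit formula for $b_\ell$ recorded in \eqref{eq:defb}, and (ii) by chaining together the continuity of the local sesquilinear form with the interpolation error bound of Lemma~\ref{lem:motivated}; neither part introduces a genuine obstacle, the work being essentially bookkeeping with constants.

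For (i), I would start from the last expression in \eqref{eq:defb}, pull out the pointwise bound $\|\nabla\chi_\ell\|_{L^\infty(\Omega_\ell)}\lesssim\delta_\ell^{-1}$ supplied by \eqref{eq:derivpou}, and apply Cauchy--Schwarz to each of the two integrals $\int_{\Omega_\ell}|w|\,|\nabla v|$ and $\int_{\Omega_\ell}|v|\,|\nabla w|$, obtaining
\[
|b_\ell(v,w)|\ \lesssim\ \delta_\ell^{-1}\Big(\|w\|_{L^2(\Omega_\ell)}\|\nabla v\|_{L^2(\Omega_\ell)}+\|v\|_{L^2(\Omega_\ell)}\|\nabla w\|_{L^2(\Omega_\ell)}\Big).
\]
The claimed factor $(k\delta_\ell)^{-1}$ then follows from the elementary estimates $\|z\|_{L^2(\Omega_\ell)}\le k^{-1}\|z\|_{1,k,\Omega_\ell}$ and $\|\nabla z\|_{L^2(\Omega_\ell)}\le\|z\|_{1,k,\Omega_\ell}$, valid for any $z\in H^1(\Omega_\ell)$ directly from the definition of the weighted norm.

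For (ii), I treat the two terms symmetrically; consider $a_{\abs,\ell}\big(v_h,(\rI-\Pi^h)(\chi_\ell w_h)\big)$. Applying Lemma~\ref{lem:cont_coer}(i) with $\Omega$ replaced by $\Omega_\ell$ (which has characteristic length scale $H_\ell$, with shape-regularity uniform in $\ell$), $a_{\abs,\ell}$ is continuous on $H^1(\Omega_\ell)$ with constant $\Ccont\lesssim 1+(kH_\ell)^{-1}$, so that
\[
\big|a_{\abs,\ell}\big(v_h,(\rI-\Pi^h)(\chi_\ell w_h)\big)\big|\ \lesssim\ \Big(1+\tfrac{1}{kH_\ell}\Big)\,\|v_h\|_{1,k,\Omega_\ell}\,\big\|(\rI-\Pi^h)(\chi_\ell w_h)\big\|_{1,k,\Omega_\ell}.
\]
Next I would invoke the interpolation estimate \eqref{eq:interp_bnd} of Lemma~\ref{lem:motivated} to bound $\|(\rI-\Pi^h)(\chi_\ell w_h)\|_{1,k,\Omega_\ell}\lesssim(1+kh_\ell)(h_\ell/\delta_\ell)\|w_h\|_{H^1(\Omega_\ell)}$, and finally use Assumption~\ref{ass:basic2} in the equivalent form \eqref{eq:equiv} to absorb $1+kh_\ell$ into the hidden constant, together with $k\gtrsim 1$ (Assumption~\ref{ass:basic}) to replace $\|w_h\|_{H^1(\Omega_\ell)}$ by $\|w_h\|_{1,k,\Omega_\ell}$. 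The remaining term $a_{\abs,\ell}\big((\rI-\Pi^h)(\chi_\ell v_h),w_h\big)$ is handled identically, applying continuity with the interpolation estimate now on the first argument and $\|w_h\|_{1,k,\Omega_\ell}$ on the second; combining the two gives the stated maximum bound.

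The only point requiring a little care is ensuring that the continuity constant of $a_{\abs,\ell}$ is scaled by the \emph{subdomain} length scale $H_\ell$ rather than the global length $L$ — this is precisely why Lemma~\ref{lem:cont_coer}(i) was formulated with explicit dependence on the characteristic length scale, and on shape-regular subdomains the trace-inequality constants entering that proof are uniform in $\ell$.
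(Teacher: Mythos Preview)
Your proposal is correct and follows essentially the same route as the paper: for (i) you use Cauchy--Schwarz on the explicit formula \eqref{eq:defb} together with the gradient bound \eqref{eq:derivpou}, and for (ii) you chain the continuity estimate of Lemma~\ref{lem:cont_coer}(i) (with the subdomain length scale $H_\ell$) with the interpolation bound of Lemma~\ref{lem:motivated}, exactly as in \eqref{eq:pert1}. One small remark: you are right to flag that absorbing the factor $(1+kh_\ell)$ coming from Lemma~\ref{lem:motivated} requires $kh_\ell\lesssim 1$, i.e.\ Assumption~\ref{ass:basic2}; the paper's proof invokes only Assumption~\ref{ass:basic} explicitly and leaves this step implicit, so your bookkeeping is in fact slightly more careful than the original.
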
 
\begin{proof}
Applying  the Cauchy-Schwarz inequality to \eqref{eq:defb} and using    and \eqref{eq:derivpou}, we obtain
\begin{align*}
\vert b_\ell(v,w) \vert & \ \lesssim  \ {(k \delta_\ell)^{-1}  } \Big(k \Vert w \Vert_{L^2(\tO_\ell)} 
\vert v \vert_{H^1(\tO_\ell)} + k \Vert v \Vert_{L^2(\tO_\ell)} 
\vert w \vert_{H^1(\tO_\ell)}\Big),
\end{align*} 
and the result (i) follows after an application of the Cauchy-Schwarz inequality with respect to the Euclidean inner product 
in $\mathbb{R}^2$.

For (ii), recall  Assumption \ref{ass:basic},  
   use the continuity of $a_{\abs,\ell}$ (from  Lemma 
\ref{lem:cont_coer}) and the fact that  $\Omega_\ell$ has characteristic length scale $H_\ell$  to obtain 
\begin{align} \vert a_\abs(v_h, (\rI - \Pi^h) (\chi_\ell w_h))\vert \ \lesssim \ 
(1 + (kH_\ell)^{-1} ) \Vert v_h \Vert_{1,k,\Omega_\ell} 
\, \Vert (\rI - \Pi^h) (\chi_\ell w_h) \Vert_{1,k,\Omega_\ell}  \ ;
\label{eq:pert1}
\end{align}
the result then follows on  applying Lemma \ref{lem:motivated}. 
\end{proof}
Combining \eqref{eq:comm} with Lemma \ref{lem:estb} and  Theorem \ref{thm:estimate},
we obtain  the following estimate for the quantity $Q_{h,\ell} v_h - \Pi^h (\chi_\ell v_h)$.
As we will see in \eqref{eq:normBinvA}, this quantity is related to the quality of the preconditioner
on the subdomain $\Omega_\ell$. 
 \begin{lemma}\label{cor:rate}
Under Assumption \ref{ass:basic2} and the assumptions of Theorem \ref{thm:estimate}, for all  $v_h \in \cV^h_\ell$, and for all $\ell$, 
\beq\label{eq:Ekey1}
{\Vert Q^h_{\abs,\ell} v_h  - \Pi^h (\chi_\ell v_h) \Vert_{1,k , \tO_\ell}} \ \lesssim \  
\frac{\Cpou} { k \delta_\ell  } \, \Theta(\abs, H_\ell,k) \,
{ \Vert v_h \Vert_{1, k , \tO_\ell} },
\eeq
and
\beq\label{eq:Ekey2}
\Vert Q_{\abs, \ell}^h v_h \Vert_{1,k, \Omega_\ell} \ \lesssim \  
\left[ 1  + \frac {\Cpou}{k \delta_\ell }  \Theta(\eps,H_\ell ,k)  \right] \,  \Vert v_h \Vert_{1, k , \Omega_\ell}.
\eeq
\end{lemma}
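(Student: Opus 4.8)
The plan is to exploit the identity \eqref{eq:comm}, which tells us that $e_{h,\ell} := Q^h_{\abs,\ell} v_h - \Pi^h(\chi_\ell v_h) \in \cV^h_\ell$ is the Galerkin solution of a local impedance problem whose right-hand side is the ``commutator'' functional $w_{h,\ell} \mapsto a_{\abs,\ell}(v_h, \Pi^h(\chi_\ell w_{h,\ell})) - a_{\abs,\ell}(\Pi^h(\chi_\ell v_h), w_{h,\ell})$. So first I would apply Theorem \ref{thm:estimate} (in the form \eqref{eq:FEest1}) to $e_{h,\ell}$, giving
\beqs
\Vert e_{h,\ell}\Vert_{1,k,\tO_\ell} \ \lesssim \ \Theta(\abs,H_\ell,k)\, \max_{0\neq w_h\in\cV^h_\ell}\frac{|a_{\abs,\ell}(v_h,\Pi^h(\chi_\ell w_h)) - a_{\abs,\ell}(\Pi^h(\chi_\ell v_h),w_h)|}{\Vert w_h\Vert_{1,k,\tO_\ell}}.
\eeqs
It remains to bound the numerator by $\lesssim (k\delta_\ell)^{-1}\Vert v_h\Vert_{1,k,\tO_\ell}\Vert w_h\Vert_{1,k,\tO_\ell}$, uniformly in $\ell$.

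For that bound I would use the decomposition \eqref{eq:comm_arrange}, which splits the commutator into three pieces: the two ``interpolation-error'' terms $a_{\abs,\ell}((I-\Pi^h)(\chi_\ell v_h), w_h)$ and $a_{\abs,\ell}(v_h,(I-\Pi^h)(\chi_\ell w_h))$, and the ``exact-commutator'' term $b_\ell(v_h,w_h)$ from \eqref{eq:defb}. Part (i) of Lemma \ref{lem:estb} handles $b_\ell$ directly, giving the factor $(k\delta_\ell)^{-1}$. Part (ii) of Lemma \ref{lem:estb} handles the two interpolation-error terms, giving the factor $(1+(kH_\ell)^{-1})\,h_\ell/\delta_\ell$. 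Here I would invoke Assumption \ref{ass:basic2}, or rather its equivalent form \eqref{eq:equiv}: since $k\hmaxell\lesssim 1$ we have $1+(kH_\ell)^{-1}\lesssim (kH_\ell)^{-1}\lesssim (k\hmaxell)^{-1}$ wait --- more carefully, $kH_\ell\geq k\delta_\ell\gtrsim 1$ so $1+(kH_\ell)^{-1}\lesssim 1$ is false in general; instead note $k\delta_\ell\gtrsim1$ gives $h_\ell/\delta_\ell = (kh_\ell)/(k\delta_\ell)\lesssim kh_\ell\cdot 1$, and also $(1+(kH_\ell)^{-1})(h_\ell/\delta_\ell)\lesssim (h_\ell/\delta_\ell)(1 + (k\delta_\ell)^{-1}(\delta_\ell/H_\ell))\lesssim h_\ell/\delta_\ell \lesssim 1/(k\delta_\ell)$ using $kh_\ell\lesssim1$ and $\delta_\ell\le H_\ell$. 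Thus each of the three commutator pieces is bounded by a constant times $(k\delta_\ell)^{-1}\Vert v_h\Vert_{1,k,\tO_\ell}\Vert w_h\Vert_{1,k,\tO_\ell}$, which combined with the displayed a priori estimate above yields \eqref{eq:Ekey1} (absorbing $\Cpou=1$).

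For \eqref{eq:Ekey2} I would simply use the triangle inequality $\Vert Q^h_{\abs,\ell}v_h\Vert_{1,k,\tO_\ell} \le \Vert e_{h,\ell}\Vert_{1,k,\tO_\ell} + \Vert\Pi^h(\chi_\ell v_h)\Vert_{1,k,\tO_\ell}$, bound the first term by \eqref{eq:Ekey1}, and bound the second term by $\lesssim\Vert v_h\Vert_{1,k,\tO_\ell}$ using the single-subdomain estimate established in the chain \eqref{eq:teq} inside the proof of Corollary \ref{stable_splitting_fem} (i.e.\ $\Vert\Pi^h(\chi_\ell v_h)\Vert_{1,k,\Omega_\ell}\lesssim\Vert\chi_\ell v_h\Vert_{1,k,\Omega_\ell} + \Vert(I-\Pi^h)(\chi_\ell v_h)\Vert_{1,k,\Omega_\ell}\lesssim\Vert v_h\Vert_{1,k,\Omega_\ell}$, via \eqref{eq:tech1} and \eqref{eq:interp_bnd} together with \eqref{eq:equiv}). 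This gives the bracketed bound in \eqref{eq:Ekey2}.

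The main obstacle I anticipate is purely bookkeeping: making sure all the hidden constants are genuinely independent of $\ell$ (which requires the uniform-in-$\ell$ versions \eqref{eq:equiv} of Assumption \ref{ass:basic2}, the uniform shape-regularity of the $\Omega_\ell$, and — for the case $\abs=0$ — the uniform starshapedness needed so that a single mesh threshold $\hbarkr$ works for all subdomains in Theorem \ref{thm:estimate}(ii)), and correctly tracking that the factor $(1+(kH_\ell)^{-1})\,h_\ell/\delta_\ell$ coming from Lemma \ref{lem:estb}(ii) really is dominated by $(k\delta_\ell)^{-1}$ under \eqref{eq:equiv} rather than introducing an extra power of $H_\ell/\delta_\ell$. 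There is no deep difficulty; the substance is entirely in Theorem \ref{thm:estimate} and Lemma \ref{lem:estb}, which have already been established.
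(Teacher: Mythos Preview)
Your proposal is correct and follows essentially the same route as the paper: apply Theorem~\ref{thm:estimate} to the commutator identity \eqref{eq:comm}, bound the right-hand side via the decomposition \eqref{eq:comm_arrange} and Lemma~\ref{lem:estb}, simplify the resulting factor under Assumption~\ref{ass:basic2}, and then deduce \eqref{eq:Ekey2} from \eqref{eq:Ekey1} by the triangle inequality and \eqref{eq:teq}. Your momentary doubt about $1+(kH_\ell)^{-1}\lesssim 1$ is misplaced (it \emph{does} hold, since $kH_\ell\geq k\delta_\ell\gtrsim 1$ by \eqref{eq:equiv} and $\delta_\ell\le H_\ell$), but your alternative bookkeeping is equivalent to the paper's, which rewrites the factor as $(k\delta_\ell)^{-1}\big(kh_\ell+h_\ell/H_\ell+1\big)$ and bounds each summand using $kh_\ell\lesssim 1$ and $h_\ell\le H_\ell$.
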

  
\begin{proof}
Let $v_h \in \cV_\ell^h$. By \eqref{eq:comm} and \eqref{eq:comm_arrange}, we have 
\begin{align}\label{eq:specialFE}a_{\abs, \ell}(Q^h_{\abs,\ell} v_h - \Pi^h \chi_\ell v_h,  w_{h, \ell}) \ & = F(w_{h,\ell}) , \quad w_{h, \ell} \in \cV^h_\ell , \end{align} 
\beqs \text{where} \quad  F(w_{h,\ell}) \ := \ 
a_{\abs,\ell} ((I-  \Pi^h) (\chi_\ell v_h), w_{h,\ell})  -
a_{\abs,\ell} (v_h, (I - \Pi^h)(\chi_\ell w_{h,\ell}))  +   b_\ell(v_h,w_{h,\ell}).
\eeqs
Using Lemma \ref{lem:estb}  and  \eqref{eq:basic_ass},  we have, for any $w_{h,\ell} \in \cV^h_\ell$,   
\begin{align} 
  \vert F(w_{h,\ell})\vert \ &\lesssim  \  \left( \left(1 + \frac{1}{kH_\ell}\right)  \frac{h_\ell}{\delta_\ell } +
                               \frac{\Cpou}{k \delta_\ell} \right)\label{eq:trouble} 
\Vert  v_h\Vert_{1,k,\Omega_\ell}  \Vert  w_{h,\ell}\Vert_{1,k,\Omega_\ell} \\ 
& = \ \frac{1}{k \delta_\ell} \left( k h_\ell +  \frac{h_\ell}{H_\ell} +1  \right) \Vert  v_h\Vert_{1,k,\Omega_\ell}  \Vert  w_{h,\ell}\Vert_{1,k,\Omega_\ell} \ 
\lesssim  \ \frac{\Cpou}{k \delta_\ell}   \, 
   \Vert  v_h\Vert_{1,k,\Omega_\ell}  \Vert  w_{h,\ell}\Vert_{1,k,\Omega_\ell} , 
\nonumber 
\end{align}
 where we have used   \eqref{eq:basic_ass} and the fact that  $h_\ell \leq H_\ell$.
Then  \eqref{eq:Ekey1}  follows from Theorem \ref{thm:estimate}.
To obtain \eqref{eq:Ekey2}, we write 
$\Vert Q_{\abs,\ell}v_h \Vert_{1,k, \Omega_{\ell}} \  \leq \  \Vert Q_{\abs,\ell}v_h  - \Pi^h(\chi_\ell v_h) 
\Vert_{1,k, \Omega_{\ell}}  + \Vert \Pi^h(\chi_\ell v_h)  \Vert_{1,k, \Omega_{\ell}}$  ,  
and then use   \eqref{eq:teq} and \eqref{eq:Ekey1}.
\end{proof} 
Combining Lemma \ref{cor:rate} with the definition of $\Theta$ in \eqref{eq:Theta1}/\eqref{eq:Theta2},  we have the immediate  corollary:

\begin{corollary}  \label{cor:scenario} 
Under the assumptions of Theorem \ref{thm:estimate},  \\
\noi (i) If $|\abs|>0$, then 
\beq\label{eq:Friday1}
{\Vert Q^h_{\abs,\ell} v_h  - \chi_\ell v_h \Vert_{1,k , \tO_\ell}} \ \lesssim \  
 \frac{k}{|\abs|\delta_\ell} 
   { \Vert v_h \Vert_{1, k , \tO_\ell} }  \,   ;
\eeq
(ii) If $|\abs|\geq 0$, $h\leq \hbarkr$ and each $\Omega_\ell$ is starshaped with respect to a ball uniformly in $\ell$, then
\beq\label{eq:Friday2}
{\Vert Q^h_{\abs,\ell} v_h  - \chi_\ell v_h \Vert_{1,k , \tO_\ell}} \ \lesssim \  
 \left(\frac{H_\ell }{\delta_\ell } + \frac{1}{k \delta_\ell } \right)  
  { \Vert v_h \Vert_{1, k , \tO_\ell} }  \, , \ \text{uniformly  in $\ell$}   .
\eeq

\end{corollary}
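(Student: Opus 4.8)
The plan is to obtain both estimates directly from Lemma~\ref{cor:rate}, the only additional ingredient being the interpolation bound of Lemma~\ref{lem:motivated}, which is needed because Lemma~\ref{cor:rate} controls $Q^h_{\abs,\ell} v_h - \Pi^h(\chi_\ell v_h)$ whereas the present statement concerns $Q^h_{\abs,\ell} v_h - \chi_\ell v_h$. First I would split, by the triangle inequality, $\Vert Q^h_{\abs,\ell} v_h - \chi_\ell v_h\Vert_{1,k,\tO_\ell} \le \Vert Q^h_{\abs,\ell} v_h - \Pi^h(\chi_\ell v_h)\Vert_{1,k,\tO_\ell} + \Vert (\rI - \Pi^h)(\chi_\ell v_h)\Vert_{1,k,\tO_\ell}$, bound the first summand by \eqref{eq:Ekey1}, i.e.\ by a constant times $(k\delta_\ell)^{-1}\Theta(\abs,H_\ell,k)\Vert v_h\Vert_{1,k,\tO_\ell}$, and bound the second summand via Lemma~\ref{lem:motivated}. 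Under Assumption~\ref{ass:basic2} (so that $kh_\ell\lesssim 1$ by \eqref{eq:equiv}, hence $1+kh_\ell\lesssim 1$) the latter gives $\Vert (\rI-\Pi^h)(\chi_\ell v_h)\Vert_{1,k,\tO_\ell} \lesssim (h_\ell/\delta_\ell)\Vert v_h\Vert_{H^1(\tO_\ell)} \lesssim (h_\ell/\delta_\ell)\Vert v_h\Vert_{1,k,\tO_\ell}$, using $k\gtrsim 1$.

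For part~(i) I would substitute $\Theta(\abs,H_\ell,k)=k^2/\vert\abs\vert$ from \eqref{eq:Theta1}, so that the first summand is $\lesssim k/(\vert\abs\vert\delta_\ell)\,\Vert v_h\Vert_{1,k,\tO_\ell}$, and then check that the interpolation term is subordinate: since $\vert\abs\vert\le k^2$ (Assumption~\ref{ass:basic}) and $kh_\ell\lesssim 1$, we have $h_\ell/\delta_\ell = (kh_\ell)/(k\delta_\ell) \lesssim (k\delta_\ell)^{-1} \le k/(\vert\abs\vert\delta_\ell)$, whence the sum is $\lesssim k/(\vert\abs\vert\delta_\ell)\,\Vert v_h\Vert_{1,k,\tO_\ell}$, which is \eqref{eq:Friday1}. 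For part~(ii) I would use \eqref{eq:Theta2} in the form $\Theta(\abs,H_\ell,k)\le 1+kH_\ell$ (consistent with the convention $\Theta(0,H,k)=1+Hk$), so that the first summand is $\lesssim \big((k\delta_\ell)^{-1}+H_\ell/\delta_\ell\big)\Vert v_h\Vert_{1,k,\tO_\ell}$; the interpolation term $h_\ell/\delta_\ell$ is then absorbed into $H_\ell/\delta_\ell$ since $h_\ell\le H_\ell$, giving \eqref{eq:Friday2}. In both parts the uniformity in $\ell$ is inherited from the corresponding uniformity of the hidden constants in Lemmas~\ref{cor:rate} and~\ref{lem:motivated}.

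There is no real obstacle here: the statement is, as the paper says, an immediate corollary, and the work is purely a matter of substituting the two formulas for $\Theta$ and verifying that the lower-order interpolation contribution $h_\ell/\delta_\ell$ is dominated by the principal term --- which follows in case~(i) from $\vert\abs\vert\le k^2$ together with $kh\lesssim 1$, and in case~(ii) trivially from $h_\ell\le H_\ell$.
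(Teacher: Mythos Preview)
Your proposal is correct and matches the paper's approach: the paper simply says the corollary follows from ``combining Lemma~\ref{cor:rate} with the definition of $\Theta$ in \eqref{eq:Theta1}/\eqref{eq:Theta2}'', i.e.\ exactly the substitution you carry out. You are in fact slightly more careful than the paper, since you explicitly account for the discrepancy between $\chi_\ell v_h$ in the corollary statement and $\Pi^h(\chi_\ell v_h)$ in \eqref{eq:Ekey1} via the interpolation bound of Lemma~\ref{lem:motivated}; the paper's one-line justification glosses over this (and indeed the downstream use in Corollary~\ref{cor:sigma} only needs the version with $\Pi^h(\chi_\ell v_h)$), but your absorption of the resulting $h_\ell/\delta_\ell$ term into the principal bound is correct in both cases.
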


\subsection{Bounds on the norm and field of values}\label{sec:norm_fov}

To aid the reader, we recap all the assumptions made so far: 
  Both the fine mesh $\cT_h$ and the subdomains $\{\Omega_\ell\}$ are  assumed shape-regular and have overlap described in \eqref{eq:unifoverlap} and \eqref{eq:op}, with $\delta >0$. We make the finite overlap assumption \eqref{eq:finoverlap}  and the partition of unity functions $\{\chi_\ell\}$ are assumed to be continuous, piecewise linear, and to  satisfy \eqref{eq:derivpou}.  We assume that $k$ and $\eps$ satisfy Assumption \ref{ass:basic},  and either $\eta={\rm sign}(\abs)k$ or $\eta= \sqrt{k^2+ \ri \eps}$.
 All these will be assumed without comment in what follows, but 
  we will explicitly state when we need Assumption \ref{ass:basic2} and the following  slightly stronger assumption.

\begin{assumption} \label{ass:basic3} 
\begin{align} 
k\delta  \rightarrow \infty \quad \text{as} \quad k \rightarrow \infty . \label{eq:kdelta}  
\end{align}
\end{assumption}

This assumption requires   the overlap to contain an increasing number of
  oscillations as $k$ increases (although the rate of increase can be arbitrarily slow).

\begin{theorem}\label{thm:main1}
  Let Assumption  \ref{ass:basic2} hold and suppose that for each $\ell = 1, \ldots , N$ there exists $\sigma_\ell>0$
  such that
\beq\label{eq:sigma}
\N{Q_{\abs,\ell}^h v_h - \Pi^h(\chi_\ell v_h)}_{1,k,\Omega_\ell} \leq \sigma_\ell \N{v_h}_{1,k,\Omega_\ell}, \quad
\text{for all} \quad  v_h\in \cV^h, \quad \ell = 1, \ldots, N. 
\eeq
Set $\sigma = \max\{ \sigma_\ell: \ell = 1, \ldots, N\}$.

\noi (i) Then, 
\beq\label{eq:upperbound}
\max_{v_h \in \cV^h} \frac
 {\N{Q_{\abs}^h v_h}_{1,k}}{{\N{v_h}_{1,k}}} \ 
\lesssim \ 
\Lambda \, \left( 1 + \sigma \right) \ .
\eeq
\noi (ii) If, in addition, Assumption  \ref{ass:basic3} holds, then, for $k$ sufficiently large,
\beq\label{eq:lowerbound}
\min_{v_h \in \cV^h}\, \frac{
\big|(v_h,Q_\abs^h v_h)_{1,k}\big|
}{
\N{v_h}^2_{1,k}
}
\ \geq\ 
\left(
\frac{1}{\Lambda} -  \sqrt{2} \sigma \Lambda\right)  \ + \  R \,   
\eeq
where the remainder $R$
  satisfies  the estimate
\beq \label{HOT}
\left\vert R \right \vert \ \leq  C \,   \frac{\Lambda}{k \delta} \left( 1 + \sigma \right),
\eeq
where $C$ is a constant independent of all parameters. 
Note that  \eqref{eq:lowerbound}  is a genuine lower bound, and the unspecified constant $C$ appears  only in $R$.   
\end{theorem}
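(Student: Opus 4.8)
The plan is to prove the upper and lower bounds essentially separately, in both cases working from the representation $Q_\abs^h v_h = \sum_{\ell=1}^N \Pi^h(\chi_\ell Q_{\abs,\ell}^h v_h)$ together with the splitting $Q_{\abs,\ell}^h v_h = \Pi^h(\chi_\ell v_h) + e_\ell$, where $e_\ell\in\Vhl$ and, by hypothesis \eqref{eq:sigma}, $\N{e_\ell}_{1,k,\Omega_\ell}\le\sigma_\ell\N{v_h}_{1,k,\Omega_\ell}$. A fact used repeatedly is that, for any $w\in\Vhl$, the chain of inequalities in \eqref{eq:teq} (triangle inequality, \eqref{eq:tech1}, Lemma \ref{lem:motivated} and \eqref{eq:equiv}) gives $\N{\Pi^h(\chi_\ell w)}_{1,k,\Omega_\ell}\lesssim\N{w}_{1,k,\Omega_\ell}$, and that $\Pi^h(\chi_\ell w)$ is supported in $\overline{\Omega_\ell}$. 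For part (i) I would apply Lemma \ref{lem:norm_of_sum} to the sum of the $\overline{\Omega_\ell}$-supported functions $\Pi^h(\chi_\ell Q_{\abs,\ell}^h v_h)$, then bound each summand by $\N{\Pi^h(\chi_\ell Q_{\abs,\ell}^h v_h)}_{1,k,\Omega_\ell}\lesssim\N{Q_{\abs,\ell}^h v_h}_{1,k,\Omega_\ell}\le\N{e_\ell}_{1,k,\Omega_\ell}+\N{\Pi^h(\chi_\ell v_h)}_{1,k,\Omega_\ell}\lesssim(1+\sigma_\ell)\N{v_h}_{1,k,\Omega_\ell}$, using the triangle inequality, \eqref{eq:sigma} and \eqref{eq:teq}; squaring, summing and using the finite-overlap bound \eqref{eq:finoverEuan1} gives $\N{Q_\abs^h v_h}_{1,k}^2\lesssim\Lambda^2(1+\sigma)^2\N{v_h}_{1,k}^2$, which is \eqref{eq:upperbound}.

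For part (ii), since each $\Pi^h(\chi_\ell Q_{\abs,\ell}^h v_h)$ is supported in $\overline{\Omega_\ell}$ one has $(v_h,Q_\abs^h v_h)_{1,k}=\sum_\ell(v_h,\Pi^h(\chi_\ell Q_{\abs,\ell}^h v_h))_{1,k,\Omega_\ell}$, and the splitting of $Q_{\abs,\ell}^h v_h$ decomposes this into a ``main'' contribution $M:=\sum_\ell(v_h,\Pi^h(\chi_\ell\Pi^h(\chi_\ell v_h)))_{1,k,\Omega_\ell}$ and an ``error'' contribution $E:=\sum_\ell(v_h,\Pi^h(\chi_\ell e_\ell))_{1,k,\Omega_\ell}$. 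For $E$ I would use Cauchy--Schwarz termwise and then over $\ell$: $\sum_\ell\N{v_h}_{1,k,\Omega_\ell}^2\le\Lambda\N{v_h}_{1,k}^2$ by \eqref{eq:finoverEuan1}, while $\N{\Pi^h(\chi_\ell e_\ell)}_{1,k,\Omega_\ell}^2\le(2+C(k\delta_\ell)^{-1})\N{e_\ell}_{1,k,\Omega_\ell}^2$ from \eqref{eq:tech1} applied to $\chi_\ell e_\ell$ plus the interpolation-error bound of Lemma \ref{lem:motivated} (the factor $2$, and the fact that the interpolation error has relative size $\lesssim h_\ell/\delta_\ell\lesssim(k\delta_\ell)^{-1}$, both come via \eqref{eq:equiv}). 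Hence $|E|\le\sqrt{2}\,\sigma\Lambda\N{v_h}_{1,k}^2+\widetilde R$ with $|\widetilde R|\lesssim\frac{\Lambda}{k\delta}\sigma\N{v_h}_{1,k}^2$.

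For $M$ I would peel the interpolations and the multiplication by $\chi_\ell$ apart one step at a time: write $\Pi^h(\chi_\ell\Pi^h(\chi_\ell v_h))=\chi_\ell\Pi^h(\chi_\ell v_h)-(I-\Pi^h)(\chi_\ell\Pi^h(\chi_\ell v_h))$, then use the commutator identity $(v_h,\chi_\ell w)_{1,k,\Omega_\ell}=(\chi_\ell v_h,w)_{1,k,\Omega_\ell}+b_\ell(v_h,w)$ from \eqref{eq:defb} with $w=\Pi^h(\chi_\ell v_h)$, and finally $\Pi^h(\chi_\ell v_h)=\chi_\ell v_h-(I-\Pi^h)(\chi_\ell v_h)$. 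The two interpolation-error terms are bounded by Lemma \ref{lem:motivated} (with \eqref{eq:tech1} and \eqref{eq:teq}) and the commutator $b_\ell(v_h,\Pi^h(\chi_\ell v_h))$ by Lemma \ref{lem:estb}(i), each costing at most $C(k\delta_\ell)^{-1}\N{v_h}_{1,k,\Omega_\ell}^2$; what survives is $\sum_\ell\N{\chi_\ell v_h}_{1,k,\Omega_\ell}^2$, which is real and, by \eqref{eq:tech4}, bounded below by $\frac{1}{\Lambda}\N{v_h}_{1,k}^2-C\frac{\Lambda}{k\delta}\N{v_h}_{1,k}^2$. Summing the peeling errors over $\ell$ via \eqref{eq:finoverEuan1} and collecting them with $\widetilde R$ and the $\frac{\Lambda}{k\delta}$ term from \eqref{eq:tech4} into a single remainder $R$ gives $|R|\le C\frac{\Lambda}{k\delta}(1+\sigma)$. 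Then $|(v_h,Q_\abs^h v_h)_{1,k}|\ge\mathrm{Re}\,(v_h,Q_\abs^h v_h)_{1,k}\ge\sum_\ell\N{\chi_\ell v_h}_{1,k,\Omega_\ell}^2-|E|-(\text{peeling errors})\ge\big(\tfrac{1}{\Lambda}-\sqrt{2}\,\sigma\Lambda\big)\N{v_h}_{1,k}^2+R\,\N{v_h}_{1,k}^2$, which is \eqref{eq:lowerbound}--\eqref{HOT}; Assumption \ref{ass:basic3} and ``$k$ sufficiently large'' are used only to guarantee that the ratios $h_\ell/\delta_\ell$ and $(k\delta_\ell)^{-1}$ are genuinely small, so that they can be absorbed into $R$ in the stated form.

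The routine part is the bookkeeping of constants; the delicate point — and the place where the argument could go wrong — is the handling of the nested expressions $\Pi^h(\chi_\ell\Pi^h(\chi_\ell v_h))$ and $\Pi^h(\chi_\ell e_\ell)$, where one must apply the interpolation estimate of Lemma \ref{lem:motivated} at the \emph{inner} level (using that $\Pi^h(\chi_\ell v_h)$ and $e_\ell$ lie in $\Vhl$) before the commutator estimate of Lemma \ref{lem:estb}(i) and the lower bound \eqref{eq:tech4} can be applied to the innermost, un-interpolated quantity $\chi_\ell v_h$. Extracting the \emph{sharp} constants $1/\Lambda$ and $\sqrt{2}$ in part (ii), rather than merely ``$\lesssim$'', also requires the careful bookkeeping of \eqref{eq:tech4} and of \eqref{eq:tech1}/\eqref{eq:finoverEuan1} indicated above, so that all the remaining slack is confined to $R$.
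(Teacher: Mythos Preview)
Your proof is correct and follows essentially the same strategy as the paper: Lemma~\ref{lem:norm_of_sum}, \eqref{eq:teq} and \eqref{eq:finoverEuan1} for part~(i), and for part~(ii) the combination of Lemma~\ref{lem:motivated}, the commutator bound of Lemma~\ref{lem:estb}(i), the sharp estimates \eqref{eq:tech1} and \eqref{eq:tech4}, and the hypothesis \eqref{eq:sigma}. The only difference is the order of the decomposition in~(ii): the paper first peels off the outer $\Pi^h$ and $\chi_\ell$ (via interpolation error and commutator) to reach $(\chi_\ell v_h,\,Q_{\abs,\ell}^h v_h)_{1,k,\Omega_\ell}$ and \emph{then} splits $Q_{\abs,\ell}^h v_h=\chi_\ell v_h+z_\ell+(\Pi^h-\rI)(\chi_\ell v_h)$, whereas you split $Q_{\abs,\ell}^h v_h=\Pi^h(\chi_\ell v_h)+e_\ell$ first and peel afterwards, which forces you to handle the nested term $\Pi^h(\chi_\ell\Pi^h(\chi_\ell v_h))$; both orderings extract the same sharp constants $1/\Lambda$ and $\sqrt{2}\sigma\Lambda$ and push the rest into $R$.
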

\bpf
Throughout the proof, we use the notation
\beq\label{eq:Esplit}
z_l \, :=\,  Q_{\abs,\ell}^h v_h - \Pi^h(\chi_\ell v_h), \quad \text{so that, by \eqref{eq:sigma},} \quad
\Vert z_\ell \Vert_{1,k,\Omega_\ell} \leq \sigma_\ell \Vert v_h \Vert_{1,k, \Omega_\ell} .
\eeq 
To obtain \eqref{eq:upperbound}, we  use  the triangle inequality, then
\eqref{eq:teq} and \eqref{eq:sigma},   to obtain 
\beq\label{eq:QE1}
\Vert Q_{\eps,\ell}^h v_h \Vert_{1,k,\Omega_\ell} \ \leq \ \Vert \Pi^h (\chi_\ell v_h) \Vert_{1,k,\Omega_\ell} \ + \ \Vert z_l  \Vert_{1,k,\Omega_\ell} \ \leq \ \left(1 + \sigma_\ell   \right) \Vert  v_h \Vert_{1,k,\Omega_\ell} . 
\eeq
Then, using Lemma \ref{lem:norm_of_sum},   \eqref{eq:teq} and \eqref{eq:QE1},
\begin{align*}
 \Vert Q_{\eps}^h v_h \Vert_{1,k}^2  &  \ = \
\left\Vert \sum_\ell \Pi^h\left(\chi_\ell Q_{\eps, \ell}^h v_h\right) \right \Vert_{1,k}^2  \ \leq \  \Lambda \sum_\ell \left\Vert  \Pi^h\left(\chi_\ell Q_{\eps, \ell}^h v_h\right) \right \Vert_{1,k, \Omega_\ell}^2 \\
                                     & \lesssim \ \Lambda   \sum_\ell \left\Vert   Q_{\eps, \ell}^h v_h  \right \Vert_{1,k, \Omega_\ell}^2  \
                                       \lesssim  \ 
\Lambda\left( 1 + \sigma  \right)^2  \sum_\ell \left\Vert   v_h  \right \Vert_{1,k, \Omega_\ell}^2 
\end{align*} 
and \eqref{eq:upperbound}  then follows on using \eqref{eq:finoverEuan1}.

To obtain  \eqref{eq:lowerbound}, we
first use  Lemma \ref{lem:motivated},  and \eqref{eq:basic_ass}      to obtain
\begin{align}(v_h, \Pi^h(\chi_\ell Q_{\eps,\ell}^h v_h) ) _{1,k,\Omega_\ell} \ & = \ (v_h, \chi_\ell Q_{\eps,\ell}^h v_h ) _{1,k,\Omega_\ell} + 
                                                                                 \cO \left(\frac{h}{\delta}\right)  \Vert v_h \Vert_{1,k,\Omega_\ell}\, \Vert Q_{\abs, \ell}^h v_h \Vert_{1,k,\Omega_\ell}, \label{last1}\end{align}
Also,  using \eqref{eq:defb} and Lemma \ref{lem:estb}, we have that
\begin{align} 
\big\vert (v_h, \chi_\ell Q_{\eps,\ell}^h v_h ) _{1,k,\Omega_\ell} \ -  \ \ (\chi_\ell v_h,  Q_{\eps,\ell}^h v_h ) _{1,k,\Omega_\ell}\big\vert    & = \ \big\vert b_\ell (v_h, Q^h_{\abs,\ell}v_h)\big\vert \nonumber \\
  & \lesssim   \ 
\cO\left(\frac{\Cpou}{k \delta }\right) \Vert v_h \Vert_{1,k,\Omega_\ell} \, \Vert Q_{\abs, \ell}^h v_h \Vert_{1,k,\Omega_\ell} .  \label{last2}
     \end{align}     
Moreover, by the definition of $z_l$ and Lemma \ref{lem:motivated},
\begin{align}\nonumber  
(\chi_\ell v_h,  Q_{\eps,\ell}^h v_h ) _{1,k,\Omega_\ell} &  \ = \    \ 
\Vert\chi_\ell v_h\Vert^2_{1,k,\Omega_\ell} +(\chi_\ell v_h , z_\ell)_{1,k,\Omega_\ell}  + \big(\chi_\ell v_h, \Pi^h(\chi_\ell v_h) -\chi_\ell v_h\big)_{1,k,\Omega_\ell}\\
&\ =\ \Vert\chi_\ell v_h\Vert^2_{1,k,\Omega_\ell} +(\chi_\ell v_h , z_\ell)_{1,k,\Omega_\ell}  +\cO\left(\frac{h}{\delta}\right)\N{\chi_\ell v_h}_{1,k,\Omega_\ell}\N{ v_h}_{1,k,\Omega_\ell}.
  \label{last3} 
  \end{align} 
Combining \eqref{last1}, \eqref{last2}, and then using \eqref{last3},      we obtain 
\begin{align}
  (v_h, Q_\abs^h v_h)_{1,k} & = \sum_\ell
 \left(v_h, \Pi^h(\chi_\ell Q_{\eps,\ell}^h v_h) \right)_{1,k,\Omega_\ell}
 \nonumber  \\
& \hspace{-1cm}= \sum_\ell \left[ (\chi_\ell v_h , Q_{\eps, \ell}^h v_h)_{1,k,\Omega_\ell} 
                 + \cO\left(\frac{\Cpou}{k \delta} + \frac{h}{\delta}
                 \right)  \Vert v_h \Vert_{1,k,\Omega_\ell}\, \Vert Q_{\abs, \ell}^h v_h \Vert_{1,k,\Omega_\ell}
\right]
\nonumber \\
& \hspace{-1cm} = 
\sum_\ell  
\Big[ \Vert\chi_\ell v_h\Vert^2_{1,k,\Omega_\ell} + 
(\chi_\ell v_h , z_l)_{1,k,\Omega_\ell} \Big]\nonumber   \\
& \hspace{-1cm} \quad + \sum_\ell \left[
\cO\left(\frac{\Cpou}{k \delta} + \frac{h}{\delta} \right)   
\Vert v_h \Vert_{1,k, \Omega_\ell} \, \Vert Q_{\abs,\ell}^h v_h \Vert_{1,k,\Omega_\ell}+ \ \cO\left(\frac{h}{\delta}\right) \N{v_h}_{1,k,\Omega_\ell}  \N{\chi_\ell v_h}_{1,k,\Omega_\ell}
\right]
\label{last4} 
\end{align} 
Using  \eqref{eq:basic_ass},  \eqref{eq:QE1}, \eqref{eq:tech1} and \eqref{eq:finoverEuan1},  the second sum in \eqref{last4} can be estimated by
\begin{align*}
   \frac{1}{k \delta} \sum_\ell (1+ \sigma_\ell) \Vert v_h\Vert_{1,k,\Omega_\ell}^2 \ \lesssim\  \frac{\Lambda (1+\sigma) }{k \delta} \Vert v_h\Vert_{1,k}^2.
\end{align*}  

Also, using the Cauchy-Schwarz inequality, and then  \eqref{eq:tech1} and \eqref{eq:Esplit}, the modulus of the  first sum in \eqref{last4} can be estimated from below by  
\begin{align}
&   \sum_\ell   \Vert \chi_\ell v_h\Vert^2_{1,k,\Omega_\ell} -
    \left| \sum_\ell (\chi_\ell v_h , z_\ell)_{1,k,\Omega_\ell} \right| \ \geq \
    \sum_\ell \left(  \Vert \chi_\ell v_h\Vert^2_{1,k,\Omega_\ell} -
     \Vert \chi_\ell v_h \Vert_{1,k,\Omega_\ell} \Vert z_\ell\Vert _{1,k,\Omega_\ell}\right)  \\
  &\geq \  \sum_\ell   \Vert\chi_\ell v_h\Vert^2_{1,k,\Omega_\ell} - \sqrt{2}  \sigma   
\sum_\ell \Vert v_h \Vert_{1,k,\Omega_\ell}^2 + \mathcal{O}\left(\frac{\sigma}{k \delta} \right)\sum_\ell \Vert v_h\Vert_{1,k,\Omega_\ell}^2 . \label{last5}      
\end{align} 
The result \eqref{eq:lowerbound} then follows from using \eqref{eq:tech4} and \eqref{eq:finoverEuan1}. 
\epf
Using Theorem \eqref{thm:matrixprec}, we now convert this to a statement about matrices, 
\begin{theorem} \label{thm:main_intro} 
Let Assumption \ref{ass:basic2} hold and let  $\sigma>0$ be such that
\begin{align}  \label{localstar}
\Vert A_{\abs,\ell}^{-1} R_\ell A_\abs - R_\ell \Vert_{D_k} \ \leq \ \sigma    \ , \quad \ell = 1, \ldots, N .  
\end{align} 
Then 
\begin{align} \label{eq:normestintro} 
\Vert B_\abs^{-1} A_\abs \Vert_{D_k} \ \lesssim  \  {   \Lambda \left( 1 + {\sigma}\right)}\ .   
\end{align}
If, in addition, Assumption \ref{ass:basic3} holds then, for $k$ sufficiently large,
\begin{align}\label{eq:fovintro} 
 \min_{\bV \in \mathbb{C}^n} \, \frac{ \left\vert \langle \bV, B_\abs^{-1} A_\abs \bV\rangle_{D_k}\right\vert }
  {\Vert \bV \Vert_{D_k}^2    } \ \geq \ \left(\frac{1}{\Lambda} - \sqrt{2} \sigma \Lambda\right) + R. 
\end{align}
with $R$ satisfying \eqref{HOT}.  
\end{theorem}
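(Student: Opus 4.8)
The plan is to obtain Theorem \ref{thm:main_intro} as a direct corollary of the operator-level result Theorem \ref{thm:main1}, using Theorem \ref{thm:matrixprec} as the dictionary between finite-element functions in $\cV^h$ and their nodal vectors. First I would record the two identifications that make this work. Since $D_k = S + k^2 M$ is exactly the Gram matrix of the energy inner product $(\cdot,\cdot)_{1,k}$ on $\cV^h$, the map $v_h \mapsto \bV$ is an isometric bijection from $(\cV^h, \|\cdot\|_{1,k})$ onto $(\mathbb{C}^n, \|\cdot\|_{D_k})$, and $(u_h,v_h)_{1,k} = \langle \bU,\bV\rangle_{D_k}$; the analogous statement holds locally on each $\Omega_\ell$ with $\|\cdot\|_{1,k,\Omega_\ell}$ and the corresponding local matrix. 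By \eqref{eq:nodal} of Theorem \ref{thm:matrixprec}, the function $Q_{\abs,\ell}^h v_h$ has nodal vector $A_{\abs,\ell}^{-1}R_\ell A_\abs \bV$ on the nodes of $\Omega_\ell$, while $\Pi^h(\chi_\ell v_h)$ restricted to $\Omega_\ell$ has nodal vector $R_\ell \bV$. Hence the key quantity $z_\ell := Q_{\abs,\ell}^h v_h - \Pi^h(\chi_\ell v_h)$ of Theorem \ref{thm:main1} has, on $\Omega_\ell$, the nodal vector $(A_{\abs,\ell}^{-1}R_\ell A_\abs - R_\ell)\bV$, and its local energy norm equals the local $D_k$-norm of that vector; therefore hypothesis \eqref{localstar} is exactly hypothesis \eqref{eq:sigma} rewritten in matrix form, with the same constant $\sigma$ (and one may take $\sigma_\ell$ to be the corresponding per-subdomain operator norm).

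Next I would pass the conclusions through the same dictionary at the global level. Because $Q_\abs^h v_h = \sum_\ell \Pi^h(\chi_\ell Q_{\abs,\ell}^h v_h)$ is itself an element of $\cV^h$, identity \eqref{eq:nodal_add} (which holds for every test function $u_h$) together with the positive-definiteness of $D_k$ shows that the nodal vector of $Q_\abs^h v_h$ is $B_\abs^{-1}A_\abs \bV$. Consequently $\|Q_\abs^h v_h\|_{1,k} = \|B_\abs^{-1}A_\abs \bV\|_{D_k}$ and $(v_h, Q_\abs^h v_h)_{1,k} = \langle \bV, B_\abs^{-1}A_\abs \bV\rangle_{D_k}$, so the quotient in \eqref{eq:upperbound} is identical to the one in \eqref{eq:normestintro}, and the quotient in \eqref{eq:lowerbound} to the one in \eqref{eq:fovintro} (the extremum over $v_h \in \cV^h$ becoming the extremum over $\bV \in \mathbb{C}^n$ since $v_h\mapsto\bV$ is a bijection). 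Invoking Theorem \ref{thm:main1}(i) under Assumption \ref{ass:basic2} then yields \eqref{eq:normestintro}, and Theorem \ref{thm:main1}(ii) under the additional Assumption \ref{ass:basic3} yields \eqref{eq:fovintro}, with the remainder $R$ inheriting the bound \eqref{HOT} verbatim.

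I do not expect a substantive obstacle here: essentially all the analytic work is already contained in Theorem \ref{thm:main1} (which rests on Lemma \ref{lem:tech}, Lemma \ref{lem:estb}, Lemma \ref{cor:rate} and Lemma \ref{lem:norm_of_sum}), and Theorem \ref{thm:matrixprec} supplies the rest. The one point that requires care — and the thing I would write out in full — is the norm bookkeeping in the translation of \eqref{localstar}: one must consistently read $\|\cdot\|_{D_k}$ applied to a vector living on the nodes of $\Omega_\ell$ as the local energy norm $\|\cdot\|_{1,k,\Omega_\ell}$, and interpret the operator norm in \eqref{localstar} accordingly, so that it genuinely matches the per-subdomain bound \eqref{eq:sigma}. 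Once that convention is pinned down, the theorem follows by substitution.
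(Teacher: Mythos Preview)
Your proposal is correct and follows essentially the same route as the paper: the paper's proof of Theorem \ref{thm:main_intro} simply records the isometry $\|v_h\|_{1,k}=\|\bV\|_{D_k}$, uses Theorem \ref{thm:matrixprec} to identify the nodal vectors of $Q_{\abs,\ell}^h v_h$, $\Pi^h(\chi_\ell v_h)$, and $Q_\abs^h v_h$ with $A_{\abs,\ell}^{-1}R_\ell A_\abs\bV$, $R_\ell\bV$, and $B_\abs^{-1}A_\abs\bV$ respectively, and then reads off the conclusions of Theorem \ref{thm:main1}. Your extra remark about interpreting $\|\cdot\|_{D_k}$ on a local nodal vector as the local energy norm is well taken and makes explicit a point the paper leaves implicit.
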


\begin{proof}
First  note  that, from \eqref{eq:wip} and \eqref{eq:HelmE}, if $v_h \in \cV^h$ is a finite element function with nodal vector $\bV$, then
  $ \Vert v_h \Vert_{1,k} = \Vert \bV \Vert_{D_k}$. By Theorem \ref{thm:matrixprec}, the nodal vectors of   $Q_{\abs,\ell}^h v_h$ and $Q_\abs^h v_h$ 
   are $A_{\abs,\ell}^{-1} R_\ell A_{{\abs}} \bV$  and $B_\abs^{-1} A_\abs \bV$  respectively.  By \eqref{eq:explicit},  the nodal vector of $\Pi^h(\chi_\ell v_h) $ is $R_\ell \bV$.  Thus
\beq\label{eq:normBinvA}
 \Vert Q_\abs^h \Vert_{1,k} = \Vert B_\abs^{-1} A_\abs\Vert_{D_k} \quad \text{and} \quad \Vert Q_{\eps,\ell}^h v_h - \Pi^h(\chi_\ell v_h) \Vert_{1,k} = \Vert A_{\abs,\ell}^{-1} R_\ell A_{{\abs}} \bV - R_\ell \bV \Vert_{D_k} .
 \eeq
From these relations, and also \eqref{eq:nodal_add},  Theorem \ref{thm:main1} implies  Theorem \ref{thm:main_intro}. 
\end{proof}

This theorem immediately yields the following corollary about the convergence of GMRES.

\begin{corollary}\label{cor1_intro}
  Suppose Assumptions \ref{ass:basic2} and  \ref{ass:basic3},  hold and that \eqref{localstar} holds,  with  
  \begin{align}
    \label{eq:sigsmall}
    \sigma \ < \ \frac{1}{\sqrt{2} \Lambda^2}. 
    \end{align}
If  GMRES is applied to    
\eqref{eq:discrete} in the inner product induced by $D_k$ with $B_\eps^{-1}$ as a left   preconditioner, 
then the number of iterations needed  to achieve a   {prescribed} accuracy remains bounded as
$k \rightarrow \infty$.    
\end{corollary}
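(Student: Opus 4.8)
The plan is to read off the conclusion from the two bounds of Theorem~\ref{thm:main_intro} by invoking the classical GMRES convergence estimate of Eisenstat, Elman and Schultz \cite{EiElSc:83}. Left-preconditioned GMRES applied to \eqref{eq:discrete} in the inner product induced by $D_k$ computes, at step $m$, the iterate minimising $\N{B_\abs^{-1}(\bF-A_\abs\bU_m)}_{D_k}$ over the $m$-th Krylov space of the operator $C:=B_\abs^{-1}A_\abs$; since \cite{EiElSc:83} is valid in any Hilbert-space inner product, the residuals satisfy
\begin{equation*}
\frac{\N{B_\abs^{-1}(\bF-A_\abs\bU_m)}_{D_k}}{\N{B_\abs^{-1}(\bF-A_\abs\bU_0)}_{D_k}} \ \leq\ \left(1-\frac{c_{\mathrm{low}}^2}{C_{\mathrm{up}}^2}\right)^{m/2},
\end{equation*}
whenever $\N{C}_{D_k}\leq C_{\mathrm{up}}$ and $\dist\big(0,W_{D_k}(C)\big)\geq c_{\mathrm{low}}>0$, where $W_{D_k}(C)=\{\langle\bV,C\bV\rangle_{D_k}:\N{\bV}_{D_k}=1\}$ is the field of values of $C$ in the $D_k$-inner product. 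The first thing to observe is that $\dist\big(0,W_{D_k}(C)\big)=\min_{\bV\neq\bfzero}|\langle\bV,C\bV\rangle_{D_k}|/\N{\bV}_{D_k}^2$, i.e.\ precisely the quantity bounded from below in \eqref{eq:fovintro}; a strictly positive lower bound on it also shows $0\notin W_{D_k}(C)$, hence $C$ is invertible and GMRES cannot break down.

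Next I would substitute the estimates of Theorem~\ref{thm:main_intro}. Its first assertion \eqref{eq:normestintro} gives $C_{\mathrm{up}}=\widehat{C}\,\Lambda(1+\sigma)$ for some parameter-independent constant $\widehat{C}$. For the lower bound, \eqref{eq:fovintro} combined with \eqref{HOT} yields, for $k$ large enough,
\begin{equation*}
\dist\big(0,W_{D_k}(C)\big)\ \geq\ \left(\frac{1}{\Lambda}-\sqrt{2}\,\sigma\Lambda\right)-|R|, \qquad |R|\leq C\,\frac{\Lambda}{k\delta}\,(1+\sigma).
\end{equation*}
Under Assumption~\ref{ass:basic3} one has $k\delta\to\infty$, so $|R|\to 0$ as $k\to\infty$; since the smallness hypothesis \eqref{eq:sigsmall} makes $\tfrac1\Lambda-\sqrt2\,\sigma\Lambda>0$, there is a threshold $k_0$ (depending only on $\Lambda$, $\sigma$ and the generic constants) such that $\dist\big(0,W_{D_k}(C)\big)\geq c_{\mathrm{low}}:=\tfrac12\big(\tfrac1\Lambda-\sqrt2\,\sigma\Lambda\big)>0$ for every $k\geq k_0$.

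Finally, since $\Lambda$ and $\sigma$ do not depend on $k$, neither does $\rho:=c_{\mathrm{low}}/C_{\mathrm{up}}=\big(\tfrac1\Lambda-\sqrt2\,\sigma\Lambda\big)\big/\big(2\widehat{C}\,\Lambda(1+\sigma)\big)\in(0,1]$ (that $\rho\leq1$ is automatic, since $\dist(0,W_{D_k}(C))\leq\N{C}_{D_k}$), so $1-\rho^2\in[0,1)$. The displayed bound then shows that, for all $k\geq k_0$, the number of GMRES iterations required to reduce the preconditioned residual by a prescribed factor $\mathrm{tol}\in(0,1)$ is at most $\ceil{2\ln(\mathrm{tol})/\ln(1-\rho^2)}$ (interpreted as $1$ in the degenerate case $\rho=1$), a bound independent of $k$; the assertion being asymptotic in $k$, this completes the argument. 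I do not expect a genuine obstacle here: the corollary is essentially a repackaging of Theorem~\ref{thm:main_intro}. The only steps needing a line of justification are that the estimate of \cite{EiElSc:83} transfers verbatim from the Euclidean to the $D_k$-weighted inner product, and the identification of the modulus lower bound in \eqref{eq:fovintro} with the distance of the field of values from the origin; both are routine.
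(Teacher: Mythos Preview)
Your proposal is correct and follows exactly the same approach as the paper's own proof, which is simply the one-liner ``This follows directly from Theorem~\ref{thm:main_intro} and the GMRES convergence theory in \cite{EiElSc:83}.'' You have merely spelled out the details of that sentence: applying \cite{EiElSc:83} in the $D_k$-inner product, reading off $C_{\mathrm{up}}$ from \eqref{eq:normestintro}, and using Assumption~\ref{ass:basic3} together with \eqref{eq:sigsmall} to ensure the lower bound \eqref{eq:fovintro} is eventually positive and $k$-independent.
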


\bpf
This follows directly from Theorem \ref{thm:main_intro} and the GMRES convergence theory in \cite{EiElSc:83}.
\epf
As explained above, Assumptions  \ref{ass:basic2} and \ref{ass:basic3} are quite mild requirements.  
However \eqref{eq:sigsmall} is a stronger constraint and {may lead to}  restrictions  on  $\abs$ and $H$.
Essentially it  says 
that for each $\ell$, the   ``local impedance solve''  $A_{\abs, \ell}^{-1} $  should be 
a sufficiently good left inverse for   $A_\abs$ when it is restricted to  $\overline{\Omega_\ell}$.
In the following corollary, whose proof follows  from Corollary \ref{cor:scenario}, Part (i)   gives conditions under which $\sigma$ can be bounded (hence useful for the upper bound \eqref{eq:normestintro}), while Parts (ii) and (iii) give conditions for  $\sigma$ to be  small
(and hence are  relevant to ensuring \eqref{eq:fovintro}).}

\begin{corollary}  \label{cor:sigma}  
Let the assumptions of Theorem \ref{thm:main1} hold.

{\noi (i) Assume that $h\leq \hbarkr$, and each $\Omega_\ell$ is starshaped with respect to a ball uniformly in $\ell$.
 Then, for all $\abs$ with ${0\leq}\vert \abs \vert \leq k^2$,    we have $\sigma \ \lesssim \ H/\delta$.}

\noi (ii) If $|\abs|>0$, $\abs \sim  k^{1+\beta} $ for $0<\beta<1$, $\delta\sim H \sim  k^{-\alpha}$ for $0<\alpha<1$, then 
$
\sigma\ \lesssim\   k^{ \alpha-\beta}.
$

\noi (iii) If $|\abs| >  0$  and $\delta$ is fixed, then there exist  constants  $C$ and $k_0$ so that when $\abs = Ck$ and $k\geq k_0$,
$$
\sigma \  {\leq} \ \frac{1}{{2}\sqrt{2} \Lambda^2 } . 
$$
\end{corollary}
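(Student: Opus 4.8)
The plan is to obtain each of the three bounds as an immediate consequence of Corollary~\ref{cor:scenario} (equivalently Lemma~\ref{cor:rate}), which already exhibits, for each $\ell$, an admissible value $\sigma_\ell$ in the sense of \eqref{eq:sigma}; one then sets $\sigma=\max_\ell\sigma_\ell$ and simplifies using the structural relations between $k,h,H$ and $\delta$ (in particular $k\delta\gtrsim1$ from Assumption~\ref{ass:basic2}, $\delta\lesssim H$, $H_\ell\le H$, and $\delta=\min_m\delta_m\le\delta_\ell$). Since $Q^h_{\abs,\ell}v_h$ and $\Pi^h(\chi_\ell v_h)$ depend only on $v_h|_{\overline{\Omega_\ell}}$ and the norms in \eqref{eq:sigma} are local to $\Omega_\ell$, it suffices to apply Corollary~\ref{cor:scenario} to $v_h\in\cV^h_\ell$, and the resulting $\sigma$ also bounds the quantity in \eqref{localstar}, since the local norm is dominated by the global one.

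For (i), the hypotheses $h\le\hbarkr$ and uniform starshapedness let us invoke Corollary~\ref{cor:scenario}(ii), so $\sigma_\ell\lesssim H_\ell/\delta_\ell+1/(k\delta_\ell)$ uniformly in $\ell$. Since $H_\ell\le H$ and $\delta\le\delta_\ell$, the first term is $\le H/\delta$; and since $k\delta\gtrsim1$ together with $\delta\lesssim H$ gives $kH\gtrsim1$, the second term satisfies $1/(k\delta_\ell)\le1/(k\delta)=(1/(kH))(H/\delta)\lesssim H/\delta$. Hence $\sigma=\max_\ell\sigma_\ell\lesssim H/\delta$.

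For (ii) and (iii) we use instead Corollary~\ref{cor:scenario}(i), valid whenever $|\abs|>0$, which gives $\sigma_\ell\le c_0\,k/(|\abs|\delta_\ell)$ with $c_0$ independent of all parameters — and, crucially, of $\Lambda$, since it originates in Lemma~\ref{cor:rate}, whose proof does not involve $\Lambda$. In case (ii), $\delta\sim H\sim k^{-\alpha}$ forces $\delta_\ell\sim k^{-\alpha}$ for each $\ell$ (as $k^{-\alpha}\sim\delta\le\delta_\ell\lesssim H\sim k^{-\alpha}$), so $k/(|\abs|\delta_\ell)\sim k\cdot k^{-(1+\beta)}\cdot k^{\alpha}=k^{\alpha-\beta}$, whence $\sigma\lesssim k^{\alpha-\beta}$; one should also record that the hypotheses here are compatible with Assumptions~\ref{ass:basic} and~\ref{ass:basic2} ($k^{1+\beta}\le k^2$ since $\beta<1$, and $k\delta\sim k^{1-\alpha}\to\infty$ since $\alpha<1$). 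In case (iii), $\delta\le\delta_\ell$ and $\abs=Ck$ give $\sigma\le c_0/(C\delta)$; since $\delta$ and $\Lambda$ are now fixed, taking $C:=\max\{1,\,2\sqrt2\,\Lambda^2 c_0/\delta\}$ forces $\sigma\le1/(2\sqrt2\,\Lambda^2)$, and then $k_0:=\max\{C,\,c_1/\delta\}$ — where $c_1$ is the constant implicit in the requirement $k\delta\gtrsim1$ — guarantees that Assumptions~\ref{ass:basic} and~\ref{ass:basic2} hold for $k\ge k_0$ with this choice of $\abs$.

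There is no serious obstacle here: each part is a one-line computation off Corollary~\ref{cor:scenario}. The only points needing care are the passage from the per-subdomain quantities $H_\ell/\delta_\ell$ and $k/(|\abs|\delta_\ell)$ to the global quantities $H/\delta$ and $k/(|\abs|\delta)$, the absorption of the stray $1/(k\delta_\ell)$ term in part~(i) via $kH\gtrsim1$, and — in part~(iii) — confirming that the hidden constant controlling $\sigma$ is genuinely $\Lambda$-independent, so that the threshold $C$ may legitimately be chosen to depend on $\Lambda$ and on the fixed $\delta$, and checking that a valid $k_0$ still exists even though $C$ itself depends on $\delta$.
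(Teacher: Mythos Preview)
Your proposal is correct and follows the same approach as the paper, which simply states that the corollary ``follows from Corollary~\ref{cor:scenario}'' without further detail. You have carefully filled in the bookkeeping (passing from $H_\ell,\delta_\ell$ to $H,\delta$, absorbing $1/(k\delta_\ell)$ via $kH\gtrsim 1$, and verifying the $\Lambda$-independence of the constant in Lemma~\ref{cor:rate} so that $C$ in part~(iii) may depend on $\Lambda$), all of which is correct.
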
 
Using the bounds of Corollary \ref{cor:sigma} in Theorem \ref{thm:main_intro}, we obtain the
following results about $B_\abs^{-1} A_\abs$.

\begin{corollary}[Upper bound on  the norm of $B_\abs^{-1}A_\abs$]\label{cor:upper_bound}
Assume that 
$h\leq \hbarkr$, and each $\Omega_\ell$ is starshaped with respect to a ball uniformly in $\ell$.
Assume that $\delta\sim H$.  Then, {for all $0\leq |\abs|\leq k^2$}, 
\beqs
\Vert B_\abs^{-1} A_\abs \Vert_{D_k} \ \lesssim \ 1 . 
\eeqs
\end{corollary}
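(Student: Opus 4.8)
The plan is to derive the bound by feeding Corollary~\ref{cor:sigma}(i) into the norm estimate \eqref{eq:normestintro} of Theorem~\ref{thm:main_intro}; no new analysis is required, since the substantive work --- the $k$- and length-scale-explicit local solvability bound of Theorem~\ref{thm:estimate}(ii) (resting on Theorem~\ref{thm:IIP_M} and the discrete inf-sup inequality \eqref{eq:infsup}), and the commutator estimate of Lemma~\ref{cor:rate} --- has already been carried out. First I would verify that the hypotheses of the cited results are in force: the uniform starshapedness of the $\Omega_\ell$ and $h\le\hbarkr$ are assumed directly, while Assumption~\ref{ass:basic2} holds because $k\delta\gtrsim 1$ is a standing assumption of \S\ref{sec:norm_fov} and $kh\lesssim 1$ follows from $h\le\hbarkr$ (by Remark~\ref{rem:accuracy}, $\hbarkr$ forces $h$ to be a sufficiently small multiple of $k^{-r/(r-1)}$ with $r\ge2$, so $kh\lesssim k^{-1/(r-1)}\lesssim1$).

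Next I would invoke Corollary~\ref{cor:sigma}(i), which gives, for all $0\le|\abs|\le k^2$, the bound $\sigma\lesssim H/\delta$ on the constant $\sigma$ appearing in \eqref{localstar} (equivalently, via \eqref{eq:normBinvA}, on $\N{A_{\abs,\ell}^{-1}R_\ell A_\abs - R_\ell}_{D_k}$, uniformly in $\ell$). The one point worth a line of care is the uniformity of this bound as $|\abs|\to0$: in Lemma~\ref{cor:rate} the factor $\Theta(\abs,H_\ell,k)=\min\{1+kH_\ell,\,k^2/|\abs|\}$ is bounded by $1+kH_\ell$, and since $k\delta_\ell\gtrsim1$ with $\delta_\ell\lesssim H_\ell$ forces $kH_\ell\gtrsim1$, the term $(k\delta_\ell)^{-1}\Theta(\abs,H_\ell,k)$ is $\lesssim H_\ell/\delta_\ell\le H/\delta$; this is precisely why the starshapedness hypothesis and $h\le\hbarkr$ (rather than merely $|\abs|>0$) are imposed. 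With $\delta\sim H$ this yields $\sigma\lesssim1$.

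Plugging $\sigma\lesssim1$ into \eqref{eq:normestintro} gives
\beqs
\N{B_\abs^{-1}A_\abs}_{D_k}\ \lesssim\ \Lambda(1+\sigma)\ \lesssim\ \Lambda\ \lesssim\ 1,
\eeqs
the last step because the finite-overlap parameter $\Lambda$ of \eqref{eq:finoverlap} is a fixed finite constant, independent of $k,h,H,\delta,\abs$. I note that the norm estimate in Theorem~\ref{thm:main_intro} does not invoke Assumption~\ref{ass:basic3}, consistent with the absence here of any hypothesis of the form $k\delta\to\infty$. There is no genuine obstacle in this corollary: it is essentially bookkeeping, and the only mildly delicate points are (a) that all implicit constants in Lemma~\ref{cor:rate} and Corollary~\ref{cor:scenario} are uniform in $\ell$ --- which is already part of those statements --- and (b) the elementary observation made above that $\min\{1+kH_\ell,\,k^2/|\abs|\}$ stays harmless as $|\abs|\to0$.
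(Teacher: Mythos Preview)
Your proposal is correct and follows exactly the paper's own (one-line) argument: combine Corollary~\ref{cor:sigma}(i), which yields $\sigma\lesssim H/\delta\sim 1$, with the norm bound \eqref{eq:normestintro} of Theorem~\ref{thm:main_intro}. One minor inaccuracy: $k\delta\gtrsim 1$ is not a standing assumption of \S\ref{sec:norm_fov} (the paper says Assumption~\ref{ass:basic2} will be stated when needed), but since both Theorem~\ref{thm:main_intro} and Corollary~\ref{cor:sigma} explicitly require it, Corollary~\ref{cor:upper_bound} should be read as inheriting it, and your argument is unaffected.
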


\begin{corollary}[Lower bound on the distance of the field of values from the origin]\label{cor:lower_bound}

\noi (i) If $|\abs| \sim  k^{1+\beta} $ for $0<\beta<1$, $\delta\sim H$, and $H \sim  k^{-\alpha}$ for $0<\alpha<1$ then 
\beqs
\quad  \min_{\bV \in \mathbb{C}^n}
\frac{
  \big|\langle \bV, B_\abs^{-1} A_\abs \rangle_{D_k}
  \big|
}{
\Vert \bV \Vert ^2_{D_k} 
}\ \geq \  1 - \cO(k^{\alpha-\beta}) ,  \quad \text{as} \quad k\tendi.
\eeqs

\noi (ii) If $\delta$ is fixed, then there exist constants  $C$ and $k_0$ so that when ${|}\abs{|} = Ck$ and $k\geq k_0$,
  \beqs
\min_{\bV \in \mathbb{C}^n}
\frac{
  \big|\langle \bV, B_\abs^{-1} A_\abs \rangle_{D_k}
  \big|
}{
\Vert \bV \Vert ^2_{D_k} 
}\ \geq \  \frac{1}{2\Lambda} \ . 
\eeqs
\end{corollary}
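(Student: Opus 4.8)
Both parts follow by inserting the bounds on $\sigma$ from Corollary~\ref{cor:sigma} into the field-of-values estimate \eqref{eq:fovintro} of Theorem~\ref{thm:main_intro}. The plan is therefore: (a) check that the hypotheses of Theorem~\ref{thm:main_intro}, namely Assumptions~\ref{ass:basic2} and~\ref{ass:basic3}, hold in each regime; (b) substitute the relevant bound on $\sigma$; and (c) verify that the remainder $R$ in \eqref{eq:fovintro}, controlled by \eqref{HOT}, is dominated by the resulting lower bound. For (a): apart from $k\delta\gtrsim 1$, Assumption~\ref{ass:basic2} is a standing hypothesis, and $k\delta\gtrsim 1$ holds in case (i) because $\delta\sim H\sim k^{-\alpha}$ with $\alpha<1$ gives $k\delta\sim k^{1-\alpha}\tendi$, and holds trivially in case (ii) where $\delta$ is fixed; these same estimates give Assumption~\ref{ass:basic3}. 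Since $|\abs|>0$ in both parts ($|\abs|\sim k^{1+\beta}$, resp.\ $|\abs|=Ck$), we work with the branch $\Theta(\abs,H_\ell,k)=k^2/|\abs|$ in Theorem~\ref{thm:estimate}, so neither part needs star-shapedness of the $\Omega_\ell$ or the mesh threshold $\hbarkr$.

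For part (i), Corollary~\ref{cor:sigma}(ii) gives $\sigma\lesssim k^{\alpha-\beta}$, so the leading term in \eqref{eq:fovintro} is $\tfrac{1}{\Lambda}-\sqrt{2}\,\sigma\Lambda=\tfrac{1}{\Lambda}-\cO(k^{\alpha-\beta})$, while by \eqref{HOT}, $|R|\leq C\Lambda(k\delta)^{-1}(1+\sigma)\lesssim k^{\alpha-\beta}$ (using $k\delta\sim k^{1-\alpha}$, $\sigma\lesssim k^{\alpha-\beta}$ and $\alpha,\beta<1$). Hence $R$ is absorbed into the $\cO(k^{\alpha-\beta})$ term and the claimed bound follows as $k\tendi$.

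For part (ii), observe first that for fixed $\delta$ enlarging the constant $C$ in $\abs=Ck$ only improves the local solver — Corollary~\ref{cor:scenario}(i) gives $\sigma\lesssim(C\delta)^{-1}$ — so, strengthening Corollary~\ref{cor:sigma}(iii) by choosing $C$ large enough, we may ensure $\sqrt{2}\,\sigma\Lambda\leq\tfrac{1}{4\Lambda}$ for all $k\geq k_0$; then the leading term in \eqref{eq:fovintro} is at least $\tfrac{3}{4\Lambda}$. Since $\delta$ is fixed and $\sigma$ bounded, \eqref{HOT} gives $|R|\leq C'\Lambda(k\delta)^{-1}\to 0$, so after enlarging $k_0$ if necessary $|R|\leq\tfrac{1}{4\Lambda}$, and \eqref{eq:fovintro} gives the bound $\tfrac{3}{4\Lambda}-\tfrac{1}{4\Lambda}=\tfrac{1}{2\Lambda}$.

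This is essentially a bookkeeping consequence of Theorem~\ref{thm:main_intro} and Corollary~\ref{cor:sigma}, so there is no serious obstacle; the only points needing care are the two remainder estimates — in (i) that the exponent $\alpha-1$ is beaten by $\alpha-\beta$, which is precisely where $\beta<1$ is used, and in (ii) that one may force both $\sqrt{2}\,\sigma\Lambda$ and $|R|$ below $\tfrac{1}{4\Lambda}$ by taking $C$ and then $k_0$ sufficiently large.
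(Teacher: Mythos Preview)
Your proof is correct and follows exactly the route the paper intends: substitute the $\sigma$-bounds from Corollary~\ref{cor:sigma} into the field-of-values estimate \eqref{eq:fovintro} of Theorem~\ref{thm:main_intro}, and check that Assumptions~\ref{ass:basic2} and~\ref{ass:basic3} hold and that the remainder $R$ is absorbed. Your handling of part~(ii) --- strengthening the choice of $C$ so that $\sqrt{2}\sigma\Lambda\le\tfrac{1}{4\Lambda}$, leaving room to absorb $R$ --- is exactly what is needed to land on the stated constant $\tfrac{1}{2\Lambda}$; note also that in part~(i) the argument (yours and the paper's) actually yields $\tfrac{1}{\Lambda}-\cO(k^{\alpha-\beta})$, so the ``$1$'' in the displayed statement is a typo for $\tfrac{1}{\Lambda}$.
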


\bre[Right preconditioning]\label{rem:leftright}{The results in \cite[Theorem 5.8]{GrSpVa:17} -- see also  \cite[{\S3}]{GrSpVa:17a}} --  show how results about right
preconditioning {(working in the $D_k^{-1}$ inner product)} can be obtained from analogous results about left preconditioning of the adjoint problem {(working in the $D_k$ inner product)}. 
The results in \S\S\ref{sec:fem_and_ddm}, \ref{sec:3.1}, and \ref{sec:3.2} all hold when the problem \eqref{eq:PDE}, \eqref{eq:ImpBC} is replaced by its adjoint (see, in particular, Remark \ref{rem:adj}); therefore the results in this section about left preconditioning {(in the $D_k$ inner product)} also hold
for right preconditioning {(in the $D_k^{-1}$ inner product)}.
\ere

\bre[Dirichlet boundary conditions]\label{rem:localD}
Some parts of the analysis presented in this paper hold in the case when the boundary conditions on the subdomains are changed from impedance to Dirichlet, i.e.~when the integral over $\partial \Omega_\ell\setminus \Gamma_D$ is removed from \eqref{eq:localimp}. However, Parts (ii) of Theorem \ref{thm:estimate} and Corollary \ref{cor:scenario} no longer hold in this case. {{Additionally,}  the upper bound on the norm for  $\eps=0$ in Corollary \ref{cor:upper_bound} does not hold either. We see in Experiment \ref{expt6} that when the impedance boundary conditions are replaced by Dirichlet, the preconditioner performs poorly for the pure Helmholtz equation. }
\ere

\section{Numerical Experiments} 
\label{sec:Numerical}

In this section we give numerical  experiments illustrating  the 
performance of   the preconditioners defined in \S \ref{subsec:prec} and analysed in \S  \ref{sec:norm_fov}. 
We consider  Problem \eqref{eq:PDE}-\eqref{eq:ImpBC} with $\Omega$ being the 
unit square in 2-d. We  first choose a uniform coarse mesh $\cT^H$ of equal square elements of 
side length  $H = 1/M$ on $\Omega$.   
Let $\bx_{\ell,m} =  (\ell H , m H), \  \ell, m = 0, \ldots , M$ denote the  coarse  mesh nodes. 
We introduce subdomains $ \Omega_{\ell,m} $,
defined to be interior of  the union of all the coarse mesh elements that touch $\bx_{\ell,m}$, for  $\ell, m = 0, \ldots , M$.    
These subdomains have generous overlap in the sense of  \eqref{eq:unifoverlap}. 
Let 
$\chi_{\ell,m}$ denote the piecewise bilinear nodal basis functions with respect to the coarse mesh, 
i.e. $\chi_{\ell,m}$ is bilinear with respect to the coarse mesh and  $\chi_{\ell,m} (\bx_{\ell', m'}) = \delta_{{\ell- \ell'},{m - m'}}$.   Then $\{ \chi_{\ell,m} : \ell, m = 0, \ldots ,M\}$ form a 
partition of unity and we use this to define the preconditioner \eqref{eq:ASpc}.   

\noindent {\bf   Illustration of the notation used in the proof of \eqref{eq:tech3}}:   \  
Recalling \eqref{POUstar},  we can see that for each $\ell,m \in \{ 0, \ldots, M\}$,
$\supp \chi_{\ell,m}  \subseteq \overline{\Omega_{\ell,m}}$. Moreover, for $\bx \in \oOmega$ and $m(\bx)$ defined by \eqref{eq:defm}, we have 
\begin{align*}
  m(\bx ) = 1 \quad & \text{when} \  \bx \ \text{is a node} \quad \bx \in \{ \bx_{\ell,m} : \ \ell, m \in \{ 0,\ldots,  M\} \}\\
  m(\bx) = 2 \quad & \text{when} \ \bx \ \text{is an interior point of any edge of the coarse mesh}\\
  m(\bx) = 4 \quad & \text{when} \ \bx \ \text{is an interior point of any coarse mesh element} 
\end{align*}
Hence  $\Lambda = 4$. Note that  $D_1$ contains all the nodes of the coarse grid, $D_2$ contains all interior points of edges of the coarse grid and $D_4$ contains all interior points of coarse grid elements. 
Note  $\mu(D_1) = \mu(D_2) = 0$, with $\mu$ denoting Lebesgue measure,  and  $\mu(D_4) = \mu(\Omega). $
Moreover the index sets $\cD(1), \cD(2) $ and $\cD(4)$ actually contain all indices $(\ell,m)$ with $\ell, m \in \{ 0, \ldots , M\}$.


The coarse mesh is then refined uniformly to obtain a fine triangular mesh 
$\cT^h$. The space $\cV^h$ which is used to obtain the linear system \eqref{eq:discrete} is  the  
space of piecewise-linear finite-element functions on $\cT^h$.  The linear system \eqref{eq:discrete} is therefore 
characterised by two parameters: 
the fine mesh diameter $h$  and $\eps$ in \eqref{eq:PDE} denoted by 
$ \hprob \ \text{and} \  \epsprob$
respectively.
In all the experiments here we choose $h \sim k^{-3/2}$ (the level of 
refinement generally believed to keep the relative error of the finite-element solution bounded independently of $k$ as $k\tendi$; see Remark \ref{rem:accuracy}). Although these are 2d problems, the dimension $n = (k^{3/2})^2 = k^3$ of the systems grows very quickly with $k$ , and is  well over $10^6$ when $k = 140$ (considered below).
The preconditioner  is characterised by the coarse grid diameter and the  level of absorption used, denoted by
$ H \ \text{and} \  \epsprec$
respectively. 

In Experiments   \ref{expt2} and \ref{expt3}, we verify the theory by illustrating the performance of the preconditioner on some problems with $\epsprob >0$. In Experiments
\ref{expt4}, \ref{expt5}, and \ref{expt6}, we solve the ``pure Helmholtz" problem, i.e.~$\epsprob =0$. 
Unless otherwise stated, the data   
${f}, g $ in \eqref{eq:discrete} is chosen so that the exact solution of \eqref{eq:vp} - \eqref{eq:Helmholtzvf_intro} is a plane wave $u(x) = \exp(\ri k x.\widehat{d})$ where $\widehat{d} = (1/\sqrt{2}, 1/\sqrt{2})^\top$. Note that oscillations in the solution are resolved by the fine grid but are not resolved by the subdomains. 
We choose $\Gamma_D=\emptyset$, so that $\Gamma=\Gamma_I$.
Except in Experiment \ref{expt5},   the initial guess for GMRES 
is chosen to be a random 
(uniformly distributed in $[0,1]^m$) vector in $\mathbb{R}^n$. 
In all cases the GMRES stopping criterion  is based on requiring the initial residual to be reduced by $10^{-6}$. 
Standard GMRES (with  residual minimisation in the Euclidean norm)  is used,  even though the estimates in 
Theorem \ref{thm:main_intro} are with respect to the norm induced by $D_k$; the numerical experiments in \cite{GrSpVa:17} and  in \cite{BoDoGrSpTo:17c}  (for a similar method) found the iteration counts to be essentially identical when minimisation in the Euclidean norm is replaced by minimisation in the norm induced by $D_k$. 


\begin{experiment} \label{expt2} 
We choose
\begin{align} \label{eq:case1} 
\hprob  \sim k^{-3/2}, \quad \epsprob = \epsprec= k^{1 + \beta}, \  \Hprec = k^{-\alpha} \ ,  \quad \text{where} \quad \beta =  \alpha + 0.1.   \end{align} 
\end{experiment} 
Corollary \ref{cor:sigma}  predicts a wavenumber-independent iteration count for GMRES and this  
behaviour is clearly visible in Table \ref{tab:1} (left).   
Reading across this table, for fixed $k$,  larger    $\alpha$ corresponds to smaller subdomains (and thus the   
preconditioner becomes cheaper per iterate). The number of iterations increases (slightly) as $\alpha$ increases but remains bounded as $k$ increases for fixed $\alpha$. We also note that if we read diagonally across Table \ref{tab:1}(a) (thus increasing the rate of decrease of $H$ as $k$ increases)  we see roughly  logarithmic growth in the number of iterations, although the analogous growth is somewhat faster  in later tables.

\begin{table}[ht]
\centering
\subfloat[Subtable 1 list of tables text][GMRES iterations for case  \eqref{eq:case1}]
{
\begin{tabular}{|c|cccc|}
\hline 
$k\backslash \alpha$ &0.2 &0.3 &0.4 &
 0.5\\
\hline 
40 &4  &6 & 7& 9\\
60 & 4 &5 &7&10\\
80 &3 & 6&8 &9\\
100&5 & 6 & 7 & 9\\
120 & 4 & 5 & 7&9\\
140 & 4 &5 & 7&9\\
\hline 
\end{tabular} 
}
\qquad
\subfloat[Subtable 2 list of tables text][GMRES iterations for  case \eqref{eq:caseA}.]{
\begin{tabular}{|c|cccc|}
\hline 
$k\backslash \alpha$ &0.2 &0.3 &0.4 &
 0.5\\
\hline 
40 &4 & 7 & 10& 17\\
60 & 4 & 7 & 12 & 22\\
80 & 4 & 9 & 13 & 21\\
100& 6 & 8 & 13 & 23\\
120 & 5 & 8 & 15 & 24\\
140 & 5 & 7 & 13 & 25 \\
\hline 
\end{tabular} 
}
\caption{\label{tab:1}}
\end{table}


Based on Experiment \ref{expt2}, and recalling the discussion in the introduction (in particular Equation \eqref{eq:c}), we now investigate how well the preconditioner performs when we reduce the absorption in the problem being solved to $\epsprob = k$.

\noindent 
\begin{experiment}  \label{expt3} We choose
\begin{align} \label{eq:caseA} 
\hprob  \sim k^{-3/2}, \quad \epsprob = \epsprec=k, \ \tand \  \Hprec = k^{-\alpha}.    \end{align} 

       
\end{experiment} 
Comparing Tables \ref{tab:1} (left and right), we see an increase in the iteration 
numbers (especially for larger $\alpha$) but growth with $k$ appears to be avoided provided  $\alpha \leq 0.4$.   
This shows that   $B_k^{-1}$ is a good preconditioner for $A_k$ and so by the heuristic argument centred on \eqref{eq:c}, we expect 
$B_k^{-1}$  to be  good preconditioner for $A$.  Experiment \ref{expt4} shows this to be true.
Here $\epsprob$ is reduced from $k$ to $0$; we  see a slight increase in iteration numbers compared to  $\epsprob  = k$,
but still apparent robustness to increasing  $k$,  for fixed    $\alpha \leq 0.4$.     

\begin{experiment} \label{expt4}  We choose
\ \begin{align} \label{eq:case4} 
\hprob  \sim k^{-3/2}, \ \epsprob = 0, \ \tand \ \Hprec = k^{-\alpha}.
\end{align} 


\begin{table}[h] 
\begin{center} 
\begin{tabular}{|c||cccc||cccc|}
\multicolumn{1}{c}{} & \multicolumn{4}{c}{$\epsprec = k$} & \multicolumn{4}{c}{$\epsprec = 0$}\\
\hline 
$k\backslash \alpha$ &  0.2& 0.3 &0.4 & 0.5 & 0.2&  0.3 &0.4 & 0.5\\
\hline 
40 &6  &8 &12& 20 &5  &8 & 11&19  \\
60 & 5  &8 &14 & 25 &5  &7& 14&25 \\
80 & 5  &10  &15  & 25 & 4  &10 &15 & 24  \\
100 & 7  &9 &15 & 27  & 7  &9 &15 & 27  \\
120 & 6 & 9 & 17 & 29 & 6 &9 & 17 &29\\
140 & 6&9&17& 31 & 6 &8 & 16 & 31 \\
\hline 
\end{tabular} 
\vspace{0.2cm} 
\caption{Number of GMRES iterations  for the case \eqref{eq:case4}. 
 \label{tab:4} } 
\end{center} 
\end{table} 

\end{experiment} 

We make two observations  from the results of Experiments \ref{expt2}-\ref{expt4}.
\ben
\item
The one-level Schwarz method provides an optimal 
preconditioner for the pure Helmholtz problem; -- the iteration numbers appear  bounded independently 
of $k$ (and hence $n$) as $ k $ increases --  provided the subdomain diameter does not shrink 
too quickly. Robustness is maintained when the subdomain diameters  shrink no faster than   $\mathcal{O}(k^{-0.4})$. 
\item The performance of the  preconditioner is virtually the same 
whether  it is built from the absorptive system  $\epsprec = k$ or from the pure Helmholtz 
system $\epsprec = 0$. Whilst the results of the present paper give theoretical support for the observed robustness when $\epsprec = k$,  (see the discussion in \S\ref{sec:intro} and Appendix \ref{sec:app1}); with existing theoretical tools it seems very difficult to prove results for the case $\epsprec = 0$.  
\een


\begin{experiment} \label{expt5}
As a more extreme case we consider  subdomains which  are
fixed as $h \rightarrow 0$. While this is not a practical method (the subproblems have the same order of complexity as the global problem), 
  it can provide a useful starting point for methods based on recursive
    application of the one-level method, as described in \S \ref{sub:pract}. We therefore consider:
\begin{align} \label{eq:case5} h \sim k^{-3/2}, \ \epsprob = 0,\  H = 1/M. \end{align}  
In the left-hand panel of Table \ref{tab:5}, $\epsprec = k$ and  a random starting guess is chosen.
In the middle panel, $\epsprec = 0$ and a random starting guess is chosen.
In the right-hand panel,  $\epsprec = k$ and a  zero starting guess is chosen.  
Again, there is little effect from 
switching off the absorption in the preconditioner.  Surprisingly  a random starting guess leads to consistently 
lower iteration counts than a zero starting guess; we have no explanation for this  observation.  
\begin{table}[h]
\begin{center} 
\begin{tabular}{|c||ccc||ccc||ccc|}
\hline 
\multicolumn{1}{|c|}{} & \multicolumn{3}{|c|}{random} & \multicolumn{3}{|c|}{random} & \multicolumn{3}{|c|}{ zero  }\\
\multicolumn{1}{|c|}{} & \multicolumn{3}{|c|}{starting guess} & \multicolumn{3}{|c|}{starting guess} & \multicolumn{3}{|c|}{ starting guess }\\
\multicolumn{1}{|c|}{} & \multicolumn{3}{|c|}{$\epsprec = k$} & \multicolumn{3}{|c|}{$\epsprec = 0$} & \multicolumn{3}{|c|}{ $\epsprec = k$}\\
\hline 
$k\backslash M$ &  4 & 8 & 16 &4 & 8 & 16 & 4 & 8 & 16 \\
\hline 
40 & 12 & 27 & 61 & 11 & 27 & 61 & 16 & 36 & 82  \\
60 &11  &  25 & 56 & 10 & 25 & 56 & 15 & 36 & 81\\
80 & 10 & 22 &52   & 10 & 22 & 52 & 15 & 33 & 75 \\
100 & 9 & 21 & 48 & 9 & 21 & 48 & 15 & 33 & 71 \\
120 &9 & 20  &45 & 9 & 20 & 45 & 15 & 31 & 69 \\
140 &8  &18 &41  & 8 & 18 & 41 & 14 & 31 & 70 \\
\hline 
\end{tabular} 
\vspace{0.2cm} 
\end{center} 
\caption{Number of  GMRES iterations for the case \eqref{eq:case5} \label{tab:5}} 
\end{table}

\end{experiment}  

Finally  we study the effect of changing the boundary condition on the subdomains from Impedance to Dirichlet (recall Remark \ref{rem:localD}). 

\begin{experiment} \label{expt6}  We choose Dirichlet conditions on subdomains with 
\ \begin{align} \label{eq:case6} 
\hprob  \sim k^{-3/2}, \ \epsprob = 0, \   \tand \ \Hprec = k^{-\alpha}.
  \end{align}

In Table \ref{tab6} we see that this yields a  
very poor preconditioner  for the pure Helmholtz problem (compare Experiment \ref{expt6} with Experiment \ref{expt4}).
Similar observations are made in \cite{GrSpVa:17}, where coarse grids were also used to improve the robustness.

\begin{table}[h] 
\begin{center} 
\begin{tabular}{|c||cccc||cccc|}
\multicolumn{1}{c}{} & \multicolumn{4}{c}{$\epsprec = k$} & \multicolumn{4}{c}{$\epsprec = 0$}\\
\hline 
$k\backslash \alpha$ &  0.2& 0.3 &0.4 & 0.5 & 0.2&  0.3 &0.4 & 0.5\\
\hline 
10 & 7 & 7 & 12 & 12 & 6 & 6 & 15 & 15\\
20 & 7 & 7 & 17 & 25 & 5 & 5 & 20 & 29\\
40 & 6 & 16 & 34 & 86 & 5 & 22 & 43 & 110 \\
60 & 6 & 16 & 68 & 102 & 5 & 25 & 83 & 121\\
80 & 5 & 46 & 127 & 239 & 5 & 78 & 173 & 256 \\
100 & 14 & 58 & 130 & 242 & 22 & 121 & 222 & 429 \\
\hline 
\end{tabular} 
\vspace{0.2cm} 
\caption{Number of GMRES iterations  for the case  \eqref{eq:case6} with homogeneous Dirichlet condition on subdomain boundaries  
\label{tab6} } 
\end{center} 
\end{table} 
\end{experiment} 
\begin{appendix}
\section{A rigorous basis for the discussion around \eqref{eq:c}}\label{sec:app1}
\begin{lemma} \label{lem:A1} 
  Let  $(\cdot, \cdot)$ be an inner product with associated norm $\Vert \cdot\Vert$.
Assume that \eqref{eq:GGS} holds with $\|\cdot\|_2$ replaced by $\|\cdot\|$ and with $K>0$ independent of $\abs$ and $k$.
Assume also that for all
  $\abs$ in some  neighbourhood of the origin, there exist positive numbers  $C_1(\abs)$  and $C_2(\abs)$
  (which may depend on $\abs$
  but are independent of all other parameters), such that
  \begin{align}
    \Vert B_\abs^{-1} A_\abs \Vert \ \leq C_1(\abs) ,
\quad\tand\quad
    \frac{\vert (\bV, B_\abs^{-1} A_\abs \bV)\vert}{\Vert \bV \Vert^2} \ \geq \ C_2(\abs)      \quad  \text{for all} \quad \bV \in \mathbb{C}^n .
    \label{eq:A2}
    \end{align}
    Then
    \begin{align}\label{eq:A3} \Vert B_\abs^{-1} A \Vert \ \leq \ C_1(\abs) \left( 1+ K \frac{|\abs|}{k}\right) \quad\tand\quad
    \frac{\vert (\bV, B_\abs^{-1}A \bV ) \vert} {\Vert \bV \Vert^2 } \ \geq \ C_2(\abs) - K \, C_1(\abs) \frac{|\abs|}{k}
    \end{align}
for all $\bV \in \mathbb{C}^n$.
\end{lemma}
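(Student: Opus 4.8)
The plan is to reduce everything to the purely algebraic perturbation identity \eqref{eq:pert}, namely
\[
B_\abs^{-1}A \;=\; B_\abs^{-1}A_\abs \;-\; B_\abs^{-1}A_\abs\bigl(I - A_\abs^{-1}A\bigr),
\]
which is valid verbatim here (it uses only invertibility of $A_\abs$, implicit in the hypotheses). The three factors on the right are all controlled: $\Vert B_\abs^{-1}A_\abs\Vert\le C_1(\abs)$ and the field-of-values lower bound $C_2(\abs)$ are exactly \eqref{eq:A2}, while $\Vert I - A_\abs^{-1}A\Vert\le K|\abs|/k$ is the assumed form of \eqref{eq:GGS}. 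Everything then follows by the triangle inequality, submultiplicativity of the operator norm, and the Cauchy--Schwarz inequality.

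For the first inequality in \eqref{eq:A3} I would simply take operator norms in the identity above:
\[
\Vert B_\abs^{-1}A\Vert \;\le\; \Vert B_\abs^{-1}A_\abs\Vert + \Vert B_\abs^{-1}A_\abs\Vert\,\Vert I - A_\abs^{-1}A\Vert \;\le\; C_1(\abs)\Bigl(1 + K\tfrac{|\abs|}{k}\Bigr).
\]

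For the second inequality, fix $\bV\in\mathbb{C}^n$ and pair the identity with $\bV$:
\[
(\bV, B_\abs^{-1}A\bV) \;=\; (\bV, B_\abs^{-1}A_\abs\bV) \;-\; \bigl(\bV,\, B_\abs^{-1}A_\abs(I - A_\abs^{-1}A)\bV\bigr).
\]
By the reverse triangle inequality applied to the moduli, together with \eqref{eq:A2} for the first term and Cauchy--Schwarz plus submultiplicativity for the second term,
\[
\bigl|(\bV, B_\abs^{-1}A\bV)\bigr| \;\ge\; C_2(\abs)\Vert\bV\Vert^2 \;-\; \Vert B_\abs^{-1}A_\abs\Vert\,\Vert I - A_\abs^{-1}A\Vert\,\Vert\bV\Vert^2 \;\ge\; \Bigl(C_2(\abs) - K\,C_1(\abs)\tfrac{|\abs|}{k}\Bigr)\Vert\bV\Vert^2,
\]
and dividing by $\Vert\bV\Vert^2$ gives the claim.

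Since each estimate is one line, there is no genuine obstacle; the only points needing (minor) attention are that the field-of-values bound concerns the \emph{modulus} of a sesquilinear form, so one must use $|z-w|\ge|z|-|w|$ rather than subtracting real parts, and that $C_1(\abs),C_2(\abs)$ are allowed to depend on $\abs$ but on no other parameters, so the conclusion \eqref{eq:A3} inherits exactly this structure. The substance is simply that the $\mathcal O(|\abs|/k)$ gap between $A$ and $A_\abs$ is a small relative perturbation once $|\abs|$ is a sufficiently small multiple of $k$.
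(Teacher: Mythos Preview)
Your proof is correct and follows essentially the same route as the paper: both use the perturbation identity \eqref{eq:pert}, bound the norm via the triangle inequality and submultiplicativity, and bound the field of values via the reverse triangle inequality combined with Cauchy--Schwarz on the perturbation term. The paper's write-up is terser but the ingredients are identical.
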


\bre
Observe that for the norm in \eqref{eq:A3} to remain bounded we simply need $C_1(\abs)$ to be bounded, while for the field of values to be bounded away from the origin we need the stronger condition
  $${C_2(\abs)} \ > \  K C_1(\abs) \frac{|\abs|} {k}.$$
\ere

\begin{proof}[Proof of Lemma \ref{lem:A1}]
  The first bound in \eqref{eq:A3} follows from  \eqref{eq:pert}, \eqref{eq:GGS}, and the first equation in \eqref{eq:A2}.  To obtain the second bound in \eqref{eq:A3}, we use \eqref{eq:pert}, the first bound in \eqref{eq:A3},
  and the inverse triangle inequality to obtain
\beqs
  \big\vert ( \bV , B_\abs^{-1} A \bV ) \big\vert\  \geq \ \big\vert ( \bV , B_\abs^{-1} A_\abs \bV ) \big\vert - K C_1(\abs)  \frac{|\abs|}{k} \Vert \bV\Vert^2 ,
\eeqs
  and we then use the second equation in \eqref{eq:A2}. 
  \end{proof}

\end{appendix}

\bibliographystyle{siam}
\bibliography{biblio_epsilon3}

\end{document}